\definecolor{darkgreen}{rgb}{0,0.5,0}
\definecolor{darkblue}{rgb}{0,0,0.5}
\newcommand{\relmiddle}[1]{\mathrel{}\middle#1\mathrel{}}
\renewcommand{\bar}{\overline}
\renewcommand{\epsilon}{\varepsilon}
\newcommand{\eps}{\epsilon}
\renewcommand{\phi}{\varphi}
\newcommand{\MOD}{\mathrm{mod}}
\newcommand{\abs}[1]{\lvert #1 \rvert}
\newcommand{\frakg}{\mathfrak{g}}
\newcommand{\fraksl}{\mathfrak{sl}}
\newcommand{\bbA}{\mathbb{A}}
\newcommand{\bbC}{\mathbb{C}}
\newcommand{\bbZ}{\mathbb{Z}}
\newcommand{\bbN}{\mathbb{N}}
\newcommand{\bbQ}{\mathbb{Q}}
\newcommand{\bbR}{\mathbb{R}}
\newcommand{\bbP}{\mathbb{P}}
\newcommand{\inv}{\varpi}
\newcommand{\Vol}{\mathrm{Vol}}
\newcommand{\calA}{\mathcal{A}}
\newcommand{\calD}{\mathcal{D}}
\newcommand{\calG}{\mathcal{G}}
\newcommand{\calH}{\mathcal{H}}
\newcommand{\calM}{\mathcal{M}}
\newcommand{\frakt}{\mathfrak{t}}
\newcommand{\su}{\mathfrak{su}}
\newcommand{\SU}{\mathrm{SU}}
\newcommand{\SL}{\mathrm{SL}}
\newcommand{\PSL}{\mathrm{PSL}}
\newcommand{\End}{\text{End}}
\newcommand{\isom}{\cong}
\newcommand{\dd}{\partial}
\DeclareMathOperator{\sgn}{sgn}
\DeclareMathOperator{\diag}{diag}
\DeclareMathOperator{\U}{U}
\DeclareMathOperator{\Id}{Id}
\DeclareMathOperator{\Lie}{Lie}
\DeclareMathOperator{\Tr}{Tr}
\DeclareMathOperator{\tr}{tr}
\DeclareMathOperator{\hol}{hol}
\DeclareMathOperator{\CS}{CS}
\DeclareMathOperator{\vol}{vol}
\DeclareMathOperator{\Ad}{Ad}
\def\XXint#1#2#3{{\setbox0=\hbox{$#1{#2#3}{\int}$}
\vcenter{\hbox{$#2#3$}}\kern-.5\wd0}}
\newcommand{\id}{\mathrm{id}}
\newcommand{\Aut}{\mathrm{Aut}}
\newcommand{\Hom}{\mathrm{Hom}}
\newcommand{\MCG}{\mathrm{MCG}}
\newcommand{\Span}{\mathrm{span}}
\newcommand{\LHS}{\mathrm{LHS}}
\newcommand{\RHS}{\mathrm{RHS}}
\newtheorem{thm}{Theorem}[subsection]
\newtheorem{prop}[thm]{Proposition}
\newtheorem{cor}[thm]{Corollary}
\newtheorem{conjec}[thm]{Conjecture}
\newtheorem{lem}[thm]{Lemma}
\theoremstyle{remark}
\newtheorem{rem}[thm]{Remark}
\theoremstyle{definition}
\newtheorem{example}[thm]{Example}
\begin{document}
\title{On the Witten--Reshetikhin--Turaev invariants of torus bundles}
\author{Jørgen Ellegaard Andersen}
\address{Centre for Quantum Geometry of Moduli Spaces\\ Faculty of Science\\
  Aarhus University\\
  DK-8000, Denmark}
  \email{andersen@qgm.au.dk}
\author{Søren Fuglede Jørgensen}
  \address{Department of Mathematics\\
  Box 480\\
  Uppsala University\\
  SE-75106 Uppsala, Sweden}
  \email{soren.fuglede.jorgensen@math.uu.se}
\thanks{Supported in part by the center of excellence grant ``Center for Quantum Geometry of Moduli Space'' from the Danish National Research Foundation.}% and in part by the Swedish Research Council Grant 621--2011--3629.}

\begin{abstract}
  By methods similar to those used by Lisa Jeffrey \cite{Jef}, we compute the quantum $\SU(N)$-invariants for mapping tori of trace $2$ homeomorphisms of a genus $1$ surface when $N = 2,3$ and discuss their asymptotics. In particular, we obtain directly a proof of a version of Witten's asymptotic expansion conjecture for these $3$-manifolds. We further prove the growth rate conjecture for these $3$-manifolds in the $\SU(2)$ case, where we also allow the $3$-manifolds to contain certain knots. In this case we also discuss trace $-2$ homeomorphisms, obtaining -- in combination with Jeffrey's results -- a proof of the asymptotic expansion conjecture for all torus bundles.
\end{abstract}
\maketitle
\tableofcontents

\section{Introduction}
\subsection{The conjectures}
Assume in the following that the gauge group is $G =  \SU(N)$ (even if most of what follows makes sense for more general groups), and let $M$ be a closed oriented 3-manifold containing a framed link $L$. Let $P \to M$ be a principal $G$-bundle, let $\calA_P$ denote the space of all connections in $P$, and let $\calG_P$ denote the space of gauge transformations of $P$ acting on $\calA_P$. Recall that the Chern--Simons action defines a map
\begin{align*}
	\CS : \calA_P/\calG_P \to \bbR/\bbZ,
\end{align*}
given explicitly by
\begin{align*}
	\CS([A]) = \frac{1}{8\pi^2} \int_M \tr(dA \wedge A + \tfrac{2}{3} A \wedge A \wedge A).
\end{align*}
Let now $k \in \bbZ_{\geq 0}$ be a \emph{level} and choose for every component $L_i$ of $L$ a finite-dimensional representation $R_i$ of $G$, referred to in the following as a \emph{colouring}. Let $\hol_A(L_i) \in G$ denote the holonomy of a connection $A$ about a component $L_i$ of $L$. We then define the (physical) Chern--Simons partition function
\begin{align*}
	Z^\mathrm{phys}_{k,G}(M,L) = \int_{\calA_P/\calG_P} \prod_i \tr(R_i(\hol_A(L_i))) \exp(2\pi i k \CS(A))\calD A.
\end{align*}
Witten argued in \cite{WitJones} that this expression defines a topological quantum field theory. Much of quantum topology is inspired by the interpretation of the above integral, which a priori makes no sense from a mathematical point of view as the measure $\calD A$ is not defined. Attemps to make out of this a rigorous definition quickly arose, using the representation theory of quantum groups in \cite{RT2} and \cite{TW} (see also \cite{Tu} for a more modern account on these invariants); in this paper we stick primarily to the setup of skein theory from \cite{Bla}, as will be described in Section~\ref{quantuminvariants}. In particular the authors defined topological invariants $Z_k^G(M,L)$ -- which we will refer to as the level $k$ quantum $G$-invariants of $(M,L)$ -- defined to behave under surgery the way Witten had shown $Z^\mathrm{phys}_{k,G}$ would. The main conjecture that we seek out to discuss is that $Z_{k,G}^\mathrm{phys} = Z_k^G$. Of course, such an equality is hard to check when only one side is defined. However, by applying the method of stationary phase approximation to the left hand side, one arrives at expressions that may be directly checked for the right hand side.

Since any principal $G$-bundle over $M$ is trivializable, we refer to the moduli space $\calM(M)$ of flat $G$-bundles on $M$ simply as the moduli space of flat $G$-connections on $M$, without mention of a particular bundle. The moduli space $\calM(M)$ is compact, and the Chern--Simons action is constant on components of $\calM(M)$. We can therefore formulate the following conjecture, which first appeared in \cite{Oht}.

\begin{conjec}[Asymptotic expansion conjecture (AEC)]
  \label{asympconjec}
   Let $M$ be a closed oriented 3-manifold. Let $r = k+N$. Let $\{c_0 = 0,\dots,c_m\}$ be the finitely many values of the Chern--Simons action on the moduli space $\calM(M)$. Then there exist $d_j \in \tfrac{1}{2}\bbZ$, $b_j \in \bbC$, and $a_j^l \in \bbC$ for $j = 0, \dots, m$, $l = 1, 2, \dots$ such that
\begin{align*}
  Z_k(M) \sim_{k \to \infty} \sum_{j=0}^m \exp(2\pi i rc_j)r^{d_j}b_j\left(1+\sum_{l=1}^\infty a_j^l r^{-l/2}\right)
\end{align*}
in the sense that
\begin{align*}
  \left\lvert Z_k(M) - \sum_{j=0}^m \exp(2\pi i rc_j)r^{d_j}b_j\left(1+\sum_{l=1}^L a_j^l r^{-l/2}\right) \right\rvert = O(r^{d-(L+1)/2}),
\end{align*}
for $L= 0, 1, \dots$, where $d = \max_j d_j$.
\end{conjec}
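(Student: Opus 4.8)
Since the Asymptotic Expansion Conjecture is open in general, the plan is to establish it for the 3-manifolds treated in this paper: the mapping tori $M_\phi$ of trace $\pm 2$ homeomorphisms $\phi$ of the genus-one surface, in the $\SU(N)$ theory with $N = 2, 3$. As in Jeffrey's analysis \cite{Jef} of the hyperbolic ($|\tr\phi| > 2$) torus bundles, the idea is to reduce $Z_k(M_\phi)$ to an explicit finite exponential sum and then extract its $k \to \infty$ behaviour by hand.

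\emph{Step 1: a closed formula.} The homeomorphism type of $M_\phi$ depends only on the conjugacy class of $\phi$ in $\SL(2,\bbZ)$, and every trace $\pm 2$ element is conjugate to $\pm T^{b}$ with $T = \left(\begin{smallmatrix} 1 & 1 \\ 0 & 1\end{smallmatrix}\right)$ and $b \in \bbZ$, so it suffices to treat $\phi = \pm T^{b}$. Using the surgery/TQFT description of Section~\ref{quantuminvariants}, $Z_k(M_\phi)$ equals, up to an explicit normalisation and framing-correction phase, the trace over the genus-one TQFT space $V_k(T^2)$ of the operator by which $\phi$ acts; this space has a basis indexed by the level-$k$ weights of $\SU(N)$, on which $\SL(2,\bbZ)$ acts through the Reshetikhin--Turaev matrices $S$ (a Weyl-antisymmetrised finite Fourier transform over the weight lattice modulo $r$) and $T$ (diagonal, with entries $\exp(2\pi i(\langle\lambda,\lambda+2\rho\rangle/(2r) - c/24))$), and a coloured knot inside $M_\phi$ is accommodated by inserting the corresponding column of $S$. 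Since $-I = S^{2}$ acts as charge conjugation, one obtains in both cases a finite sum over the level-$k$ Weyl alcove (over its self-conjugate weights when $\tr\phi = -2$) whose summand, up to a $\lambda$-independent phase, is $\exp(2\pi i Q(\lambda)/r)$ for a fixed rational quadratic form $Q$ in the weight $\lambda$ --- a generalised Gauss sum, one-dimensional for $N = 2$ and two-dimensional for $N = 3$. Carrying out this reduction, framing anomaly included, is the first concrete task.

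\emph{Step 2: asymptotics of the sum.} What distinguishes the trace $\pm 2$ case from the hyperbolic one is that $M_\phi$ is a Nil-manifold (or, for $\phi = \pm I$, a flat manifold), so $\calM(M_\phi)$ has positive-dimensional components --- flat connections pulled back from abelian connections on $T^2$ --- and correspondingly the form $Q$ is degenerate. I would diagonalise $Q$ over $\bbQ$ and split the weight variables into a subspace along which $Q$ is essentially constant and a complement along which it is nondegenerate. On the nondegenerate directions one applies quadratic Gauss-sum reciprocity (the Hardy--Littlewood evaluation) to get an exact value carrying a definite phase and a half-integral power of $r$; on the flat directions one replaces the sum by an integral via the Euler--Maclaurin (or Poisson) summation formula, contributing a further power of $r$ equal to half the dimension of the relevant component of $\calM(M_\phi)$ together with a full asymptotic series in $r^{-1/2}$. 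Summing over residue classes and stationary components then produces an expansion of exactly the asserted shape, the $c_j$ emerging as the values of $Q/r$ modulo $1$ at the stationary points.

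\emph{Step 3: geometric identification and the main obstacle.} It remains to match the data of the expansion with the invariants the conjecture refers to: identify the phases $\exp(2\pi i r c_j)$ with the Chern--Simons values of the components of $\calM(M_\phi)$, computed directly from the flat connections of a torus bundle, confirm that $\{c_j\}$ is exactly this set, and --- in the $\SU(2)$ case, including the versions with a coloured knot --- check that $d = \max_j d_j$ and the leading coefficients $b_j$ agree with the growth rate conjecture, that is with the dimensions of the components of the character variety and with the relevant Reidemeister torsions and spectral-flow corrections; together with Jeffrey's computations \cite{Jef} for $|\tr\phi| \neq 2$ this yields the AEC for all $\SU(2)$ torus bundles. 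The main obstacle is Step 2: making the degenerate Gauss-sum analysis uniform and controlling the error terms to all orders, in particular the contributions from the boundary of the alcove and from the Weyl-fixed weights, which is precisely where the power $r^{d_j}$ and the constants $b_j$ (and hence the non-vanishing underlying the growth rate statement) are pinned down; for $N = 3$ this becomes a genuinely two-dimensional lattice-point problem and is the technical heart of the argument.
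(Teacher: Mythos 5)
A preliminary caveat: the statement is a conjecture, and the paper does not (and could not) prove it in general; the relevant comparison is with the paper's proofs of the AEC for the families it treats, namely Corollaries~\ref{su2cs} and~\ref{sunasympconjec} for $M^b$ and $\tilde M^b$. Your overall plan --- reduce $Z_k$ to a finite exponential sum over the alcove via the diagonal $T$-matrix (restricting to self-conjugate weights for trace $-2$), apply Gauss reciprocity, and match the resulting phases with Chern--Simons values of the components of $\calM$ --- is indeed the paper's strategy. But your Step~2, which you yourself flag as the technical heart, rests on a wrong diagnosis and would not go through as written. You assert that the quadratic form $Q$ is degenerate because $M_\phi$ is a Nil-manifold, and propose splitting the weight variables into flat directions (to be handled by Euler--Maclaurin or Poisson summation) and nondegenerate ones (handled by reciprocity). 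The degeneracy you have in mind is real in Jeffrey's two-variable framework --- the map $wU-1$ on $\frakt\oplus\frakt$ is singular exactly when $\tr U=\pm2$, which is why her Theorem~5.11 cannot be quoted here --- but the paper never meets that form. Since $\rho_k(t_\mu^b)$ is diagonal in the $\hat y_\lambda$ basis, the trace is directly the $(N-1)$-dimensional Gauss sum $\sum_{\lambda\in\tilde P_r}\exp(\pi i b\langle\lambda,\lambda\rangle/r)$, whose form $b\langle\cdot,\cdot\rangle$ is nondegenerate for every $b\neq0$. A single application of Theorem~\ref{quadrecisun}, after tiling a cube of side $2Nr$ by alcoves and counting the multiplicities with which the lower-dimensional faces are covered (this bookkeeping, not degeneracy of $Q$, is what distinguishes the trace-$\pm2$ case from the hyperbolic one), yields an \emph{exact} finite expression; the ``asymptotic expansion'' is then literally the Taylor series of the explicit prefactors $\exp(\pi i b/(2r))$ and powers of $\sqrt r$. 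There is no Euler--Maclaurin step and there are no error terms to control to all orders. The genuinely degenerate case is $b=0$, which is dispatched by a dimension count (Proposition~\ref{3torusprop}), not by stationary phase.

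Your heuristic for the growth rate is also off: ``half the dimension of the relevant component of $\calM(M_\phi)$'' would predict $d_j=1$ for the $2$-dimensional pillowcase and torus components, whereas the actual leading growth is $r^{1/2}$ for $N=2$; the correct count, per Conjecture~\ref{growthrateconjec} and Theorem~\ref{growthratesu2}, is $\tfrac12(h^1_A-h^0_A)=\tfrac12(2-1)$, the reducibility of the generic connection ($h^0_A=1$) being precisely what lowers the exponent. In the paper the power of $r$ is instead read off mechanically from reciprocity: the dual Gauss sum runs over a modulus $|b|$ fixed independently of $r$, so the prefactor $\sqrt{2r/|b|}$ (resp.\ the factor $r$ in the two-dimensional $\SU(3)$ sum) survives, in contrast with the hyperbolic case where the modulus is $|\tr U\mp2|$. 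The geometric identification in your Step~3 is then a separate computation (Propositions~\ref{su2modulirum} and~\ref{suncs}). With Step~2 repaired along these lines, your outline does become the paper's argument.
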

\begin{rem}
  \label{framings}
  In all of the previous, we have silently ignored an important feature of quantum invariants: they actually depend on a choice of \emph{$2$-framing} on $M$ (see \cite{AtiFraming}). Such $2$-framings are in bijective correspondence with the integers, and it is well-known that given two $2$-framings on $M$, the corresponding quantum $G$-invariants differ by a factor of $\exp(2\pi i c/24)$ to some power, where here $c$ is the \emph{central charge} of $G$; we elaborate on this in Remark~\ref{framingrem}. In particular, the asymptotic significance of this is simply a root of unity (e.g. $\exp(\pi i/4)$ for the case $G = \SU(2)$), which is irrelevant to the conjecture as stated above. For this reason, we will largely ignore this issue.
\end{rem}
\begin{rem}
  \label{asympexpanunique}
  Taking into account the issue of $2$-framings, if an asymptotic expansion as the above exists, it is well-known that the constants $c_j$, $d_j$, $b_j$, and $a_j^l$ are uniquely determined -- see \cite{Andfiniteorder} and \cite[Theorem~8.2]{Han}. In particular they are topological invariants of the $3$-manifold. Moreover, the $c_j$ and $d_j$ are independent of the $2$-framing.
\end{rem}
Following Witten's original calculation \cite[Sect.~2]{WitJones} for the case when $\calM(M)$ is finite, and subsequent generalizations in \cite[(1.36)]{FG}, \cite[(5.1)]{Jef}, conjecturally, the constants appearing in this Conjecture~\ref{asympconjec} have various topological interpretations in terms of e.g. Reidemeister torsion and spectral flow (see also \cite{Oht}, \cite{AH}). We will be particularly interested in the behaviour of the $d_j$.

For a flat connection $A$ in $P \to M$, we obtain a complex 
\begin{align*}
  \cdots \stackrel{d_A}{\rightarrow} \Omega^{k-1}(M,\Ad_P) \stackrel{d_A}{\rightarrow} \Omega^k(M,\Ad_P) \stackrel{d_A}{\rightarrow} \Omega^{k+1}(M,\Ad_P) \stackrel{d_A}{\rightarrow} \cdots.
\end{align*}
Let $H^i(M,\Ad_P)$ denote the cohomology of this complex, and let
\begin{align*}
  h^i_A = \dim H^i(M,\Ad_P).
\end{align*}

In the case where Conjecture~\ref{asympconjec} holds true for a $3$-manifold $M$, we conjecture the following (see \cite{Oht} for more details).
\begin{conjec}[The growth rate conjecture]
  \label{growthrateconjec}
  Let $c_j, d_j$ as in Conjecture~\ref{asympconjec} and let $\calM_j$ denote the subspace of $\calM$ corresponding to Chern--Simons action $c_j$. Then
  \begin{align*}
    d_j = \frac{1}{2} \max_{[A] \in \calM_j} (h^1_A - h^0_A),
  \end{align*}
  where the $\max$ denotes the maximum over all Zariski open subsets of $\calM_j$ with the property that $h^1_A - h^0_A$ is constant on the subset.
\end{conjec}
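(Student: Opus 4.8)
The plan is to prove the conjecture for the specific $3$-manifolds at hand by computing both sides of the asserted identity explicitly and comparing them; the fully general conjecture is not attempted, so everything below is special to a mapping torus $M = M_\phi$ of the torus $T^2$ with monodromy $\phi \in \SL(2,\bbZ)$ satisfying $\tr\phi = \pm 2$, and to $G = \SU(2)$ (the AEC part, Conjecture~\ref{asympconjec}, will also go through for $\SU(3)$). On the quantum side one first records a surgery presentation of $M_\phi$ in $S^3$: writing $\phi$ as a word in the standard generators of $\SL(2,\bbZ)$ produces a chain of framed unknots, and closing it up to realize the mapping torus gives a framed link to which the skein-theoretic surgery formula of Section~\ref{quantuminvariants} applies. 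Feeding in the explicit $\SU(2)$ $S$- and $T$-matrices turns $Z_k(M_\phi)$ into a finite one-dimensional trigonometric sum over the level-$k$ Weyl alcove, of the form $\sum_n f(n/r)\exp\!\big(2\pi i r\,Q(n/r)\big)$ with $Q$ an explicit quadratic (its exact shape reflecting that $\tr\phi = \pm2$) and $f$ a smooth weight vanishing at the walls; this is precisely the class of sums analysed by Jeffrey in \cite{Jef}.

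Second, I would extract the large-$r$ asymptotics of this sum by Poisson summation followed by stationary phase, supplemented by an Euler--Maclaurin analysis of the contributions near the walls of the alcove. The critical set of $Q$ --- which, because $\phi$ is parabolic, is positive-dimensional rather than a finite set of nondegenerate points --- together with the boundary produces terms of the shape $\exp(2\pi i r c_j)\,r^{d_j}(b_j + O(r^{-1/2}))$, where each $c_j$ is the common value of $Q$ on a component of the critical set and $d_j$ is determined by the dimension of that component, the corank of the Hessian transverse to it, and the interaction with the walls. Tracking these data carefully yields the full list of pairs $(c_j,d_j)$ and in particular proves the AEC for $M_\phi$ directly; it also identifies each $c_j$ with a value of the Chern--Simons invariant via the surgery presentation.

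Third, and independently, I would analyse the moduli space $\calM(M_\phi)$ of flat $\SU(2)$-connections. Since $\pi_1(M_\phi) = \bbZ^2 \rtimes_\phi \bbZ$ with $\bbZ^2$ abelian, the restriction of a representation to $\bbZ^2$ lies (up to conjugation) in a maximal torus, and the condition that the $\bbZ$-generator conjugate this representation to its $\phi$-twist, together with $\tr\phi = \pm2$, cuts the possibilities down to a short explicit list of strata (central representations, a circle of further abelian ones, and in the appropriate cases isolated irreducibles), on each of which $\CS$ is constant. On the smooth constant-rank part of each stratum I would compute $h^0_A = \dim H^0(M_\phi,\Ad_P)$ as the dimension of the stabilizer and $h^1_A = \dim H^1(M_\phi,\Ad_P)$ from the Wang exact sequence of the mapping torus, i.e.\ in terms of the kernel and cokernel of $\Ad(\phi) - 1$ acting on the $\su(2)$-valued cohomology of $T^2$. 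This makes $\tfrac12\max_{[A]\in\calM_j}(h^1_A - h^0_A)$ completely explicit, after which the final step is the check that this number equals the $d_j$ obtained from stationary phase, component by component.

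I expect the main obstacle to be the second step. The critical submanifolds of $Q$ are degenerate, and --- a genuine complication rather than a cosmetic one --- they may sit on, or asymptotically close to, the walls of the Weyl alcove, so that neither a plain stationary-phase expansion nor a plain Euler--Maclaurin boundary expansion is individually valid; one has to splice the two, keep precise control of the half-integer powers of $r$ coming from the Hessian corank, and rule out cancellations that would lower the leading exponent. A secondary but non-trivial difficulty is matching normalizations on the two sides --- the $2$-framing correction of Remark~\ref{framingrem} and the identification of the $c_j$ --- so that the comparison in the last step is a literal equality and not merely an equality up to an unidentified root of unity or an additive constant in the Chern--Simons values.
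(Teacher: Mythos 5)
Your overall architecture --- compute the invariant explicitly on the quantum side, describe $\calM(M_\phi)$ and the twisted cohomology on the classical side, and compare component by component --- is exactly the paper's, and your scoping of the conjecture to the trace-$\pm2$ torus bundles with $G=\SU(2)$ matches what is actually proved (Theorem~\ref{growthratesu2}, resting on Proposition~\ref{su2modulirum} and Corollaries~\ref{stnudenlinks}, \ref{su2cs}). The one genuinely different choice is your second step. The paper does not run Poisson summation, stationary phase, or Euler--Maclaurin at all: since the monodromy is conjugate to $\pm t_\mu^b$, the invariant is a single one-dimensional Gauss sum $\sum_n a^{b(n^2+2n)}(\cdots)$, and Jeffrey's reciprocity formula (Theorems~\ref{quadrecisun} and \ref{quadreci}) converts it \emph{exactly} into a finite sum over $n=0,\dots,\lvert b\rvert-1$ plus explicit boundary corrections; the asymptotic expansion is therefore exact and finite, and all of the difficulties you flag --- degenerate critical submanifolds sitting on the alcove walls, splicing two expansions, ruling out cancellations --- simply do not arise. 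Your route is the one taken by Rozansky and Hansen for general Seifert manifolds and is workable, but it is substantially harder and you have (correctly) identified it as the weak point of your plan; the reciprocity identity is the device that makes this family tractable. The only residual subtlety in the paper's version of your step two is the bookkeeping needed to fold the summation range from $2r$ down to $r$ and to absorb the two boundary terms $-\tfrac12$ and $-\tfrac12\exp(-\pi i rb/2)$ into the correct Chern--Simons phases ($c=0$ and $c=\lvert b\rvert/4$ or $1-b/4$). On the classical side, your use of the Wang exact sequence to get $h^1_A$ from $\ker$ and $\coker$ of $\Ad(\phi)-1$ on $H^*(T^2,\su(2)_\rho)$ is a clean alternative to the paper's explicit group-cohomology computation with the three relators and $9\times 9$ matrices; both yield $h^1_A-h^0_A=1$ generically on the pillowcase and torus components and $h^1_A-h^0_A=0$ at the isolated irreducible point, matching $d_j=\tfrac12$ and $d_j=0$. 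Your concern about framing normalizations is legitimate but harmless here, since by Remark~\ref{framings} a change of $2$-framing only multiplies the invariant by a root of unity and does not affect the $c_j$ or $d_j$.
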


\subsection{Known results on the AEC}
Papers on these various conjectures are plenty, and we try here to give an overview of what is known on the subject but apologize in advance to any authors that might have been left out. Jeffrey, \cite{Jef}, proved the AEC for all lens spaces as well as for mapping tori of Anosov torus homeomorphisms on a torus in the case $G = \SU(2)$ and for a certain subfamily of these mapping tori for general $G$. Jeffrey's result for Anosov torus homeomorphisms has been extended to general $G$ in \cite{Chaasymp}. Andersen, \cite{Andfiniteorder}, proved the AEC for mapping tori of finite order diffeomorphisms of surfaces of genus $g \geq 2$ in the case $G = \SU(N)$, and in \cite{AH} the authors expanded on this result, identifying the leading order terms of the asymptotic expansion in terms of classical topological-geometric invariants. The case of lens spaces for $G = \SU(N)$ is discussed in \cite{HT} (see the comment preceding \cite[Conj.~4.3]{HT}). Hikami, \cite{Hik}, also considered the case of Brieskorn homology spheres for $G = \SU(2)$ in greater detail. The paper \cite{KSV} suggests numerical evidence \emph{against} the conjectures for the $3$-manifolds $S^3(4_1(-n/1))$, $n = 7, 16, 22$, demonstrating a contribution from a non-Chern--Simons-value phase. The asymptotics of quantum invariants of surgeries on the figure-eight knot have also been analyzed by Andersen and Hansen in \cite{AndHan}.

Rozansky, \cite{Rozformulas}, discusses the AEC for general Seifert manifolds for $G = \SU(2)$. The methods used in that paper are somewhat heuristic and some error estimates have been left out. These are supplied in Hansen's PhD thesis \cite{Han} and collected in \cite{Han1} and the preprint \cite{Han2}, proving the AEC for all Seifert manifolds in the case $G = \SU(2)$. Our present analysis should be viewed in this context, as the manifolds $M^b$ and $\tilde{M}^b$ we will be considering are Seifert manifolds of symbol $\{b;(o_1,1)\}$ and $\{(b;(n_2,2)\}$ for $b \in \bbZ$ (see e.g. \cite[pp.~124--125]{Orl}). The Thurston geometries of the manifolds are Euclidean when $b = 0$, and they have nil geometry for $n \not= 0$.

As far as preprints go, the results are as follows: in \cite{CMII}, Charles and Marché prove the $\SU(2)$-AEC for the manifolds $S^3(4_1(p/q))$ with $p$ not divisible by $4$ and $H^1(M,\mathrm{Ad}_\rho) = 0$ for all irreducible flat connections $\rho : \pi_1(M) \to \SU(2)$. Using similar techniques, Charles \cite{Cha} proves the AEC for $S^3(T_{a,b}(p/q))$, whenever $p/q \not= 2ab l/m$ for all $l,m$ with $a$ and $b$ not dividing $m$, also in the $\SU(2)$ case. Here $T_{a,b}$ is the $(a,b)$-torus knot. Charles extends his results to other mapping tori in \cite{Chaasympmcg} under certain regularity conditions. 

In \cite{Agenerel} Andersen has recently given a geometric formula for the leading order asymptotics of quantum invariants of any three manifold and the asymptotic analysis of this formula will certainly lead to considerable progress on the AEC.

\subsection{Results of this paper}
The paper is organized as follows: In Section~\ref{quantuminvariants}, we recall the definition of the quantum $\SU(N)$-representation and the corresponding quantum $\SU(N)$-invariants of mapping tori following \cite{Bla}. In particular we discuss what happens when the underlying surface is a torus and the mapping torus has monodromy an element of $\SL(2,\bbZ)$ with trace $\pm 2$. Notice that all of these manifolds are Seifert manifolds; for trace $2$ diffeomorphisms they have symbol $\{b;(o_1,1)\}$ for $b \in \bbZ$ and for trace $-2$ they have symbol $\{ b;(n_2,2)\}$ (see \cite{Orl}).

In Section~\ref{su2afsnit} we consider the case $G = \SU(2)$ in detail. We give a formula (Corollary~\ref{stnudenlinks}) for the quantum invariants of the mapping tori mentioned above and find the Chern--Simons values of the corresponding moduli space, proving Conjecture~\ref{asympconjec} for this family of $3$-manifolds (Theorem~\ref{su2cs}). As it poses no extra difficulty, we also allow the $3$-manifolds to be endowed with a link along a regular Seifert fiber and consider the dependence of the quantum invariant on the colouring of the link. We describe the moduli space of the $3$-manifold explicitly (Proposition~\ref{su2modulirum}) and are able to calculate the dimensions $h^i_A$ of the twisted cohomology groups, proving Conjecture~\ref{growthrateconjec} for these manifolds (Theorem~\ref{growthratesu2}).

In Section~\ref{sunafsnit} we turn to the case $G = \SU(N)$, giving a similar expression for the quantum invariants for $N = 3$ (Theorem~\ref{invariantssun}), this time for the bare $3$-manifolds containing no links. Whereas we give no explicit description of the moduli space in this general case, we find the Chern--Simons values for general $N$ (Proposition~\ref{suncs}) and prove Conjecture~\ref{asympconjec} for $N = 3$ (Corollary~\ref{sunasympconjec}).

Finally, in Section~\ref{pseudoafsnit} we discuss the problem (see \cite{AMU}) of using quantum representations to determine stretch factors of pseudo-Anosov homeomorphisms.

The main technical tool used in this paper is a generalization of the quadratic reciprocity law of Gauss sums by Jeffrey, \cite{Jef}. As mentioned above, Jeffrey addresses the conjecture for the torus mapping tori whose monodromy have trace different from $\pm 2$, thereby avoiding particular technical issues that will be apparent in our proofs. Combining her results with ours, we therefore end up with a description of the quantum invariants of all mapping tori over a torus, i.e. all torus bundles, at least in the case $G = \SU(2)$ (Theorem~\ref{summarythm}).

Note also that some of the results of this paper are written up in the second author's (unpublished) theses \cite{Joer}, \cite{JoerThesis} -- written under supervision of the first author -- which contain a somewhat more thorough introduction to many of the concepts discussed here.

\textbf{Acknowledgements.} The second author would like to thank his office mates at Aarhus Universitet for coping with innumerable discussions about sign conventions.

\section{Review of quantum invariants}
\subsection{Quantum invariants and quantum representations}
\label{quantuminvariants}
The $\SU(N)$-quantum invariants of mapping tori can be described using only the $2$-dimensional part of the TQFT, which we briefly recall in this section; if not to make the paper self-contained then at least to make it accessible to anyone familiar with \cite{Bla}.

Let $k \in \bbZ_{\geq 0}$ be a \emph{level}. The main objects in the theory are certain vector spaces $V_k(\Sigma)$ associated to an oriented closed surface $\Sigma$ as well as projective representations $\rho_k : \mathrm{MCG}(\Sigma) \to \bbP \Aut(V_k(\Sigma))$ of the mapping class group of $\Sigma$. These so-called \emph{quantum representations} have been described in a plethora of different setups including the representation theory of quantum groups, more general modular categories (see e.g. \cite{Tu}), $2$-dimensional conformal field theory (see e.g. \cite{TUY}), and geometric quantization (see e.g. \cite{asympgeom}). We stick to the framework of Homflypt skein theory as described in \cite{Bla} and collect here the parts of it that are most important for our present study. That this theory corresponds to the one arising from conformal field theory (and therefore to the one of geometric quantization by the result of \cite{Las}), is shown in the series of papers \cite{AU1}, \cite{AU2}, \cite{AU3}, and the preprint \cite{AU4}.

\begin{figure}
  \centering
  \includegraphics{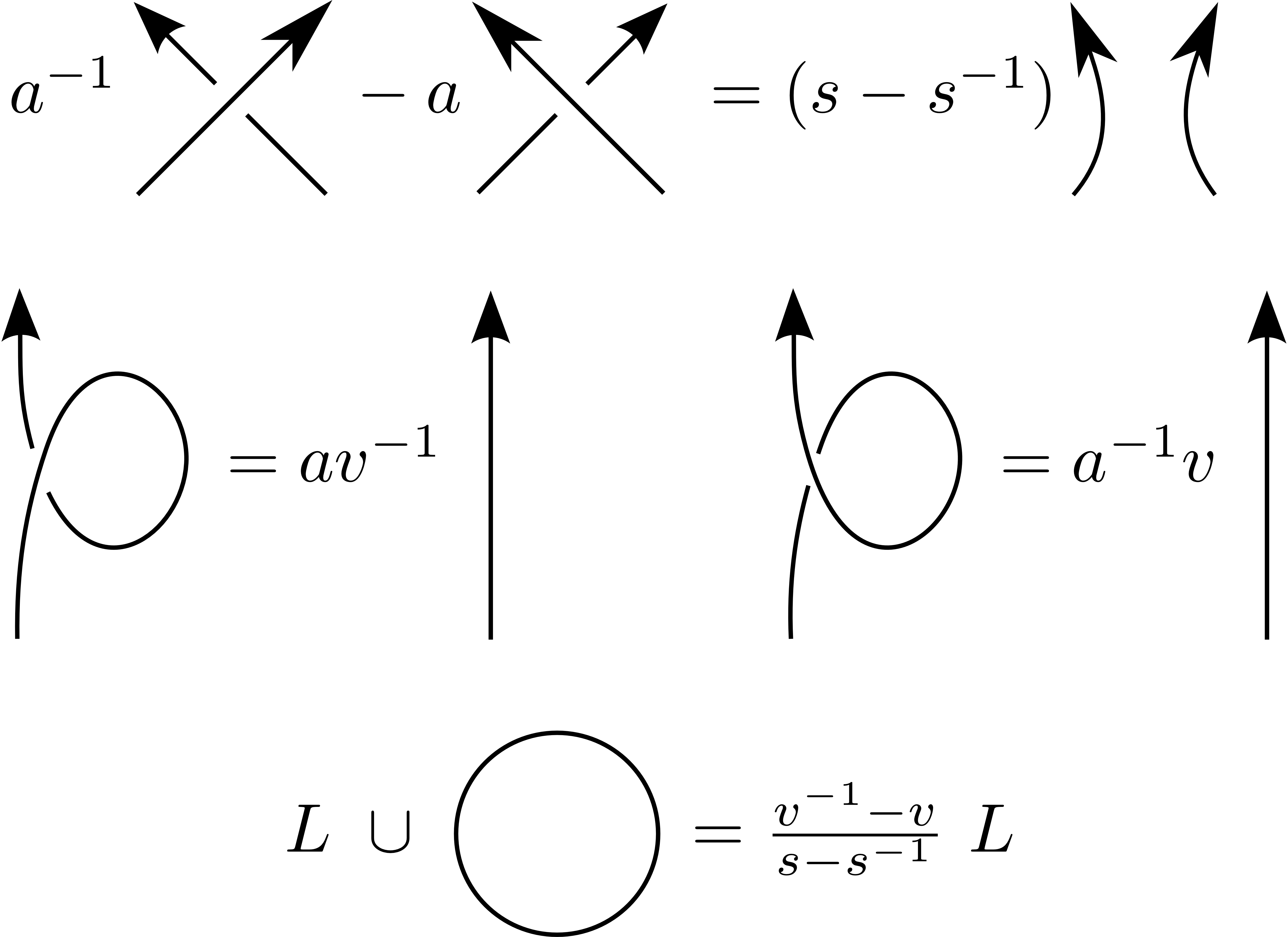}
  \caption{The framed Homflypt relations.}
  \label{homflypt}
\end{figure}
Let $M$ be a compact oriented $3$-manifold. Let $K$ be an integral domain containing invertible elements $a$, $s$, and $v$ such that furthermore $s - s^{-1}$ is invertible. Recall that the Homflypt skein module $\calH(M)$ is the $K$-module generated by isotopy classes of framed links in $M$ modulo the framed Homflypt relations in Figure~\ref{homflypt}. Here, a framed link is an oriented link together with a non-singular normal vector field up to isotopy. As always, there is also the concept of a relative Homflypt skein module; we only use this in Lemma~\ref{smatrix} where its meaning should be obvious (see \cite[p.~195]{Bla} for a precise definition). %As a particular example of a Homflypt skein module, the Homflypt polynomial $\langle \, \rangle : \calH(S^3) \to k$ is an isomorphism, normalized so that $\langle \emptyset \rangle = 1$.

Let $r = k + N$. In the following we will assume that $K = \bbC$, that
\begin{align}
  \label{valgafa}
  a = \exp(-2 \pi i/(2Nr)),
\end{align}
that $s = a^{-N}$, and that $v = s^{-N} = a^{N^2}$. The quantum representations, and indeed the quantum invariants, will depend on these choices and so let us comment on them further: The choices of $v$ and $s$ are natural as these choices are exactly the ones used in \cite{Bla} to construct from the Homflypt module a modular functor. In this construction, it is further required that $a$ is a primitive $2Nr$'th root of unity, and as such, we have some freedom in the choice and the particular choice made here is made only to ensure the connection with the $\PSL(2,\bbZ)$-representations arising in conformal field theory and quantum groups; we elaborate on this in Appendix~\ref{normaliseringer} where we show through explicit calculation that the $\PSL(2,\bbZ)$-representation from Blanchets theory agrees with those of conformal field theory in the case $N = 2$. An important remark here is that these choices will in fact \emph{not} specialize exactly to the corresponding theory for the Kauffman bracket discussed in \cite{BHMV1}, \cite{BHMV2}: The skein formula of that theory comes from using the framed Homflypt relations in Figure~\ref{homflypt}, letting $a = A^{-1}$, $s = A^2$, $v = -A^4$, where here $A$ is the usual Kauffman bracket variable. The two quantum $\SU(2)$-invariants of 3-manifolds without links turn out to be exactly the same as all sign discrepancies appear twice; see \cite[p.~238]{Ohtbog}. One difference, however, is that no specializations of the variables $A$ and $a$ will result in the same invariants of manifolds containing links -- unless the link in question contains an even number of components, as can be deduced from \cite[App.~H]{Ohtbog}, noting that the quantum group skein relations with parameter $q$ for $(\fraksl_2,V)$, $V$ being the vector representation, are exactly those obtained from the Homflypt relations, letting $q = a^{-1/4}$; this in itself is probably reason enough to believe that the above concrete choice of $a$ is the natural one, as it tells us that the resulting $\SU(2)$-invariants agree with those of Reshetikhin and Turaev, constructed to agree with the physical picture. Notation being settled, let $Z_k^{\SU(N)}(M,L)$ denote the \emph{quantum $\SU(N)$-invariant} of a $3$-manifold (with a choice of $2$-framing) containing a link $L$, as defined in Theorem~2.10 of \cite{Bla} together with the comments following Lemma~2.12 of the same paper. These invariants extend to TQFT functors $(V_k,Z_k) = (V_k^{\SU(N)},Z_k^{\SU(N)})$ on the cobordism category of $3$-manifolds (endowed with $2$-framings).

We are now able to describe the construction of $(V_k,\rho_k)$. Let $\Sigma$ be a standard oriented surface in $S^3$ bounding a standard handlebody $H$. The vector space $V_k(\Sigma)$ can be defined as a certain quotient of $\calH(H)$ and is spanned by pairs $(H,L_\alpha)$ for particular (isotopy classes of) framed links $L_\alpha$. In fact, by the axioms of TQFT, any pair $(M,L)$ with $\dd M = \Sigma$ gives rise to an element of $V_k(\Sigma)$.

We now describe the \emph{quantum representation} $\rho_k : \MCG(\Sigma) \to \bbP\Aut(V_k(\Sigma))$. Let $f \in \MCG(\Sigma)$ be a mapping class. The action $\rho_k(f) : V_k(\Sigma) \to V_k(\Sigma)$ of $f$ is given simply by
\begin{align}
  \label{quantumrepdef}
	\rho_k(f)(H,L_\alpha) = (H,L_\alpha) \cup_\Sigma C_\phi \in V_k(\Sigma),
\end{align}
where $\phi$ represents $f$ in $\MCG(\Sigma)$,
\begin{align*}
	C_\phi = (\Sigma \times [0,\tfrac{1}{2}]) \cup_{(x,\tfrac{1}{2}) \sim (\phi(x),\tfrac{1}{2})} (\Sigma \times [\tfrac{1}{2},1])
\end{align*}
is the mapping cylinder of $\phi$, and the gluing in \eqref{quantumrepdef} is performed along $\Sigma \times \{0\}$. This determines a well-defined projective representation $\rho_k$. In other words, $\rho_k(f)$ is nothing but the endomorphism of $V_k(\Sigma)$ determined in TQFT by the mapping cylinder $C_\phi$. From the TQFT axioms, it follows that the quantum $\SU(N)$-invariants of the mapping torus
\begin{align*}
	T_\phi = (\Sigma \times [0,1])/ (x,0) \sim (\phi(x),1)
\end{align*}
are given by
\begin{align*}
	Z_k (T_f) = \tr Z_k(C_f) = \tr \rho_k(f),
\end{align*}
where we now ignore the (non-)dependence on the particular choice of representative of $f$ in our notation. We will further be interested in the case where the mapping torus $T_\phi$ contains a link $L$ avoiding some fiber $\Sigma_0$ of $T_f$ viewed as a $\Sigma$-bundle over $S^1$. In this case, $Z_k(T_f,L)$ can once again be understood through cutting $T_f$ along the fiber $\Sigma_0$, resulting in a mapping torus $(C_f,L)$, whose induced TQFT-endomorphism $Z_k(C_f,L)$ can be understood via its action on $(H,L_\alpha)$ as above. As is well-known, the TQFT under consideration extends to a so-called 3-2-1 TQFT which in practice means that we could also allow $L$ to intersect all fibers; as we only consider a particularly simple example we will not need this but hope to address this question in future work.

Now, we should note that the projective ambiguity in the definition of the quantum representations $\rho_k$ corresponds exactly (see e.g. Remark~\ref{framingrem} and \cite[IV.5.1]{Tu}) to the framing anomalies discussed in Remark~\ref{framings} and they will thus be ignored in the following. As is well-known, one can also obtain linear quantum representations $\tilde{\rho_k} : \widetilde{\MCG}(\Sigma) \to \Aut(V_k)$ of a central extension of the mapping class group.

\subsection{Invariants of torus mapping tori}
\label{invmaptori}
In this section, we fix the notation used throughout the rest of the paper. Let $\Sigma = S^1 \times S^1$ be the torus and let $\mu = \{1\} \times S^1, \nu = S^1 \times \{1\}$ be the meridinal resp. longitudinal curves in $\Sigma$, oriented so that their algebraic intersection number is $i(\mu,\nu) = 1$, and let $\Sigma$ have the orientation determined by the ordered pair $(\mu,\lambda)$. For a simple closed curve $\gamma$ in $\Sigma$, let $t_\gamma$ denote the left Dehn twist about $\gamma$, so that in the standard identification of $\MCG(\Sigma)$ with $\SL(2,\bbZ)$ via its action on homology, we have
\begin{align*}
	t_\mu = \begin{pmatrix} 1 & -1 \\ 0 & 1 \end{pmatrix}, \quad t_\nu = \begin{pmatrix} 1 & 0 \\ 1 & 1 \end{pmatrix}.
\end{align*}
Furthermore, let $\inv = -\id \in \SL(2,\bbZ)$ be the mapping class of elliptic involution. The TQFT vector space $V_k^{\SU(N)}(\Sigma)$ associated to $\Sigma$ has a natural basis $(D^2 \times S^1,\hat{y}_\lambda) \in V_k(\Sigma) = \calH(D^2 \times S^1)/\sim$, consisting of handlebodies containing longitudinal framed links coloured by $\lambda \in \Gamma_{N,k}$. Here the labelling set $\Gamma_{N,k}$ consists of all Young diagrams $\lambda = (\lambda_1 \geq \cdots \geq \lambda_p \geq 1)$ with $\lambda_i \leq k$ and $p < N$, and $\hat{y}_\lambda$ is defined as in \cite[p.~200]{Bla}. For a Young diagram $\lambda$ as above, we index its cells $(i,j)$ and  write
\begin{align*}
	\lambda = \{ (i,j) \mid 1 \leq i \leq p, \, 1 \leq j \leq \lambda_i\}.
\end{align*}

A simple linear algebra argument shows that any element of $\SL(2,\bbZ)$ with trace $\pm 2$ is conjugate to
\begin{align*}
	\begin{pmatrix} \pm 1 & -b \\ 0 & \pm 1 \end{pmatrix}
\end{align*}
for some $b \in \bbZ$. In particular, since $\inv$ is central, the general Dehn twist relation $f t_\gamma f^{-1} = t_{f(\gamma)}$ (see e.g. \cite[Fact.~3.7]{FM}) together with the change of coordinate principle tells us that any mapping torus over a torus with monodromy of trace $\pm 2$ is homeomorphic to either $M^b = T_{t_\mu^b}$ or $\tilde{M}^b = T_{\inv t_\mu^b}$ for some $b \in \bbZ$. To understand the quantum invariants of these $3$-manifolds, it therefore suffices to describe $\rho_k(t_\mu)$ and $\rho_k(\inv)$.

The action of $t_\mu$ on $V_k(\Sigma)$ can be understood as in \eqref{quantumrepdef} via the surgery description of $Z_k$ (see also \cite{Rob} for the general idea). More precisely, $C_{t_\mu}$ can be described as surgery on a meridinal curve in $\Sigma \times \{\tfrac{1}{2}\} \subseteq \Sigma \times [0,1]$ with framing $1$ relative to $\Sigma \times \{\tfrac{1}{2}\}$. Denoting this curve by $L$, the definition of the $Z_k$ implies that
\begin{align*}
	Z_k(C_{t_\mu}) = \Delta^{-1} Z_k(\Sigma \times [0,1],L) \in \End(V_k(\Sigma)),
\end{align*}
where $\Delta$ is the Homflypt polynomial of the unlink with a single positive twist, coloured by Blanchet's surgery element $\omega$ (see \cite{Bla}). Thus, gluing the mapping cylinder $C_{t_\mu}$ to $(H,\hat{y}_\lambda)$ has the net effect of adding to $H$ a meridinal curve encircling $\hat{y}_\lambda$ coloured by $\omega$ with a single positive twist. Since $\omega$ is defined to behave well under Kirby moves, we can remove it at the cost of giving $\hat{y}_\lambda$ a \emph{negative} twist and multiplying the result by $\Delta$.

On the other hand, the result of giving $\hat{y}_\lambda$ a positive twist is given by \cite[Prop.~1.11~(b)]{Bla}. It follows that $\rho_k(t_\mu)$ is diagonal in the basis given by the $\hat{y}_\lambda$ and has entries
\begin{align}
  \label{tblanchet}
	(\rho_k(t_\mu))_{\lambda, \lambda'} = a^{-\abs{\lambda}^2 + N^2\abs {\lambda} + 2N \sum_{(i,j) \in \lambda} \mathrm{cn}(i,j)} \delta_{\lambda,\lambda'},
\end{align}
where the sum is over all cells $(i,j)$ of the Young diagram $\lambda$, $\mathrm{cn}(i,j) = j-i$ is the \emph{content} of $(i,j)$, and $\abs{\lambda}$ denotes the number of cells in $\lambda$. In conclusion,
\begin{align}
  \label{dehntwistinvariant}
	Z_k(T_{t_\mu^b}) = \sum_{\lambda \in \Gamma_{N,k}} (\rho_k(t_\mu^b))_{\lambda,\lambda} = \sum_{\lambda \in \Gamma_{N,k}} a^{b(-\abs{\lambda}^2 + N^2\abs {\lambda} + 2 N\sum_{(i,j) \in \lambda} \mathrm{cn}(i,j))}.
\end{align}

The action of $\inv$ on $V_k(\Sigma)$ can be described similarly. We refer to \cite[IV.5.4]{Tu} for the topological details (notice that Turaev uses the opposite torus orientation but that this makes no difference in representing $\inv$ -- for the twist, one has to be a bit more careful) and note that the result is
\begin{align*}
	(\rho_k(\inv))_{\lambda,\lambda'} = \delta_{\lambda,(\lambda')^\star},
\end{align*}
where $^\star$ denotes the involution on $\Gamma_{N,k}$ defined in \cite[p.~206]{Bla}. In particular this implies that for $N = 2$, $\inv$ is in the (projective) kernel of every $\rho_k$. In general, it follows that
\begin{align*}
	Z_k(T_{\inv t_{\mu}^b}) &= \tr Z_k(C_{\inv t_\mu^b}) = \tr(\rho_k(\inv) \rho_k(t_\mu^b)) = \tr \rho_k(t_\mu)^b = \sum_{\lambda \in \Gamma_{N,k}} \rho_k(t_\mu)_{\lambda,\lambda}^b \delta_{\lambda,\lambda^\star},
\end{align*}
and in particular that
\begin{align}
  \label{su2invo}
	Z_k^{\SU(2)}(T_{\inv t_{\mu}^b}) = Z_k^{\SU(2)}(T_{t_{\mu}^b}).
\end{align}

\section{The case $G = \SU(2)$}
\label{su2afsnit}
\subsection{The quantum invariants}
The main technical tool we need is a generalization of quadratic reciprocity by Jeffrey \cite{Jef} which we recall here. For proofs, see \cite{Jefphd}, \cite[App.]{HT}.
\begin{thm}[Jeffrey]
  \label{quadrecisun}
  Let $r \in \bbZ$, let $V$ be a real vector space of dimension $l$ with an inner product $\langle \, , \,\rangle$, and let $\Lambda$ be a lattice in $V$ with dual lattice $\Lambda^*$. Let $B : V \to V$ be a self-adjoint automorphism and let $\psi \in V$. Assume that
  \begin{align*}
		\tfrac{1}{2} r\langle \lambda , B \lambda \rangle, \, \langle \lambda , B \eta \rangle, \, r \langle \lambda , \psi \rangle, \, \tfrac{1}{2}r\langle \mu,B\mu\rangle, \, r\langle\mu,\xi\rangle,\, r\langle\mu,\psi \rangle \in \bbZ
  \end{align*}
  for all $\lambda , \eta \in \Lambda$, $\mu, \xi \in \Lambda^*$. Then
  \begin{align*}
		\vol(\Lambda^*)& \sum_{\lambda \in \Lambda/r\Lambda} \exp(i\pi \langle \lambda,B\lambda/r\rangle)\exp(2\pi i \langle\lambda, \psi\rangle) \\
		  &= \left( \det \frac{B}{i}\right)^{-1/2} r^{l/2} \sum_{\mu \in \Lambda^* / B \Lambda^*} \exp(-\pi i \langle \mu + \psi , r B^{-1}(\mu + \psi) \rangle).
  \end{align*}
\end{thm}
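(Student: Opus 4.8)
The plan is to establish this as a multidimensional Gauss sum reciprocity, following the classical strategy of evaluating a Gaussian integral in two different ways — once by ``Poisson summation'' on $\Lambda$ and once on $\Lambda^*$ — but implemented entirely within the framework of finite abelian groups and characters so that the hypotheses on integrality are used to make all the relevant pairings well-defined. First I would reduce to the case $\psi \in \Lambda^* \otimes \bbQ$: the integrality hypothesis $r\langle\mu,\psi\rangle \in \bbZ$ for all $\mu \in \Lambda^*$ says exactly that $r\psi$ lies in the double dual $(\Lambda^*)^* = \Lambda$, hence $\psi \in \tfrac1r\Lambda$, so the quotients $\Lambda/r\Lambda$ and $\Lambda^*/B\Lambda^*$ appearing on the two sides are genuine finite sets and the summands are well-defined (one must check, using $\langle\lambda,B\eta\rangle \in \bbZ$, that replacing $\mu$ by $\mu + B\nu$ on the right changes the exponent by an integer, and similarly on the left using $\tfrac12 r\langle\lambda,B\lambda\rangle \in \bbZ$ together with $\langle\lambda,B\eta\rangle\in\bbZ$ to handle the cross terms when $\lambda \mapsto \lambda + r\eta$).

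Next I would introduce the auxiliary Gaussian $g(x) = \exp(i\pi\langle x, Bx/r\rangle + 2\pi i\langle x,\psi\rangle)$ on $V$ and compute $\sum_{\gamma \in \Lambda} g(x + \gamma)$ by Poisson summation, using that the Fourier transform of a Gaussian $\exp(i\pi\langle x, Bx/r\rangle)$ on an $l$-dimensional inner product space is $(\det(B/(ir)))^{-1/2} \exp(-i\pi r\langle\xi, B^{-1}\xi\rangle)$ — this is where the factors $(\det(B/i))^{-1/2}$ and $r^{l/2}$ and the volume $\vol(\Lambda^*)$ enter, via the standard Poisson summation formula $\sum_{\gamma\in\Lambda} f(\gamma) = \vol(\Lambda)^{-1}\sum_{\xi\in\Lambda^*}\hat f(\xi)$ and $\vol(\Lambda)\vol(\Lambda^*) = 1$. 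Evaluating at $x = 0$ relates a theta-like sum over $\Lambda$ to one over $\Lambda^*$; then one collapses each side modulo the appropriate sublattice ($r\Lambda$ on the left, $B\Lambda^*$ on the right) by grouping terms, using precisely the integrality conditions to see that the phase is constant along cosets so that each coset contributes $\abs{\Lambda/r\Lambda} = r^l$ (resp.\ $\abs{\Lambda^*/B\Lambda^*} = \abs{\det B}$) identical copies — the bookkeeping of these index factors against $r^{l/2}$ and $\det(B/i)$ is the part most prone to error. Alternatively, and perhaps more cleanly, one can avoid analysis entirely: view both sides as Gauss sums attached to the finite quadratic form $q(\lambda) = \tfrac12\langle\lambda, B\lambda/r\rangle \bmod \bbZ$ on the finite group $\Lambda^*/r\Lambda$ (shifted by $\psi$), and invoke Milgram's formula / the fact that a nondegenerate finite quadratic form and its "inverse" have reciprocal Gauss sums, with the signature defect supplying $(\det(B/i))^{-1/2}$.

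The main obstacle I anticipate is not the idea but the careful tracking of normalizations: getting the square root $(\det(B/i))^{-1/2}$ with the correct branch (i.e.\ the correct eighth root of unity, governed by the signature of $B$), and confirming that the lattice index factors $r^l$ and $\det B$ combine with the Poisson factor $\vol(\Lambda^*)\,(\det(B/(ir)))^{-1/2}$ to leave exactly $\vol(\Lambda^*)(\det(B/i))^{-1/2} r^{l/2}$ on the right. A secondary subtlety is that $B$ need not be positive definite, so the Gaussian integral defining $\hat g$ must be interpreted by analytic continuation (replace $B$ by $B + i\epsilon\,\Id$ and let $\epsilon \to 0^+$, or diagonalize $B$ over $\bbR$ and treat each eigendirection separately), and one must check this continuation is compatible with the lattice structure; the convergence/regularization of the non-compact sums is what makes the ``evaluate a Gaussian integral two ways'' heuristic require genuine care, which is presumably why the author defers to \cite{Jefphd} and \cite[App.]{HT} for the full details.
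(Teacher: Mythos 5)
The paper itself gives no proof of this theorem: it is quoted from Jeffrey and the reader is referred to \cite{Jefphd} and the appendix of \cite{HT}, so there is no in-paper argument to compare against. Your strategy --- Poisson summation applied to a Gaussian, analytic continuation in $B$ (via $B+i\epsilon\,\Id$) to handle indefiniteness, and the integrality hypotheses used to make both quotient sums well defined --- is precisely the strategy of those references, so the method is right. One technical correction: the full-lattice sum $\sum_{\gamma\in\Lambda} g(x+\gamma)$ you propose to Poisson-sum has unimodular terms and diverges even after restricting to $\epsilon>0$ only in spirit; the cited proofs instead periodize the \emph{finite} sum, i.e.\ observe that
\begin{align*}
f(t)=\sum_{\lambda\in\Lambda/r\Lambda}\exp\bigl(i\pi\langle\lambda+t,B(\lambda+t)/r\rangle+2\pi i\langle\lambda+t,\psi\rangle\bigr)
\end{align*}
is well defined and $\Lambda$-periodic in $t$ (exactly because $\tfrac12 r\langle\eta,B\eta\rangle$, $\langle\lambda,B\eta\rangle$ and $r\langle\eta,\psi\rangle$ are integers), expand $f$ in a Fourier series over $\Lambda^*$, evaluate the Fourier coefficients as Gaussian integrals by the continuation you describe, and set $t=0$. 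This replaces the coset-collapsing bookkeeping you flag as error-prone by a single Fourier computation, and it is where the factors $\vol(\Lambda^*)$, $r^{l/2}$ and $(\det(B/i))^{-1/2}$ (with its signature-dependent branch) emerge cleanly.
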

An immediate corollary of Jeffrey's quadratic reciprocity theorem is the following well-known formula for generalized Gauss sums (where we correct a minor typo in \cite{Jef}).
\begin{thm}
  \label{quadreci}
  Let $a,b,c$ be integers, $a \not= 0$, $c \not= 0$, and assume that $ac+b$ is even. Then
  \begin{align*}
    \sum_{n=0}^{\lvert c \rvert -1} \exp(\pi i(an^2+bn)/c) = \lvert c/a \rvert^{1/2} \exp\left(\pi i \frac{\lvert ac \rvert-b^2}{4ac}\right) \sum_{n=0}^{\lvert a\rvert-1} \exp(-\pi i (c n^2+b n)/a).
  \end{align*}
\end{thm}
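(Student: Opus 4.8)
The plan is to deduce Theorem~\ref{quadreci} from Jeffrey's Theorem~\ref{quadrecisun} by making the right choice of one-dimensional data. First I would set $l = 1$, take $V = \bbR$ with the standard inner product $\langle x, y\rangle = xy$, and choose the lattice $\Lambda = \sqrt{2\lvert c\rvert}\,\bbZ$ (or some rescaling thereof) so that the dual lattice $\Lambda^*$ is a commensurable copy of $\bbZ$; the point of the scaling is to turn the quadratic form $an^2$ and linear form $bn$ appearing in the statement into the pairings $\tfrac12 r\langle\lambda,B\lambda/r\rangle$ and $2\pi i\langle\lambda,\psi\rangle$ after one identifies $\Lambda/r\Lambda$ with $\{0,1,\dots,\lvert c\rvert - 1\}$ via $n \mapsto \sqrt{2\lvert c\rvert}\,n$ (times a suitable unit). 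Concretely I expect to take $r \rightsquigarrow \lvert c\rvert$ in the notation of Theorem~\ref{quadrecisun}, $B$ the multiplication operator by $\pm a/c$ up to the lattice normalization, and $\psi$ the vector encoding $b/(2c)$, then check that the six integrality hypotheses on $B$, $\psi$, $\Lambda$, $\Lambda^*$ reduce exactly to ``$a,b,c \in \bbZ$'' together with the parity condition ``$ac + b$ even'' (the parity condition is what guarantees the cross terms $\langle\lambda,B\eta\rangle$ and $r\langle\mu,\psi\rangle$ land in $\bbZ$ rather than $\tfrac12\bbZ$).

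With the dictionary in place, the left-hand side of Jeffrey's identity becomes $\vol(\Lambda^*)\sum_{n=0}^{\lvert c\rvert-1}\exp(\pi i(an^2+bn)/c)$ and the right-hand side becomes $(\det(B/i))^{-1/2}\,\lvert c\rvert^{1/2}\sum_{n=0}^{\lvert a\rvert-1}\exp(-\pi i(cn^2+bn)/a)$, after the analogous identification of $\Lambda^*/B\Lambda^*$ with $\{0,\dots,\lvert a\rvert - 1\}$. It then remains to match the scalar prefactors. I would compute $\vol(\Lambda^*)$ in terms of $\lvert c\rvert$, compute $\det(B/i)$ in terms of $a$, $c$ and the chosen normalization, and complete the square in the exponent on the right: the shift $\mu \mapsto \mu + \psi$ in Jeffrey's formula produces the term $\exp(-\pi i\langle\psi, cB^{-1}\psi\rangle)$ which, after unwinding, is precisely $\exp(\pi i (-b^2)/(4ac))$, while the branch of $(\det(B/i))^{-1/2}$ supplies the $\exp(\pi i\lvert ac\rvert/(4ac))$ factor. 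Collecting, $\lvert c\rvert^{1/2}/\vol(\Lambda^*)$ together with $(\det(B/i))^{-1/2}$ should combine to $\lvert c/a\rvert^{1/2}$ times the asserted eighth-root-of-unity phase $\exp(\pi i(\lvert ac\rvert - b^2)/(4ac))$.

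The main obstacle I anticipate is bookkeeping of the square-root branch in $(\det(B/i))^{-1/2}$ and the sign of $a/c$: getting the phase $\exp(\pi i \lvert ac\rvert/(4ac))$ correct — as opposed to its conjugate or a spurious sign — requires being careful about the orientation of the inner product, the sign of the self-adjoint operator $B$ (which is where the $\pm$ ambiguity and the $\lvert\cdot\rvert$'s in the final formula come from), and the principal-branch convention implicit in Jeffrey's theorem. A secondary, purely mechanical obstacle is verifying that the index sets $\Lambda/r\Lambda$ and $\Lambda^*/B\Lambda^*$ really have $\lvert c\rvert$ and $\lvert a\rvert$ elements respectively and that the summand is well-defined on these quotients — this again comes down to the parity hypothesis. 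I would relegate the exponent algebra (completing the square) to a one-line computation and focus the written proof on the choice of $(V,\Lambda,B,\psi)$ and the prefactor match, noting that this is essentially the specialization carried out in \cite{Jefphd} and \cite[App.]{HT}, with the typo in \cite{Jef} being precisely in one of these prefactor terms.
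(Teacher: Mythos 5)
Your overall strategy --- specializing Theorem~\ref{quadrecisun} to a one-dimensional lattice --- is exactly what the paper intends when it calls Theorem~\ref{quadreci} an ``immediate corollary'', and the completing-the-square and prefactor bookkeeping you outline is the right shape. But there is a genuine gap in the step you treat as routine: the six integrality hypotheses of Theorem~\ref{quadrecisun} do \emph{not} reduce to ``$ac+b$ even'' under any specialization with $\abs{\Lambda/r\Lambda}=\abs{c}$. If $\Lambda=\alpha\bbZ$ and $r=\pm c$, then matching the summand forces $B\alpha^2=\pm a$ and $\psi=b/(2c\alpha)$, whence $\tfrac12 r\langle\lambda,B\lambda\rangle=\tfrac12 ac\,n^2$ and $r\langle\lambda,\psi\rangle=\pm\tfrac12 bn$; these lie in $\bbZ$ for all $n$ only if $ac$ and $b$ are \emph{individually} even, which the hypothesis ``$ac+b$ even'' does not give (both may be odd). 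No rescaling $\alpha$ helps, and your particular choice $\Lambda=\sqrt{2\abs{c}}\,\bbZ$ makes matters worse on the dual side: with $B=a/(2c)$ and $\mu=m/\sqrt{2\abs{c}}$ one finds $\tfrac12 r\langle\mu,B\mu\rangle=am^2/(8c)$, which is essentially never an integer. So as written your argument proves the formula only when $ac$ and $b$ are both even.

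The standard fix is a doubling trick, and it is there --- not in the integrality conditions --- that the parity hypothesis is actually used. Take $\Lambda=\Lambda^*=\bbZ$, $r=2c$, $B=2a$, $\psi=b/(2c)$ (for $c>0$, say; the case $c<0$ follows by conjugating and negating $a,b,c$). Then all six conditions hold with no parity assumption ($\tfrac12 rBn^2=2acn^2$, $\langle\lambda,B\eta\rangle=2a\lambda\eta$, $r\langle\lambda,\psi\rangle=bn$, etc.), and Theorem~\ref{quadrecisun} gives
\begin{align*}
  \sum_{n=0}^{2\abs{c}-1}\exp(\pi i(an^2+bn)/c)
  = \left(\frac{2a}{i}\right)^{-1/2}(2c)^{1/2}\exp\left(-\frac{\pi i b^2}{4ac}\right)\sum_{n=0}^{2\abs{a}-1}\exp(-\pi i(cn^2+bn)/a).
\end{align*}
Now ``$ac+b$ even'' implies $a(n+c)^2+b(n+c)\equiv an^2+bn\ (\mathrm{mod}\ 2c)$ and $c(n+a)^2+b(n+a)\equiv cn^2+bn\ (\mathrm{mod}\ 2a)$, so both summands are periodic with period $\abs{c}$ and $\abs{a}$ respectively and both sums are exactly twice the sums over $0\le n\le\abs{c}-1$ and $0\le n\le\abs{a}-1$. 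Cancelling the factor $2$ and writing $(2a/i)^{-1/2}(2c)^{1/2}=(ic/a)^{1/2}=\abs{c/a}^{1/2}\exp(\pi i\,\sgn(ac)/4)=\abs{c/a}^{1/2}\exp(\pi i\abs{ac}/(4ac))$ yields the stated identity. Your anticipated difficulty (branch of the square root) is real but minor; the missing idea is this passage from modulus $2c$ to modulus $c$.
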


As it does not complicate matters to any greater extent and might shed light on the asymptotic expansion conjecture for links (which to the authors' knowledge has never been formulated explicitly), we will be interested in pairs $(M^b,L_\lambda)$, where $L_\lambda$ is a $\lambda$-coloured link parallel to the meridinal curve used in the surgery description of $C_{t_\mu}$. More concretely, $(M^b,L_\lambda)$ has the surgery description shown in Figure~\ref{kirurgipic}.

Note that the pairs $(M^b,L_\lambda)$, $b \not= 0$, are exactly of the type that Beasley considers in \cite{Bea}. It could be interesting to compare the work of Beasley with the present results, but we make no such attempt.

Elements of $\Gamma_{2,k}$ correspond simply to integers $j$, $0 \leq j \leq k$, and we will simply write $j$ for the Young diagram containing $j$ cells.

\begin{thm}
  \label{stnmedlinks}
  For $k \geq j$ and $b \not= 0$, we have
  \begin{align*}
		Z_k^{\SU(2)}(M^b,L_j) = & \,\exp\left(\frac{\pi i b}{2r}\right) \left( \sqrt{\frac{r}{2\lvert b\rvert}}\exp(-\pi i \mathrm{sgn}(b)/4) \right.\\
		&\left.\cdot \Bigg[ \sum_{n=0}^{\lvert b \rvert - 1} \exp\left(2\pi ir\frac{n^2}{b}\right) \sum_{l=0}^j \exp\left(2 \pi i\left(\frac{(2l-j)^2}{4br} + \frac{(2l-j)n}{b}\right) \right)\Bigg]  \right. \\
		  &\left. -\frac{j+1}{2} - \frac{(-1)^j(j+1)}{2}\exp\left(-\frac{\pi i}{2}b r\right) \right).
  \end{align*}
  where $r  = k+2$.
\end{thm}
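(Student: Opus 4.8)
The plan is to start from the closed formula for the quantum invariant of a mapping torus containing a link parallel to the surgery curve, expressed as a trace over $V_k^{\SU(2)}(\Sigma)$. Concretely, $Z_k^{\SU(2)}(M^b, L_j)$ is obtained by inserting the $j$-coloured curve alongside the $b$-fold twist operator $\rho_k(t_\mu)^b$ and taking the trace; using the diagonalization \eqref{tblanchet} for $N=2$ and the fusion rules to expand the product of a $\hat y_\lambda$ with the $j$-coloured meridian, this becomes a finite double sum: one sum over the basis index $n$ (ranging over $0,\dots,k$, i.e. over $\Gamma_{2,k}$) weighted by the twist eigenvalue $a^{b(\cdots)}$, and an inner sum over the Clebsch--Gordan range $l = 0,\dots,j$ coming from $n\otimes j = \bigoplus_{l} (n - j + 2l)$ (truncated by the level, but the truncation terms will be seen to cancel — see below). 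After substituting $a = \exp(-2\pi i/(2Nr)) = \exp(-\pi i /(2r))$ and simplifying the content sum $\sum_{(i,j)\in\lambda}\mathrm{cn}(i,j)$ for a single-row diagram, the exponent becomes an explicit quadratic polynomial in $n$ and $l$, so the whole thing is a Gauss-type sum of the shape $\sum_n \exp(\pi i (\text{quadratic in } n)/(\text{something}\cdot r))$ times a companion sum over $l$.

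Next I would apply Jeffrey's reciprocity in the form of Theorem~\ref{quadreci} (or, if the cross-terms between $n$ and $l$ force it, the multivariable Theorem~\ref{quadrecisun} with $V = \bbR^2$, lattice $\bbZ^2$, and an explicit symmetric matrix $B$ encoding the quadratic form in $(n,l)$) to flip the sum of length $|c|$ proportional to $r$ into a sum of length $|a|$ proportional to $|b|$. This is exactly the maneuver that turns a sum with $k+2 = r$ terms into the stated sum over $n = 0,\dots,|b|-1$ with the inner sum over $l = 0,\dots,j$; the prefactor $\sqrt{r/(2|b|)}\exp(-\pi i \,\mathrm{sgn}(b)/4)$ and the overall phase $\exp(\pi i b/(2r))$ are precisely what the reciprocity formula produces (the $|c/a|^{1/2}$ becomes $\sqrt{r/(2|b|)}$, the $\exp(\pi i(|ac|-b^2)/(4ac))$ collapses to the sign-of-$b$ eighth root of unity together with the $b/(2r)$ phase after separating the $b$-independent root-of-unity piece). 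One must verify the integrality hypotheses of Jeffrey's theorem for the relevant $a,b,c$; since $N=2$ and $r = k+2$, the parity condition "$ac + b$ even" needs a short case check, and this is where the hypothesis $b \neq 0$ and the precise normalization of $a$ enter.

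The remaining, and I expect genuinely most delicate, step is bookkeeping of the \emph{boundary/truncation terms}. The naive fusion sum $n \otimes j$ is cut off by the level whenever $n + j > k$ (or $n - j < 0$ in the reflected regime), and the Verlinde truncation introduces correction terms at $n$ near $0$ and near $k$; these are responsible for the two "defect" terms $-\tfrac{j+1}{2} - \tfrac{(-1)^j(j+1)}{2}\exp(-\tfrac{\pi i}{2} b r)$ in the statement. Handling these cleanly requires either extending the sum to a full period and using the symmetry $j \leftrightarrow r - 2 - j$ of the twist eigenvalues to fold the unwanted range back (the $(-1)^j$ and the $\exp(-\pi i b r/2)$ are artifacts of evaluating the twist eigenvalue $a^{b(\cdots)}$ at the reflected label $r - \cdot$), or a direct counting of how many $(n,l)$ pairs fall outside the admissible window. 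This is also precisely the "particular technical issue" the introduction alludes to — the trace-$\pm2$ case has $B$ non-invertible in the most naive setup, so the reciprocity must be applied to a reduced form and the kernel directions accounted for separately, which is exactly what produces these extra finite terms. I would organize the proof so that Theorem~\ref{quadreci} is applied to the "bulk" sum and a separate elementary lemma evaluates the finite defect, then combine and simplify; the constant $j+1$ is just $\dim$ of the $j$-th representation, a reassuring sanity check on the defect computation.
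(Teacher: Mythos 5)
Your overall strategy---diagonalize the twist, write the link contribution as an inner sum over $l=0,\dots,j$, apply Gauss-sum reciprocity (Theorem~\ref{quadreci}) to convert a length-$r$ sum into a length-$\abs{b}$ sum, and account separately for finitely many boundary terms---is indeed the paper's strategy. But two of your central technical claims are off, and as written the argument would not close up. First, there is no fusion and no Verlinde truncation anywhere in this computation. The link $L_j$ is meridinal, so the associated curve operator is \emph{diagonal} on the basis $\hat{y}_n$ with eigenvalue $(a^{2(n+1)(j+1)}-a^{-2(n+1)(j+1)})/(a^{2(n+1)}-a^{-2(n+1)})$ (Lemma~\ref{smatrix}); the inner sum over $l=0,\dots,j$ is just the geometric-series identity \eqref{sinuspjat} applied to this eigenvalue, valid for every $n$ with no level cut-off. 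Your plan to expand $n\otimes j$ as a level-truncated fusion product and then show the truncation terms cancel sets up the wrong intermediate sum (that would be the right picture for a longitudinal link) and manufactures a cancellation problem that does not exist.

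Second, the defect terms $-\tfrac{j+1}{2}-\tfrac{(-1)^j(j+1)}{2}\exp(-\pi i br/2)$ are not truncation artifacts. They arise because the natural sum runs over $n=1,\dots,r-1$ while Theorem~\ref{quadreci} wants the full period $n=0,\dots,2r-1$ (here $c=2r$ and the linear coefficient $4l-2j$ is even, so the parity hypothesis $ac+b$ even is automatic---no case check is needed, and no two-variable reciprocity either, since for each fixed $l$ the linear term in $n$ is absorbed by the $bn$ slot of the one-dimensional formula). One folds via $n\mapsto 2r-n$, which preserves $bn^2$ mod $4r$ but negates the linear term; the folding therefore only closes up \emph{after} summing over $l$, using that $\sum_{l=0}^j\exp(\pi i(4l-2j)n/(2r))$ is real. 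This reality observation is the small but essential trick your outline omits. Granting it, the $n=0$ term of the extended sum contributes the $-\tfrac{j+1}{2}$ and the $n=r$ term contributes the $(-1)^j(j+1)\exp(-\pi i br/2)/2$, with no input from the fusion ring. Of the two mechanisms you offer for the defect terms, the second ("extend to a full period and fold back by the reflection symmetry") is the correct one and should replace the truncation story; the remark about non-invertible $B$ and kernel directions is likewise a red herring here, since in the $\SU(2)$ rank-one setting the quadratic form is just $bn^2$ and is perfectly invertible once $b\neq 0$.
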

\begin{rem}
  Recall that $\chi_j$, the composition of the character of the $(j+1)$-dimensional irreducible representation of $\SU(2)$ with $\exp(i \cdot)$ (see e.g. \cite[p.~143]{Fol}), is given by
  \begin{align*}
		\chi_j(t) = \sum_{l=0}^j \exp(i (2l-j)t) =  \begin{cases} j+1 & \text{if $t = 0$,} \\ (-1)^{j+1}(j+1) & \text{if $t = \pi$,} \\ \frac{\sin((j+1)t)}{\sin(t)} & \text{otherwise,} \end{cases}
  \end{align*}
  for $t \in [0,2\pi)$. Combining this with the description of the moduli space of flat connections of the manifolds $M^b$ in terms of their holonomies, given in the proof of Proposition~\ref{su2modulirum} below, one can directly relate the quantum invariant contributions coming from the links $L_j$ to the holonomies of the corresponding connections.
\end{rem}
\begin{proof}[Proof of Theorem~\ref{stnmedlinks}]
First of all, note that for $z \not= 0, \pm 1$,
\begin{align}
  \label{sinuspjat}
	\frac{z^{(j+1)}-z^{-(j+1)}}{z-z^{-1}} = z^{-j} \frac{z^{2j+2}-1}{z^2-1} = \sum_{l=0}^j z^{2l-j}.
\end{align}
\begin{figure}
  \centering
  \includegraphics[scale=0.7]{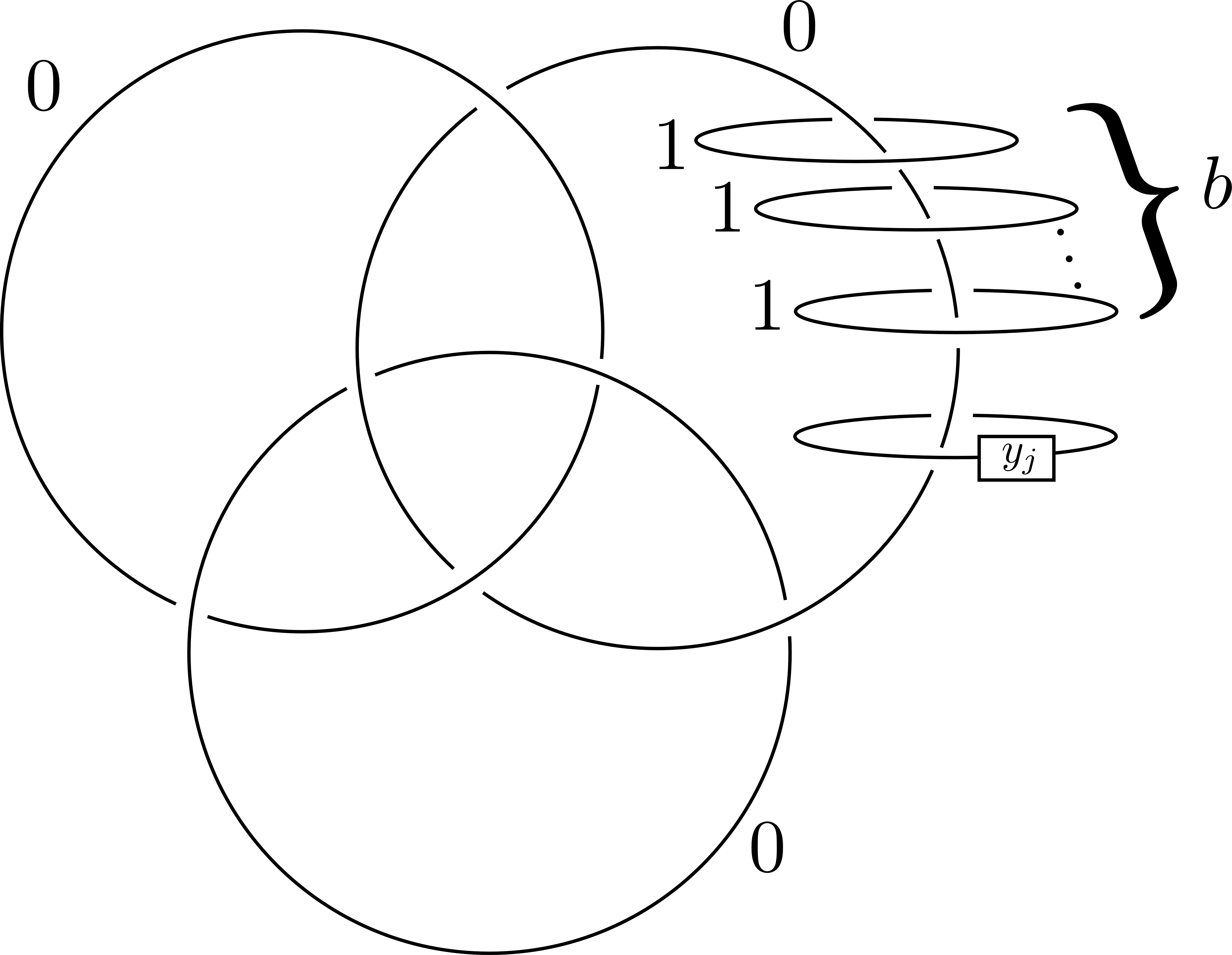}
  \caption{Surgery description of $(M,L_j)$. In this picture a link component with a number next to it means surgery along that component with framing according to the number; this should not be confused with the colourings used for idempotents (cf. also Appendix~\ref{normaliseringer}), such as the one on the component coloured $y_j$ in this picture.}
  \label{kirurgipic}
\end{figure}
In the description of mapping cylinders as acting on $V_k(S^1 \times S^1)$ via gluing, adding the link $L_j$ corresponds to applying the so-called curve operator, acting on the basis described in Section~\ref{invmaptori} by encircling the $\hat{y}_l$ by an unknot coloured $y_j$. This action is diagonal and described in Lemma~\ref{smatrix}. Applying the formula of Lemma~\ref{smatrix} and \eqref{sinuspjat} with $z = \exp(\pi i n/r)$, we therefore find that
\begin{align*}
	Z_k(M^b,L_j) &= \sum_{n=0}^{k} a^{b(n^2+2n)}\frac{a^{2(n+1)(j+1)}-a^{-2(n+1)(j+1)}}{a^{2(n+1)}-a^{-2(n+1)}} \\
	  &= \exp\left(\frac{\pi i b}{2r}\right) \sum_{n=1}^{r-1} \sum_{l=0}^j \exp\left(\frac{\pi i}{r}(-\tfrac{1}{2}n^2 b + 2ln - jn)\right) \\
	  &= \exp\left(\frac{\pi i b}{2r}\right) \sum_{l=0}^j  \sum_{n=1}^{r-1} \exp\left(\frac{\pi i}{2r}(-n^2b + 4ln - 2jn)\right).
\end{align*}
Quadratic reciprocity tells us that
\begin{equation}
\label{gaussreciigen}
\begin{aligned}
	\sum_{n=1}^{2r-1} \exp\left(\frac{\pi i}{2r}(bn^2+4ln - 2jn)\right) =&\,\sqrt{\frac{2r}{\lvert b \rvert}}\exp\left(\frac{\pi i}{4}\mathrm{sgn}(b)\right)\exp\left(-\frac{\pi i(4l-2j)^2 }{8br}\right)  \\
	&\,\cdot \left( \sum_{n=0}^{\lvert b \rvert -1} \exp\left(-\frac{\pi i}{b}(2rn^2+(4l-2j)n)\right) \right)-1.
\end{aligned}
\end{equation}
It now suffices to relate the sum on the left hand side of \eqref{gaussreciigen} to the corresponding one having upper limit $r-1$. Note now that mod $4r$, we have $bn^2 \equiv b(2r-n)^2$ and
\begin{align*}
	4ln-2jn \equiv -(4l-2j)(2r-n).
\end{align*}
Making in the process a change of variables $n \to r-n$, we conclude from this that
\begin{align*}
\sum_{n=r+1}^{2r-1} \exp\left(\frac{\pi i}{2r}(bn^2+4ln - 2jn)\right) &= \sum_{n=1}^{r-1} \exp\left(\frac{\pi i}{2r}(b(n+r)^2+(4l - 2j)(n+r))\right) \\
  &=\sum_{n=1}^{r-1} \exp\left(\frac{\pi i}{2r}(b(2r-n)^2+(4l - 2j)(2r-n))\right) \\
  &= \sum_{n=1}^{r-1} \exp\left(\frac{\pi i}{2r}(bn^2 - (4l-2j)n)\right).
\end{align*}
From this it follows that
\begin{align*}
	\sum_{n=1}^{2r-1} &\exp\left(\frac{\pi i}{2r}(bn^2+4ln - 2jn)\right) \\
	=&\, \sum_{n=1}^{r-1} \exp\left(\frac{\pi i}{2r}(bn^2+4ln - 2jn)\right) + (-1)^j\exp\left(\frac{\pi i}{2}br\right) \\
	&\,+ \sum_{n=r+1}^{2r-1} \exp\left(\frac{\pi i}{2r}(bn^2+4ln - 2jn)\right) \\
	=&\, \sum_{n=1}^{r-1} \exp\left(\frac{\pi i}{2r}bn^2\right)\left(\exp\left(\frac{\pi i}{2r}(4ln-2jn)\right) + \exp\left(-\frac{\pi i}{2r}(4ln-2jn)\right)\right) \\
	&\,+ (-1)^j \exp\left(\frac{\pi i}{2}br\right).
\end{align*}
Note now that even though the individual terms are not, the sum
\begin{align*}
	\sum_{l=0}^j \exp\left(\frac{\pi i}{2r}(4ln-2jn)\right)
\end{align*}
is real as in \eqref{sinuspjat}. We therefore find that
\begin{align*}
	2\sum_{l=0}^j \sum_{n=1}^{r-1} \exp\left(\frac{\pi i}{2r}(bn^2+4ln-2jn)\right) =&\, \sum_{l=0}^j \left( \sum_{n=1}^{2r-1} \exp\left(\frac{\pi i}{2r}(bn^2+4ln - 2jn)\right) \right)\\
	&\, - (-1)^j \exp\left(\frac{\pi i}{2}br\right),
\end{align*}
and combining all of this, performing an overall conjugation, we obtain the claim of the theorem.
\end{proof}
%\begin{rem}
%  Had we instead chosen to work with the quantum invariants arising from the Kauffman bracket, the curve operator used in the proof would carry an extra factor of $(-1)^{j}$ and so we would have obtained the same result for the quantum invariants only up to a factor of $(-1)^j$. In particular, the results agree in the link-less case $j = 0$.
%\end{rem}
Letting $j = 0$ in Theorem~\ref{stnmedlinks} and using \eqref{su2invo}, we immediately obtain the following formula for the quantum $\SU(2)$-invariants of the link-less manifolds.
\begin{cor}
  \label{stnudenlinks}
  For $b \not= 0$, we have
  \begin{align*}
	  Z_k^{\SU(2)}(M^b) = Z_k^{\SU(2)}(\tilde{M}^b) = \exp\left(\frac{\pi ib}{2r}\right) &\Bigg(\sqrt{\frac{r}{2\abs{b}}}\exp(-\pi i \sgn(b)/4) \sum_{n=0}^{\abs{b}-1}\exp(2\pi in^2/b)\\
	  &\,\,\,\,- \frac{1}{2} - \frac{\exp(-\pi i r b/2)}{2}\Bigg).
  \end{align*}
\end{cor}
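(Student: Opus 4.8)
The plan is to read Corollary~\ref{stnudenlinks} off from Theorem~\ref{stnmedlinks} by setting $j=0$, after first observing that the $0$-coloured link is invisible, and then invoking \eqref{su2invo} for the $\tilde M^b$ part.

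First I would record that $L_0$ is the empty link. The colour $0$ corresponds to the empty Young diagram, hence to the empty skein element, so $(M^b,L_0)$ is just $M^b$ and $Z_k^{\SU(2)}(M^b,L_0)=Z_k^{\SU(2)}(M^b)$. Equivalently, the curve operator of Lemma~\ref{smatrix} attached to the colour $y_0$ acts as the identity: this is \eqref{sinuspjat} with $j=0$, which gives the scalar $\frac{z-z^{-1}}{z-z^{-1}}=1$. Consistently, setting $j=0$ in the first line of the proof of Theorem~\ref{stnmedlinks} reduces it to $\sum_{n=0}^{k}a^{b(n^2+2n)}$, which is precisely \eqref{dehntwistinvariant} for $N=2$. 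So Theorem~\ref{stnmedlinks} with $j=0$ genuinely computes $Z_k^{\SU(2)}(M^b)$, and the hypothesis $k\ge j$ there becomes vacuous, leaving only $b\neq 0$.

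Next I would specialize $j=0$ in the closed formula of Theorem~\ref{stnmedlinks}. The inner sum $\sum_{l=0}^{j}$ then consists of the single term $l=0$, for which $2l-j=0$ and the summand equals $\exp(2\pi i\cdot 0)=1$; the double sum thus collapses to $\sum_{n=0}^{\abs{b}-1}\exp(2\pi i r n^2/b)$. The remaining constants specialize to $\tfrac{j+1}{2}=\tfrac12$ and $\tfrac{(-1)^j(j+1)}{2}\exp(-\tfrac{\pi i}{2}br)=\tfrac12\exp(-\tfrac{\pi i}{2}br)$, while the overall factor $\exp(\pi i b/(2r))$ and the prefactor $\sqrt{r/(2\abs{b})}\,\exp(-\pi i\sgn(b)/4)$ are unchanged. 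Reading this off yields the displayed formula for $Z_k^{\SU(2)}(M^b)$. Finally, $Z_k^{\SU(2)}(\tilde M^b)=Z_k^{\SU(2)}(M^b)$ is exactly \eqref{su2invo} applied to $\tilde M^b=T_{\inv t_\mu^b}$ and $M^b=T_{t_\mu^b}$, which followed from the fact that for $N=2$ the $\star$-involution is trivial, so that $\rho_k(\inv)$ is projectively the identity.

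I do not expect a genuine obstacle here; the corollary is a direct specialization. The only points requiring (minor) care are the identification of $L_0$ with the empty link --- so that Theorem~\ref{stnmedlinks} is genuinely computing the link-less invariant --- and the bookkeeping of phases and normalization factors. If one prefers a self-contained derivation that bypasses Theorem~\ref{stnmedlinks}, one can argue directly from \eqref{dehntwistinvariant} for $N=2$: completing the square gives $Z_k^{\SU(2)}(M^b)=\exp(\pi i b/(2r))\sum_{n=1}^{r-1}\exp(-\pi i b n^2/(2r))$, the substitution $n\mapsto 2r-n$ fixes each summand (since $2br$ and $2bn$ are even), so the sum over $1\le n\le r-1$ is half of the sum over $0\le n\le 2r-1$ minus the $n=0$ and $n=r$ terms, and one application of Theorem~\ref{quadreci} to $\sum_{n=0}^{2r-1}\exp(-\pi i b n^2/(2r))$ produces the stated Gauss sum with its prefactor. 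This is manifestly the $j=0$ instance of the computation carried out in the proof of Theorem~\ref{stnmedlinks}.
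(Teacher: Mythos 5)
Your proposal is correct and follows exactly the paper's route: the paper's entire proof is the sentence preceding the corollary, namely ``Letting $j=0$ in Theorem~\ref{stnmedlinks} and using \eqref{su2invo}, we immediately obtain\dots'', and your added checks (that $L_0$ is invisible and that the $j=0$ case of the theorem reduces to \eqref{dehntwistinvariant}) are exactly the right sanity checks. Note that your specialization correctly produces $\sum_{n=0}^{\abs{b}-1}\exp(2\pi i r n^2/b)$, with the factor $r$ in the exponent, which is what the paper actually uses later in the proof of Corollary~\ref{su2cs}; the corollary as printed omits this $r$ and appears to contain a typo, which your derivation implicitly corrects.
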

Plots of various values of this invariant are contained in Appendix~\ref{plots}.

\subsection{The mapping torus moduli space}
In the case of $G  = \SU(2)$, we can describe the moduli space of flat connections completely explicitly and find the following.
\begin{prop}
  \label{su2modulirum}
	Let $\gamma$ be the isotopy class of an essential simple closed curve in $\Sigma_1$, and let $b \in \bbZ$, $b \not = 0$. The moduli space $\calM$ of flat $\SU(2)$-connections on $M^b$ can be described as follows:
  
  For $b$ odd, it consists of a copy of the pillowcase, $\tfrac{\lvert b\rvert -1}{2}$ copies of the 2-torus $T^2$, as well as a component containing a single point.
  
  For $b$ even, it consists of 2 copies of the pillowcase and $\tfrac{\lvert b \rvert }{2}-1$ copies of $T^2$.
  
  The only irreducible connection is the one in the single point component for $b$ odd. On the various components, the Chern--Simons action takes the following (not necessarily distinct) values:
  \begin{align*}
    \mathrm{CS}(\calM) = \left\{ \begin{array}{ll} \{\tfrac{j^2}{b} \mid j = 0, \dots, \tfrac{\lvert b \rvert -1}{2}\}\cup \{1-\tfrac{b}{4}\} &\text{ if $b$ is odd,} \\ \{\tfrac{j^2}{b} \mid j = 0, \dots, \tfrac{\lvert b \rvert}{2}\} & \text{ if $b$ is even.}\end{array} \right.
  \end{align*}
\end{prop}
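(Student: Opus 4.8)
The plan is to pass to the fundamental group of $M^b$ and analyse its $\SU(2)$-representation variety directly, and then to evaluate the Chern--Simons action on each component. By the change-of-coordinate principle we may take $\gamma=\mu$, so $M^b=T_{t_\mu^b}$, and the mapping-torus structure together with the action of $t_\mu^b$ on $H_1(\Sigma_1)$ gives
\[
  \pi_1(M^b)=\langle \mu,\nu,t \mid [\mu,\nu]=1,\ t\mu t^{-1}=\mu,\ t\nu t^{-1}=\mu^{-b}\nu \rangle .
\]
In particular $\mu$ is central, $H_1(M^b)\cong\bbZ^2\oplus\bbZ/b$, and $M^b$ is the circle bundle over $T^2$ of Euler number $\pm b$ -- a fact I would record for later. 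A flat $\SU(2)$-connection is a conjugacy class of homomorphism $\rho$; writing $A=\rho(\mu)$, $B=\rho(\nu)$, $T=\rho(t)$, the relations become: $A,B$ commute, $T$ commutes with $A$, and $TBT^{-1}=A^{-b}B$, i.e. $[T,B]=A^{-b}$.

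Next I would carry out the case analysis according to the conjugacy class of $A$. If $A\neq\pm I$, then $B$ and $T$ lie in the maximal torus centralising $A$, so $[T,B]=I$ and hence $A^b=I$. Thus $\tr A$ is forced into the finite set $\{2\cos(2\pi j/b):j=0,\dots,\lfloor|b|/2\rfloor\}$; being continuous, it is constant on each component of $\calM$, and one analyses each admissible value. For $A$ conjugate to $\diag(e^{2\pi ij/b},e^{-2\pi ij/b})$ with $2j\not\equiv0\pmod b$, both $B$ and $T$ must lie in the (unique) maximal torus containing $A$, and the residual gauge group is exactly that torus, acting trivially; one gets a $2$-torus, and counting over $j$ (with $j$ and $|b|-j$ giving the same conjugacy class) produces $\tfrac{|b|-1}2$ copies of $T^2$ for $b$ odd and $\tfrac{|b|}2-1$ for $b$ even. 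For $A=I$ the relations say exactly that $B$ and $T$ commute, so this locus is the pillowcase $(S^1\times S^1)/(\bbZ/2)$; for $b$ even, $A=-I$ gives a second pillowcase in the same way. For $b$ odd and $A=-I$ one has $[T,B]=-I$, which forces $\tr B=0$ and forces $T$ to conjugate $B$ to $B^{-1}$; a short computation shows the centraliser of $B$ acts transitively on the resulting circle of admissible $T$'s, so the locus is a single point carrying a non-abelian -- hence irreducible -- representation. In all other cases the representations are abelian, hence reducible, which also settles the irreducibility claims.

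Finally, the Chern--Simons action is locally constant on $\calM$, so it is a single number on each component; it remains to identify these numbers. On the abelian components the connection is conjugate to $\diag(\chi,\bar\chi)$ for a $U(1)$-character $\chi$, and using the circle-bundle (equivalently, a surgery) presentation of $M^b$ one identifies $\CS_{\SU(2)}(\diag(\chi,\bar\chi))$ with the self-linking of the corresponding torsion class in $H_1(M^b)$, up to a fixed sign and normalisation. Since the linking form on the torsion subgroup $\bbZ/b\subset H_1(M^b)$ is $x\mapsto\pm x^2/b$, this gives $\CS=j^2/b\bmod1$ on the component labelled by $j$ -- consistent with $j^2/b\equiv(|b|-j)^2/b\pmod1$, which matches the $j\leftrightarrow|b|-j$ identification, while distinct $j$'s may still give the same value, hence the ``not necessarily distinct'' in the statement. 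For the isolated irreducible point, present only for $b$ odd, I would compute separately: decomposing $M^b$ along the fibre torus $\Sigma_1$ and using, in the spirit of Jeffrey and of Kirk--Klassen, the relation between the Chern--Simons invariant of a mapping-torus connection and the prequantum line bundle over the moduli space of flat connections on $T^2$ (the pillowcase); alternatively, a direct surgery computation with the explicit holonomies $A=-I$, $B=\diag(i,-i)$, $T$ off-diagonal. Either way one should obtain $1-\tfrac b4$.

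I expect this last step -- the Chern--Simons computation -- to be the main obstacle. Fixing the orientation and normalisation conventions so that the abelian values come out as $+j^2/b$ rather than $-j^2/b$ already requires care, but the genuinely delicate point is the irreducible component: the naive ``$j=b/2$'' extrapolation of the abelian formula gives $b/4$, whereas the true value $1-\tfrac b4$ differs from it by $\tfrac12$ when $b$ is odd, so this value has to be pinned down by an honest computation (one may sanity-check it against the phase $\exp(2\pi i r(-b/4))$ appearing in Corollary~\ref{stnudenlinks}). By contrast the representation-variety bookkeeping -- counting the tori, matching each component to its label $j$, and verifying the listed values of $\CS$ -- is routine.
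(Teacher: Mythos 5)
Your analysis of the representation variety is essentially the paper's own argument: both pass to the presentation $\pi_1(M^b)=\langle\alpha,\beta,\delta\rangle$ of the mapping torus, run a case analysis on whether the holonomy $A$ of the fibre-torus generator is central, and arrive at the same component count (the paper phrases the exclusion of the off-diagonal normaliser slightly differently, via $C\in N(T)\setminus T$ forcing $A=A^{-1}$, but the content is identical), the same identification of the single point for $b$ odd as the unique irreducible class, and the same reducibility criterion. Where you genuinely diverge is the evaluation of the Chern--Simons action. The paper does not use the linking form: it defers to Jeffrey's reciprocity machinery, carried out in Proposition~\ref{suncs}, where for a lift $(A,B)\in\frakt\oplus\frakt$ of the holonomies one sets $wU(A,B)-(A,B)=(\lambda,\mu)\in\Lambda^R\oplus\Lambda^R$ and evaluates
\begin{align*}
  \CS = \tfrac{1}{2}\left(\langle A,\mu\rangle-\langle B,\lambda\rangle\right)-\tfrac{1}{4}\left((-1)^{\tr(\lambda\mu)}-1\right),
\end{align*}
the theta-characteristic term being precisely what produces the extra $\tfrac12$ at the irreducible point. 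Your route -- deforming each abelian component to a representation factoring through the torsion of $H_1(M^b)\cong\bbZ^2\oplus\bbZ/b$ and reading off $j^2/b$ from the self-linking form, then treating the isolated point by a Kirk--Klassen/surgery argument -- is a legitimate and arguably more topological alternative, and it has the virtue of explaining \emph{why} the abelian phases are quadratic in $j$.

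The one place your proposal falls short of a proof is the step you yourself flag: the value $1-\tfrac b4$ at the irreducible point is asserted, not derived, and the normalisation matching $\CS_{\SU(2)}(\diag(\chi,\bar\chi))$ to $+j^2/b$ rather than $-j^2/b$ is likewise left open. These are exactly the points the paper settles by the explicit formula above (the sign issue is absorbed into the substitution $m=-b$, and the $\tfrac12$ shift comes from $(-1)^{\tr(\lambda\mu)}=-1$ when $b$ is odd). If you carry out either the Kirk--Klassen computation or specialise Jeffrey's formula as in Proposition~\ref{suncs}, your argument closes completely; your sanity check against the phase $\exp(-\pi irb/2)$ in Corollary~\ref{stnudenlinks} is the right consistency test but cannot substitute for the computation, since the asymptotic expansion is what the proposition is ultimately used to verify.
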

In this statement, the term \emph{pillowcase} refers to the space $I \times I/(x \sim -x)$, which is also the $\SU(2)$-moduli space of the torus and as a topological space is homeomorphic to $S^2$.
\begin{proof}
We can describe $\calM$ by describing the representations of $\pi_1(T_{t_\gamma^b})$. In the following, we consider $m = -b$ as this lowers the total number of signs.

It is well-known (see e.g. \cite{Jef}) that for a mapping torus $T_\phi$, $\phi : \Sigma \to \Sigma$, the fundamental group is given by the twisted product
\begin{align*}
  \pi_1(T_\phi) = \bbZ \rtimes \pi_1(\Sigma),
\end{align*}
where $\bbZ$ acts on $\pi_1(\Sigma)$ via $\phi$. In our special case, the fundamental group therefore has the presentation
  \begin{align*}
    \pi_1(T_{t_\gamma^b}) = \langle \alpha, \beta, \delta \mid \alpha \beta = \beta \alpha, \delta \alpha \delta^{-1} = \alpha, \delta \beta \delta^{-1} = \alpha^m \beta \rangle.
  \end{align*}
  Here, we simply note that two essential closed curves are homotopic if and only if they are isotopic (see e.g. \cite[Prop.~1.10]{FM}), and we have simply let $\alpha$ be the homotopy class of any curve representing $\gamma$ and choose $\beta$ so that $i(\alpha,\beta) = 1$. The moduli space of flat connections is identified with a quotient of a subset of $\SU(2)^{\times 3}$ as
  \begin{align*}
    \calM \isom \{ (A,B,C) \in \SU(2)^{\times 3} \mid AB = BA, CAC^{-1} = A, CBC^{-1} = A^mB \}/\sim,
  \end{align*}
  where $\sim$ denotes simultaneous conjugation. Since $A$ and $B$ commute for $[(A,B,C)] \in \calM$, they both lie in the same maximal torus in $\SU(2)$, and by conjugating them simultaneously we may assume that they are both diagonal. In other words, they are both elements of $T := \U(1) \subseteq \SU(2)$. Here, for $a \in \U(1)$, we simply write $a$ for the matrix $\diag(a,\bar{a})$ in $\SU(2)$. We now consider three cases.
  
  \emph{Case 1}. Assume that $A,B \in Z(\SU(2))$. In this case, $B = A^m B$, so $A^m = 1$, and so $A$ must be the identity if $m$ is odd.
  
  \emph{Case 2}. Assume that $A \notin Z(\SU(2))$. Then $C \in N(T)$, where $N(T)$ is the normalizer of $T$, which is given by $N(T) = T \cup L$, where
  \begin{align*}
    L = \left\{ \left( \begin{matrix} 0 & \exp(2\pi it) \\ -\exp(-2\pi it) & 0 \end{matrix} \right) \relmiddle| t \in \bbR \right\}.
  \end{align*}
  If $C \in T$ then $B = BA^m$, and $A^m = 1$ is the only restriction. If $C \in N(T) \setminus T = L$, conjugation by $C$ corresponds to inversion of elements of $T$. Thus, for $C \in L$, we have $A^{-1} = A$ contradicting that $A \notin Z(\SU(2))$.
  
  \emph{Case 3}. Assume that $A \in Z(\SU(2)), B \notin Z(\SU(2))$. Again, $C \in N(T)$. If $C \in T$ we find again that $A^m = 1$, so $A = 1$ if $m$ is odd. If $C \in L$, then $B^{-1} = BA^m$, and $B^2 = A^{-m} = A^m$, which is impossible for $m$ even when $B \notin Z(\SU(2))$, but for $m$ odd, and $A = -1$, we get a contribution for $B = \pm i$.
  
  In conclusion, when $m$ is odd,
  \begin{align*}
    \calM \isom&\, ((\{1\} \times \{\pm 1\} \times \SU(2)) \cup (\{\exp(2\pi ij/m) \mid j = 0, \dots, \lvert m\rvert -1 \} \setminus \{1\} \times T \times T) \\
      &\cup (\{1 \} \times T \setminus \{\pm 1 \} \times T) \cup (\{ -1 \} \times \{ \pm i \} \times L))/\sim,
  \end{align*}
  and when $m$ is even,
  \begin{align*}
    \calM \isom&\, ((\{ \pm 1 \} \times \{ \pm 1 \} \times \SU(2)) \cup (\{\ \exp(2\pi ij/m) \mid j = 0, \dots, \lvert m\rvert -1 \} \setminus \{\pm 1\} \times T \times T) \\
      &\cup (\{\pm 1 \} \times T \setminus \{\pm 1 \} \times T))/ \sim.
  \end{align*}
  
  In the case where $m$ is odd, the last component is a union of two copies of $T$ where all points are identified under conjugation since
  \begin{align*}
    \left( \begin{matrix} 0 & \exp(2\pi is) \\ -\exp(-2\pi is) & 0 \end{matrix} \right) &\left( \begin{matrix} 0 & \exp(2\pi it) \\ -\exp(-2\pi it) & 0 \end{matrix} \right) \left( \begin{matrix} 0 & \exp(2\pi is) \\ -\exp(-2\pi is) & 0 \end{matrix} \right)^{-1} \\
      &= \left( \begin{matrix} 0 & \exp(4\pi i s - 2\pi it) \\ -\exp(-4\pi is + 2\pi i t) & 0 \end{matrix} \right).
  \end{align*}
  This is the single point component of $\calM$. If $m$ is odd or even, for the quotients of the first and third component in the above description, it suffices to consider the quotient of $\{1\} \times T \times T$ or $\{\pm 1\} \times T \times T$ respectively, since we may first identify any element of $\SU(2)$ with its diagonalization. Now $T \times T$ is identified with the torus $\bbR^2 / \bbZ^2$, and the only conjugation action left is the action by the Weyl group $W \isom \bbZ_2$ which acts on $\bbR^2 / \bbZ^2$ by $(t,s) \mapsto (-t,-s)$. Therefore, $\calM$ contains one or two of the pillowcase in the cases $m$ odd or even respectively.
  
  Finally, let $j \in \{0, \dots, \lvert m\rvert -1\}$, and assume that $j/m \notin \{ 0, \tfrac{1}{2}\}$. Arguing as above, the only conjugation action left on $\{\exp(2\pi i j/m)\} \times T \times T$ is that of the Weyl group. Now, in this case, it acts non-trivially on the first factor, mapping $\exp(2\pi i j/m)$ to $\exp(-2\pi i j/m)$, and the resulting quotient becomes a number of copies of $T \times T$ as claimed.
  
  Finding the values of the Chern--Simons action for $G = \SU(2)$ is a well-studied problem, and in our case, the values on the components can be found immediately using e.g. methods of \cite{Jef}. The result can also be seen as a special case of Proposition~\ref{suncs}, where we elaborate on the available techniques.

  The claim about reducibility follows from the fact that an $\SU(2)$-connection is reducible if and only if the corresponding representation has image contained in a maximal torus, which is the case for all representations above but the one mapping $C$ into $L$.
\end{proof}

\begin{cor}
  \label{su2cs}
  The asymptotic expansion conjecture holds for $M^b$ and $\tilde{M}^b$ for all $b \in \bbZ$, $b \not = 0$.
\end{cor}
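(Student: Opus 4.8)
The plan is to read the asymptotic expansion directly off the closed formula of Corollary~\ref{stnudenlinks} (equivalently, off Theorem~\ref{stnmedlinks} with $j=0$), and to check that the phases occurring in it are exactly the exponentials of the Chern--Simons values computed in Proposition~\ref{su2modulirum}. Since $r = k+2 \to \infty$, the only ingredient of that formula which is neither an oscillatory factor $\exp(2\pi i r c)$ nor a constant is the prefactor $\exp(\pi i b/(2r))$; this is analytic at $1/r = 0$ and expands as the convergent series $\sum_{l\ge 0}\frac{1}{l!}\left(\frac{\pi i b}{2}\right)^l r^{-l}$, in particular as a power series in $r^{-1/2}$. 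Hence the whole expression is a finite sum of terms of the form $\exp(2\pi i r c)\,r^{d}\cdot(\text{convergent power series in } r^{-1/2})$ with $d\in\{0,\tfrac12\}$, and for such a sum the estimates demanded by Conjecture~\ref{asympconjec} are automatic: truncating each series at level $L$ leaves an error of order $O(r^{d-(L+1)/2})$ with $d = \tfrac12$.

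Concretely, I would first regroup the Gauss sum $\sum_{n=0}^{\abs{b}-1}\exp(2\pi i r\,n^2/b)$ appearing in Corollary~\ref{stnudenlinks} according to the value of $n^2/b$ modulo $\bbZ$, writing it as $\sum_c S_c\exp(2\pi i r c)$ with $S_c = \#\{\,0\le n\le\abs{b}-1 : n^2/b\equiv c \pmod{1}\,\}$, the sum running over the finite set of residues $c$ so obtained. Multiplying each such term by $\sqrt{r/(2\abs{b})}\exp(-\pi i\,\sgn(b)/4)$ and by the prefactor series yields a contribution to the expansion with $d_j = \tfrac12$, $b_j = S_{c_j}\exp(-\pi i\,\sgn(b)/4)/\sqrt{2\abs{b}}\ne 0$, and $a_j^l$ read off from $\exp(\pi i b/(2r))$. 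The two remaining summands in Corollary~\ref{stnudenlinks} are the constant $-\tfrac12$, carrying the trivial phase $c = 0$ with exponent $0$, and $-\tfrac12\exp(-\pi i r b/2) = -\tfrac12\exp(2\pi i r(1-b/4))$, carrying the phase of $1-b/4$ with exponent $0$. Where the phase of one of these coincides with a phase arising from the Gauss sum, I would add the contributions; the $r^{1/2}$ piece then dominates, so the combined exponent remains $\tfrac12$ and the extra $r^0$ piece only alters the $a_j^l$.

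It remains to identify the collection of phases $\{\,n^2/b \bmod \bbZ\,\}\cup\{0\}\cup\{1-b/4\bmod\bbZ\}$ thus produced with $\mathrm{CS}(\calM)$ from Proposition~\ref{su2modulirum}, so that the $c_j$ of Conjecture~\ref{asympconjec} are genuinely Chern--Simons values and $c_0 = 0$ occurs (from the $n = 0$ summand and from $-\tfrac12$). Using $n^2\equiv(\abs{b}-n)^2\pmod{\abs{b}}$ one checks that $\{\,n^2/b\bmod\bbZ : 0\le n\le\abs{b}-1\,\}$ equals $\{\,j^2/b\bmod\bbZ : 0\le j\le\abs{b}/2\,\}$, and that $1-b/4$ is a genuinely new value precisely when $b$ is odd, whereas for $b$ even it reduces mod $\bbZ$ to $b/4\equiv(\abs{b}/2)^2/b$, already present; this is exactly the dichotomy of Proposition~\ref{su2modulirum}. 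Consequently $d_j = \tfrac12$ at every Chern--Simons value except, for $b$ odd, at $1-b/4$, where $d_j = 0$, and in all cases $d = \max_j d_j = \tfrac12$.

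The substantive work --- the reciprocity computation behind Corollary~\ref{stnudenlinks} and the explicit moduli space of Proposition~\ref{su2modulirum} --- being in hand, there is no real obstacle; the only step needing care is the bookkeeping of the previous two paragraphs, especially the case analysis in $b\bmod 4$ that decides whether $1-b/4$ contributes a new phase, and the computation of the multiplicities $S_c$, which fix the leading coefficients $b_j$ (hence feed into Theorem~\ref{growthratesu2}) but are irrelevant to the mere existence of the expansion. The $2$-framing dependence of Remark~\ref{framings} only multiplies $Z_k$ by a root of unity and is thus harmless, and $\tilde M^b$ is covered simultaneously since $Z_k^{\SU(2)}(M^b) = Z_k^{\SU(2)}(\tilde M^b)$ by Corollary~\ref{stnudenlinks}. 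I would record the resulting $c_j$, $d_j$, $b_j$, $a_j^l$ explicitly.
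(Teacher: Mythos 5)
Your proposal is correct and follows essentially the same route as the paper: both read the expansion off Corollary~\ref{stnudenlinks}, regroup the Gauss sum via the symmetry $n\mapsto\abs{b}-n$ so that the phases become $\exp(2\pi i r j^2/b)$ for $0\le j\le\abs{b}/2$, absorb the terms $-\tfrac12$ and $-\tfrac12\exp(-\pi i rb/2)$ into the phases $0$ and $1-b/4$ with the same even/odd dichotomy in $b$, Taylor-expand the prefactor $\exp(\pi i b/(2r))$, and match against Proposition~\ref{su2modulirum}. The only cosmetic difference is that you organize the Gauss sum by residues with multiplicities $S_c$ rather than writing out the regrouped sums explicitly.
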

\begin{proof}
  As in the proof of Theorem~\ref{stnmedlinks}, we note that
  \begin{align}
    \label{omskrivningexp}
    \exp(2\pi ir((b-1)-(j-1))^2/b) = \exp(2\pi irj^2/b),
  \end{align}
  for $j = 0, \dots, \lvert b\rvert-1$. To prove the corollary, it is now a matter of rearranging the terms in the formula for $Z_k(T_{t_\gamma^b})$, obtained in Corollary~\ref{stnudenlinks}.
  
  Assume first that $b$ is even. In this case,
  \begin{align*}
    \exp(-\pi i rb/2) = \exp(\pi i rb/2) = \exp\left(2\pi i r \left(\frac{\lvert b \rvert}{2}\right)^2\frac{1}{b}\right),
  \end{align*}
  and it follows from \eqref{omskrivningexp} that
  \begin{align*}
    Z_k(M^b) =&\, \exp\left(\frac{\pi i b}{2r}\right) \Bigg(\sqrt{\frac{r}{2\lvert b \rvert}} \exp(- \sgn(b) \pi i /4) \left(2 \sum_{n=1}^{{\lvert b \rvert}/2 - 1} \exp(2\pi i rn^2/b) \right.  \\
      & \left.\phantom{\exp\left(\frac{\pi i b}{2r}\right) \left( \right)}+ 1 + \exp(\pi i b r/2) \right) - \frac{\exp(-\pi i rb/2)}{2} - \frac{1}{2}\Bigg) \\
     =&\, \exp\left(\frac{\pi i b}{2r}\right) \Bigg( \sum_{n=1}^{\lvert b \rvert/2 - 1} \exp(2\pi i rn^2/b)\left[ \sqrt{\frac{2r}{\lvert b \rvert}} \exp(-\sgn(b) \pi i /4) \right] \\
     &  \phantom{\exp\left(\frac{\pi i b}{2r}\right) \left( \right) } \,+\exp(2\pi i r\cdot 0/b) \left[ \sqrt{\frac{r}{2\lvert b \rvert}} \exp(-\sgn(b) \pi i/4) - \frac{1}{2} \right] \\
      & \phantom{\exp\left(\frac{\pi i b}{2r}\right) \left( \right) } \, +\exp\left(2\pi i r\left( \frac{\lvert b \rvert}{2} \right)^2 \frac{1}{b}\right) \left[\sqrt{\frac{r}{2 \lvert b \rvert}} \exp(-\sgn(b)\pi i /4) - \frac{1}{2} \right] \Bigg). 
  \end{align*}
  Now, one obtains the full asymptotic expansion of $Z_k(M^b)$ by introducing the Taylor series for $\exp(\pi i b/(2r))$ and $1/\sqrt{r}$, and one obtains the conjecture by comparing the resulting expression with the result of Proposition~\ref{su2modulirum}. For $b$ odd, the exact same argument shows that
  \begin{align*}
    Z_k(M^b) =&\, \exp\left(\frac{\pi i b}{2r}\right) \Bigg( \sum_{n=1}^{(\lvert b \rvert -1)/2} \exp(2\pi i rn^2/b) \left[ \sqrt{\frac{2r}{\lvert b \rvert}} \exp(-\sgn(b)\pi i/4) \right]  \\
     & + \, \exp(2\pi i r 0/b)\left[ \sqrt{\frac{r}{2\lvert b \rvert}} \exp(-\sgn(b)\pi i/4) - \frac{1}{2}\right] - \exp(-\pi i rb/2) \frac{1}{2} \Bigg),
  \end{align*}
  and once again the claim follows from Proposition~\ref{su2modulirum}.
  
  The argument for $\tilde{M}^b$ is identical as one finds the exact same Chern--Simons values for these manifolds. We omit the details.
\end{proof}

Note that the proof of Corollary~\ref{su2cs} gives us explicitly the leading order term of $Z_k(M^b)$, and in particular we are now able to turn to Conjecture~\ref{growthrateconjec} for $M^b$.

\subsection{The growth rate conjecture}
The main technical tool in proving the growth rate conjecture for the manifolds $M^b$ and $\tilde{M}^b$ is the correspondence between twisted deRham cohomology and group cohomology which we briefly recall.

Let $G$ be any group. A \emph{$G$-module} is an abelian group $N$ with a left action of $G$. The elements of $N$ invariant under the action will be denoted $N^G$. A \emph{cocycle on $G$ with values in $N$} is a map $u : G \to N$ satisfying the cocycle condition
\begin{align*}
  u(gh) = u(g) + gu(h).
\end{align*}
A \emph{coboundary} is a cocycle of the form $g \mapsto \delta m(g) := m - gm$ for some $m \in N$. The set of cocycles is denoted $Z^1(G,N)$, and the set of coboundaries is denoted $B^1(G,N)$. We define the first cohomology group of $G$ with coefficients of $N$ as the quotient
\begin{align*}
  H^1(G,N) = Z^1(G,N) / B^1(G,N).
\end{align*}
Notice that an element of $N$ satisfies $\delta m \equiv 0$ exactly when $m \in N^G$. We are led to define
\begin{align*}
  H^0(G,N) = N^G.
\end{align*}
Now, let $P \to M$ be a principal $G$-bundle over a closed connected oriented 3-manifold $M$, $G = \SU(N)$, and let $[A]$ be the gauge equivalence class of a flat connection in $P$, represented by a representation $\rho \in \Hom(\pi_1(M),G)$. The representation $\rho$ defines a $\pi_1(M)$-module structure on $\frakg = \Lie(G)$ through the composition $\Ad \circ \rho : \pi_1(M) \to \Aut(\frakg)$. The following result is well-known.
\begin{lem}
  \label{groupcohomologythm}
  If $M$ has contractible universal covering space, there are isomorphisms
  \begin{align*}
    H^0(M,\Ad_P) \isom H^0(\pi_1(M),\frakg), \quad H^1(M,\Ad_P) \isom H^1(\pi_1(M),\frakg).
  \end{align*}
\end{lem}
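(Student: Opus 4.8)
The plan is to identify the twisted de Rham cohomology $H^\bullet(M,\Ad_P)$ with the cohomology of $M$ in the local system determined by $\Ad\circ\rho$, and then to exploit that $M$ is a $K(\pi_1(M),1)$. Concretely, the flat connection $A$ equips $\Ad_P$ with a flat structure, hence with a locally constant sheaf of coefficients $\underline{\frakg}_\rho$ with monodromy $\Ad\circ\rho : \pi_1(M)\to\Aut(\frakg)$, and the de Rham theorem with local coefficients gives $H^i(M,\Ad_P)\isom H^i(M;\underline{\frakg}_\rho)$ for all $i$. Since the universal cover of $M$ is contractible, $M$ is an Eilenberg--MacLane space $K(\pi_1(M),1)$, and the singular cohomology of such a space with coefficients in any $\pi_1$-module coincides with the group cohomology of $\pi_1(M)$; applied to $N=\frakg$ this yields $H^i(M;\underline{\frakg}_\rho)\isom H^i(\pi_1(M),\frakg)$, and composing the two isomorphisms proves the lemma in every degree, in particular for $i=0,1$. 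One should record that the degree-$0$ case is elementary and needs only connectedness: a section of $\Ad_P$ with $d_A s=0$ is parallel, hence determined by its value at the basepoint, which must be fixed by the holonomy $\Ad\circ\rho$, so $H^0(M,\Ad_P)\isom\frakg^{\pi_1(M)}=H^0(\pi_1(M),\frakg)$ directly.

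To make the degree-$1$ isomorphism explicit and to match it with the cocycle formalism recalled just above, I would instead argue on the universal cover $\widetilde M$. Pulling back, the flat connection becomes trivializable, since $\widetilde M$ is simply connected: fixing a point over the basepoint and parallel transporting trivializes $\Ad_{\widetilde P}$, so $(\Omega^\bullet(\widetilde M,\Ad_{\widetilde P}),d_{\widetilde A})\isom(\Omega^\bullet(\widetilde M)\otimes\frakg,d\otimes\id)$, equivariantly for the $\pi_1(M)$-action on $\widetilde M$ by deck transformations and on $\frakg$ by $\Ad\circ\rho$. Given $[\alpha]\in H^1(M,\Ad_P)$, view its pullback $\tilde\alpha$ as a closed $\frakg$-valued $1$-form on the contractible manifold $\widetilde M$; by the Poincar\'e lemma $\tilde\alpha=df$ for some $f:\widetilde M\to\frakg$, unique up to an additive constant in $\frakg$. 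Equivariance of $\tilde\alpha$ forces $u(\gamma):=f(\gamma\cdot x)-\Ad(\rho(\gamma))f(x)$ to be independent of $x$; one then checks that $u$ satisfies the cocycle identity $u(\gamma\delta)=u(\gamma)+\gamma\cdot u(\delta)$, that changing $f$ by a constant changes $u$ by a coboundary, and that changing $\alpha$ within its $d_A$-cohomology class does the same, so $[\alpha]\mapsto[u]$ is well defined; a reverse construction, solving $df=\tilde\alpha$ from a prescribed cocycle, shows it is a bijection, giving $H^1(M,\Ad_P)\isom H^1(\pi_1(M),\frakg)$.

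The routine content is entirely in the second paragraph, and the only genuine care points are bookkeeping: pinning down the flat trivialization on $\widetilde M$ so that the induced $\pi_1(M)$-action on $\frakg$ is exactly $\Ad\circ\rho$ and not its inverse or contragredient, and tracking signs so that the resulting $u$ obeys the normalization $u(\gamma\delta)=u(\gamma)+\gamma u(\delta)$ used in the statement rather than the opposite one — this comes down to a consistent choice of left versus right actions and of the sign convention in $d_A=d+[A,\cdot\,]$. I expect no conceptual obstacle, as the result is classical; the write-up can therefore either simply cite the de Rham theorem with local coefficients together with the $K(\pi,1)$ identification, or spell out the universal-cover argument above in the degrees $0$ and $1$ that are actually needed.
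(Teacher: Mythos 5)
The paper does not actually prove this lemma --- it is stated as ``well-known'' with no proof supplied --- so there is nothing internal to compare against; your job here is to certify that the standard argument works, and it does. Your two-step plan (twisted de Rham cohomology $\isom$ cohomology of the local system $\underline{\frakg}_\rho$, then the $K(\pi,1)$ identification with group cohomology) is exactly the classical route, and your explicit universal-cover construction of the degree-one isomorphism $[\alpha]\mapsto[u]$ is correct: the cocycle identity, the effect of shifting $f$ by a constant, and injectivity all check out with the paper's normalization $u(gh)=u(g)+gu(h)$. The one compressed spot is surjectivity (``a reverse construction, solving $df=\tilde\alpha$ from a prescribed cocycle''): producing an $f:\widetilde M\to\frakg$ with prescribed equivariance defect $u$ requires a small construction (e.g.\ a $\pi_1$-partition of unity on $\widetilde M$, or invoking that $H^1$ of any connected space with local coefficients equals $H^1$ of its fundamental group), but this is routine and classical. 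Worth recording: just as you observe for degree $0$, the degree-$1$ isomorphism also needs only connectedness of $M$, since $H^1$ with local coefficients depends only on $\pi_1$; the asphericity hypothesis is only genuinely needed for $H^i$ with $i\geq 2$, which the lemma does not claim. No gap.
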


Consider the case where $M$ is the mapping torus of a homeomorphism of a surface of genus $g \geq 1$. Let $A$ be a flat connection in $P \to M$, and let $\rho$ be a representative of $[A]$ in the moduli space $\Hom(\pi_1(M),\SU(2))/\SU(2)$. The elements of $\su(2)$ fixed by the action of $\pi_1(M)$ given by $\Ad \circ \rho$ are exactly those in the centralizer of the image $\rho(\pi_1(M))$, and so by Lemma~\ref{groupcohomologythm},
\begin{align*}
  h^0_A = \dim \Lie(Z(\rho(\pi_1(M)))).
\end{align*}
Similarly, Lemma~\ref{groupcohomologythm} gives a description of $h^1_A$ using only the corresponding representation of $\pi_1(M)$.

\begin{thm}
  \label{growthratesu2}
  Let $\calM_{j/b}$, $j = 0, \dots, \big\lceil \tfrac{\lvert b\rvert +1}{2} \big\rceil$, and $\calM_{-b/4}$ be the components of the moduli space of $M^b$ arising from Proposition~\ref{su2modulirum}, and let
  \begin{align*}
    d_i' = \frac{1}{2} \max_{[A] \in \calM_i} (h^1_A - h^0_A),
  \end{align*}
  where the $\max$ is as in Conjecture~\ref{growthrateconjec}. Then $d_{j/b}' = \tfrac{1}{2}$ and $d_{-b/4}' = 0$. In particular, Conjecture~\ref{growthrateconjec} holds true in this case.
\end{thm}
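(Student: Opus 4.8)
The plan is to pass from twisted de Rham cohomology to group cohomology, to compute the latter on the explicit families of representations produced in the proof of Proposition~\ref{su2modulirum}, and finally to match the resulting exponents against the leading orders extracted in the proof of Corollary~\ref{su2cs}.

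Since $M^b$ is a mapping torus of a homeomorphism of $T^2$, it is aspherical, so by Lemma~\ref{groupcohomologythm} we may replace $h^i_A$ by $\dim H^i(\pi_1(M^b),\su(2))$, where $\pi_1(M^b)=\bbZ^2\rtimes\bbZ$ acts through $\Ad\circ\rho$. The value $h^0_A=\dim\Lie(Z_{\SU(2)}(\rho(\pi_1(M^b))))$ can be read off directly from the classification in Proposition~\ref{su2modulirum}: on the torus and pillowcase components a generic representative $\rho$ has image a non-central abelian subgroup of a maximal torus $T$, so its centralizer is $T$ and $h^0_A=1$; on the single-point component (present when $b$ is odd) the image is a finite non-abelian (quaternion) subgroup, its centralizer is $\{\pm1\}$, and $h^0_A=0$. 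Throughout, ``generic'' means on a Zariski-open dense subset of the component, which is the level of generality required by Conjecture~\ref{growthrateconjec}; in particular the upward jumps of $h^1_A-h^0_A$ along lower-dimensional strata --- for instance the value $3$ at the four orbifold points of a pillowcase, where $\rho$ has central image and $\Ad_P$ is the trivial bundle $M^b\times\su(2)$ --- do not enter the maximum, a proper subvariety never being Zariski-open.

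For $h^1_A$ I would invoke the Wang exact sequence of the fibration $T^2\hookrightarrow M^b\to S^1$ (equivalently, the Lyndon--Hochschild--Serre sequence of $1\to\bbZ^2\to\pi_1(M^b)\to\bbZ\to1$, which degenerates since $\bbZ$ has cohomological dimension $1$):
\[
0 \to \coker\bigl(\Theta-1 \mid H^0(T^2,\su(2))\bigr) \to H^1(M^b,\su(2)) \to \ker\bigl(\Theta-1 \mid H^1(T^2,\su(2))\bigr) \to 0 ,
\]
where $T^2$ carries $\su(2)$ with the restricted $\pi_1(T^2)$-action, $\Theta$ denotes the monodromy $(t_\mu^b)^{*}$ twisted by $\Ad\rho(\delta)$, and $h^0_A=\dim\ker(\Theta-1\mid H^0(T^2,\su(2)))$ as well. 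On a Zariski-open dense subset of each component the complement of $\h$ in $\su(2)$ carries a non-trivial representation of $\pi_1(T^2)$ (because $\rho(\alpha)$, or else $\rho(\beta)$, is non-central there), hence is acyclic on $T^2$, so $H^*(T^2,\su(2))\cong H^*(T^2,\bbR)\otimes\h$, of dimensions $1,2,1$, with $\h$ the Cartan subalgebra; moreover $\Theta$ respects this decomposition, acting on $\h$ by $+1$ if $\rho(\delta)\in T$ and by $-1$ if $\rho(\delta)\in N(T)\setminus T$, and acting on $H^1(T^2,\bbR)\cong\bbR^2$ by a unipotent matrix whose unique off-diagonal entry is $\pm b\neq 0$.

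The computation then splits into two cases. On the torus and pillowcase components $\rho(\delta)\in T$, so $\Theta-1$ vanishes on $H^0(T^2,\su(2))=\h$ (the first term above has dimension $1$) and equals a unipotent matrix minus the identity on $H^1(T^2,\su(2))=H^1(T^2,\bbR)$, which has rank $1$ precisely because $b\neq0$; hence $h^1_A=1+1=2$ and $h^1_A-h^0_A=1$, so $d_{j/b}'=\tfrac12$. On the single-point component $\rho(\delta)\in N(T)\setminus T$, so $\Theta-1$ equals $-2$ on $\h$ (invertible, in accord with $h^0_A=0$) and equals $-U-1$ on $H^1(T^2,\bbR)$ for a unipotent $U$, which has determinant $4$ and is invertible; hence $h^1_A=0$ and $h^1_A-h^0_A=0$, so $d_{-b/4}'=0$. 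Finally, the proof of Corollary~\ref{su2cs} exhibits the term of $Z_k^{\SU(2)}(M^b)$ attached to each Chern--Simons value: the contributions from the values $j^2/b$ are of order $r^{1/2}$ and the contribution from $1-b/4$ is of order $r^0$, so $d_{j/b}=\tfrac12$ and $d_{-b/4}=0$; comparing with the above yields $d_i=d_i'$ for every $i$, which is the assertion of Conjecture~\ref{growthrateconjec}. The main obstacle I anticipate is pinning down $\Theta$ on $H^1(T^2,\su(2))$ precisely --- in particular verifying that its off-diagonal entry is the non-zero integer $\pm b$ --- since this is exactly where the hypothesis $b\neq0$ is used and is what forces $\rank(\Theta-1)=1$ rather than $0$.
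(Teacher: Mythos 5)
Your argument is correct and reaches the same generic values of $h^0_A$ and $h^1_A$ as the paper, but by a genuinely different route. The paper works directly with the presentation $\langle\alpha,\beta,\delta\mid R_1,R_2,R_3\rangle$ of $\pi_1(M^b)$: it identifies $Z^1(\pi_1(M^b),\su(2))$ with the kernel of an explicit $9\times 9$ matrix $R$ built from $\Ad(A)$, $\Ad(B)$, $\Ad(C)$ in the basis $e_1,e_2,e_3$ of $\su(2)$, computes $\dim\ker R$ and $\dim B^1$ case by case (finding $4$ and $2$ generically on the torus and pillowcase components, and $3$ and $3$ on the single-point component), and reads off $h^1_A$ as the difference. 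You instead factor the computation through the Wang/Lyndon--Hochschild--Serre sequence of $1\to\bbZ^2\to\pi_1(M^b)\to\bbZ\to 1$, reducing everything to the twisted monodromy $\Theta$ on $H^*(T^2,\su(2))$, and then use the splitting $\su(2)=\h\oplus\frakm$ together with the acyclicity of $\frakm$ at points where some $\rho(\alpha)$ or $\rho(\beta)$ is non-central; this collapses the problem to $1\times 1$ and $2\times 2$ determinants and makes structurally visible exactly where $b\neq 0$ is used (it forces $\rank(U-1)=1$ on $H^1(T^2,\bbR)$) and why the irreducible point has $h^1_A=0$ (the sign $-1$ of $\Ad(C)|_{\h}$ for $C\in N(T)\setminus T$ makes both $\Theta-1$ maps invertible). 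Your treatment of the genericity issue (the jump to $h^1_A-h^0_A=3$ at the central representations being excluded because a proper subvariety is never Zariski open) and the final matching against the orders $r^{1/2}$ and $r^0$ extracted in the proof of Corollary~\ref{su2cs} agree with what the paper does. The only point worth making explicit, which you use tacitly, is that on the pillowcase and torus components one generically has $\rho(\delta)\in T$ rather than $\rho(\delta)\in N(T)\setminus T$; this is supplied by Cases 2 and 3 of the proof of Proposition~\ref{su2modulirum}, which show that $C\in L$ is only compatible with the single-point component. What the paper's brute-force computation buys is independence from the fibration structure (the same template applies to any finitely presented group); what yours buys is brevity, conceptual clarity, and an argument that would generalize more readily to $\SU(N)$ by replacing $\h\oplus\frakm$ with the corresponding root-space decomposition.
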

\begin{proof}
  Again, we introduce $m = -b$. Abusing notation slightly, we write $\rho \in \calM$ for the (conjugacy class of a) representation corresponding to a (gauge class of a) flat connection in $\calM$. Let $A, B, C$ denote the images of generators $\alpha, \beta, \delta$ of $\pi_1(M^{m})$ under $\rho$. Using the remark following Lemma~\ref{groupcohomologythm}, we find that if $\rho \in \calM_{j/b}$, then $h^0_\rho = 1$ except in four or eight points in the cases where $m$ is odd or even respectively, those points corresponding to $A,B,C = \pm 1$. When $A,B, C = \pm 1$, we have $h^0_\rho = 3$. For the representation $\rho \in \calM_{-b/4}$, we have $h^0_\rho = 0$.
  
  We now describe $h^1_\rho$. The cocycles $Z^1(\pi_1(M^{-m}),\su(2))$ embed in $\su(2)^3$ under the map
  \begin{align*}
    u \mapsto (u(\alpha),u(\beta),u(\delta)).
  \end{align*}
  The image can be determined since cocycles map the three relators
  \begin{align*}
    R_1 = \alpha\beta \alpha^{-1}\beta^{-1}, \,\, R_2 = \alpha\delta\alpha^{-1}\delta^{-1}, \,\, R_3 = \delta\beta\delta^{-1}\alpha^{-m}\beta^{-1}
  \end{align*}
  of our presentation of $\pi_1(M^{-m})$ to $0 \in \su(2)^3$. One finds that $Z^1(\pi_1(M^{-m}),\su(2))$ can be identified with the kernel of the map $R = (\tilde{R}_1,\tilde{R}_2,\tilde{R}_3) : \su(2)^3 \to \su(2)^3$ determined by $R_1, R_2, R_3$ by the requirement that 
  \begin{align*}
    \tilde{R}_i(u(\alpha),u(\beta),u(\delta)) = u(R_i).
  \end{align*}
  Assume for simplicity that $m > 0$. Noting that in general,
  \begin{align*}
    u(g^{-1}) = -\Ad(\rho(g^{-1}))u(g),
  \end{align*}
  the cocycle condition gives
  \begin{align*}
    u(R_1) =&\,\, u(\alpha) - \Ad(B)u(\alpha) - u(\beta) + \Ad(A) u(\beta), \\
    u(R_2) =&\,\, u(\alpha) - \Ad(C)u(\alpha) - u(\delta) + \Ad(A) u(\delta), \\
    u(R_3) =&\,\, -\Ad(B)(\sum_{n=0}^m \Ad(A^n))u(\alpha) - u(\beta) \\
            &\,\,+ \Ad(C)u(\beta) + u(\delta) - \Ad(A^mB)u(\delta).
  \end{align*}
  Here, the first two equalities are immediate, and the last one follows from
  \begin{align*}
    u(R_3) =&\,\, u(\delta) + \Ad(C) u(\beta\delta^{-1} \alpha^{-m} \beta^{-1}) \\
      =&\,\, u(\delta) + \Ad(C)(u(\beta) + \Ad(B)u(\delta^{-1}\alpha^{-m}\beta^{-1})) \\
      =&\,\, u(\delta) + \Ad(C)u(\beta) + \Ad(CB)(u(\delta^{-1})+\Ad(C^{-1})u(\alpha^{-m}\beta^{-1})) \\
      =&\,\, u(\delta) + \Ad(C)u(\beta) - \Ad(CBC^{-1})u(\delta) \\
       &\, + \Ad(CBC^{-1})(u(\alpha^{-m})+\Ad(A^{-m})u(\beta^{-1})) \\
      =&\,\, u(\delta) + \Ad(C)u(\beta) - \Ad(A^mB)u(\delta) \\
       &\, - \Ad(CBC^{-1}A^{-m})u(\alpha^m) - \Ad(CBC^{-1}A^{-m}B^{-1})u(\beta) \\
      =&\,\, u(\delta) + \Ad(C)u(\beta) - \Ad(A^mB)u(\delta) - \Ad(B)u(\alpha^m) - u(\beta)
  \end{align*}
  since, in general
  \begin{align*}
    u(g^m) = \sum_{n=0}^{m-1} \Ad(\rho(g)^{n}) u(g).
  \end{align*}
  In other words, $R$ is given by
  \begin{align*}
    R&(x_1,x_2,x_3) = \left( \begin{matrix} x_1 - \Ad(B) x_1 - x_2 + \Ad(A)x_2 \\ x_1 - \Ad(C) x_1 - x_3 + \Ad(A)x_3 \\ -\Ad(B)(\sum_{n=0}^m \Ad(A^n) )x_1 - x_2 + \Ad(C) x_2 + x_3 - \Ad(A^mB)x_3 \end{matrix}\right).
  \end{align*}
  Under this identification, the coboundaries $B^1(\pi_1(M^{-m}),\su(2))$ become
  \begin{align*}
    \{(x - \Ad(A)x, x-\Ad(B)x,x-\Ad(C)x) \mid x \in \su(2)\} \subseteq \ker R \subseteq \su(2)^3.
  \end{align*}
  Recall that $\su(2)$ has a basis given by
  \begin{align*}
    \left\{ e_1 = \left( \begin{matrix} 0 & i \\ i & 0 \end{matrix} \right), e_2 = \left( \begin{matrix} 0 & -1 \\ 1 & 0 \end{matrix} \right), e_3 = \left( \begin{matrix} i & 0 \\ 0 & -i \end{matrix} \right) \right\}.
  \end{align*}
  Consider first the case $\rho \in \calM_{j/b}$. Write
  \begin{gather*}
    A = \left( \begin{matrix} \exp(2\pi ij/m) & 0 \\ 0 & \exp(-2\pi ij/m) \end{matrix} \right), \quad B = \left( \begin{matrix} \exp(2\pi i s) & 0 \\ 0 & \exp(-2\pi i s) \end{matrix} \right), \\
    C = \left( \begin{matrix} \exp(2\pi i t) & 0 \\ 0 & \exp(-2\pi i t) \end{matrix} \right)
  \end{gather*}
  for $j \in 0, \dots, \big\lceil \tfrac{m+1}{2} \big\rceil$, and $s,t \in [0,1)$. A direct computation shows that the matrix representation of $R$ in the basis given above is
  \begin{align*}
    R=\left( \begin{matrix}
      P - S(s) & -P+S(j/m) & 0 \\
      P - S(t) & 0 & -P+S(j/m) \\
      T(m,s) & -P + S(t) & P - S(s), \end{matrix} \right),
  \end{align*}
  where $P$, $S$, and $T$ are given by
  \begin{align*}
    P &= \left( \begin{matrix} 1 & 0 & 0 \\ 0 & 1 & 0 \\ 0 & 0 & 0 \end{matrix} \right), \quad S(r) = \left( \begin{matrix} \cos(4\pi r) & -\sin(4 \pi r) & 0 \\ \sin(4\pi r) & \cos(4 \pi r) & 0 \\ 0 & 0 & 0 \end{matrix} \right),\\
     T(m,s) &= \left( \begin{matrix} -\eta \cos(4\pi s) & \eta \sin(4\pi s) & 0 \\ -\eta \sin(4\pi s) & -\eta \cos(4\pi s) & 0 \\ 0 & 0 & -m \end{matrix} \right), \\
     \eta &= \sum_{n=0}^{m-1} \exp(4\pi ijn/m) = \left\{ \begin{array}{ll} 
         m,  & \mbox{if }j/m \in \{0,\tfrac{1}{2}\}, \\
         0, &\mbox{otherwise.} \end{array} \right.
  \end{align*}
  One finds that $\dim(\ker R) = 6$ when $\tfrac{j}{m},s,t \in\{0, \tfrac{1}{2}\}$ and that $\dim(\ker R) = 4$ otherwise. A similar computation shows that 
  \begin{align*}
    B^1(\pi_1(M^{-m}),\frakg) \isom \Span \left\{ \left( \begin{matrix} 1 - \cos(4\pi j/m) \\ -\sin(4\pi j/m) \\ 0 \\ 1- \cos(4\pi s) \\ -\sin(4\pi s) \\ 0 \\ 1-\cos(4\pi t) \\ -\sin(4\pi t) \\ 0 \end{matrix} \right), \left( \begin{matrix} \sin(4\pi j/m) \\ 1-\cos(4\pi j/m) \\ 0 \\ \sin(4\pi s) \\ 1-\cos(4\pi s) \\ 0 \\ \sin(4\pi t) \\ 1 -\cos(4\pi t) \\ 0 \end{matrix} \right),0 \right\},
  \end{align*}
  so the subspace of coboundaries has dimension $0$ when $\tfrac{j}{m}, s,t \in \{0, \tfrac{1}{2}\}$ and dimension $2$ otherwise. Notice that by definition of the generic $\max$, these finitely many special cases have no influence on $d_i'$.
  
  Now, consider the case of $\rho \in \calM_{-b/4}$, and write
  \begin{align*}
    A = \left( \begin{matrix} -1 & 0 \\ 0 & -1 \end{matrix} \right), \quad B = \left( \begin{matrix} i & 0 \\ 0 & -i \end{matrix} \right), \quad C = \left( \begin{matrix} 0 & 1 \\ -1 & 0 \end{matrix} \right).
  \end{align*}
  In this case, $R$ is given by
   \begin{align*}
    R= \left( \begin{matrix}
      2 & 0 & 0 & 0 & 0 & 0 & 0 & 0 & 0 \\
      0 & 2 & 0 & 0 & 0 & 0 & 0 & 0 & 0 \\
      0 & 0 & 0 & 0 & 0 & 0 & 0 & 0 & 0 \\
      2 & 0 & 0 & 0 & 0 & 0 & 0 & 0 & 0 \\
      0 & 0 & 0 & 0 & 0 & 0 & 0 & 0 & 0 \\
      0 & 0 & 2 & 0 & 0 & 0 & 0 & 0 & 0 \\
      m & 0 & 0 & -2 & 0 & 0 & 2 & 0 & 0 \\
      0 & m & 0 & 0 & 0 & 0 & 0 & 2 & 0 \\
      0 & 0 & -m & 0 & 0 & -2 & 0 & 0 & 0
      \end{matrix} \right).
  \end{align*}
  Now $\dim(\ker R) = 3$, and here we find that
  \begin{align*}
    B^1(\pi_1(M^{-m}),\frakg) \isom \Span \left\{ \left( \begin{matrix} 0 \\ 0 \\ 0 \\ 2 \\ 0 \\ 0 \\ 2 \\ 0 \\0 \end{matrix} \right), \left( \begin{matrix} 0 \\ 0 \\ 0 \\ 0 \\ 2 \\ 0 \\ 0 \\ 0 \\ 0 \end{matrix} \right) , \left( \begin{matrix} 0 \\ 0 \\ 0 \\ 0 \\ 0 \\ 0 \\ 0 \\ 0 \\ 2 \end{matrix} \right) \right\},
  \end{align*}
  so all cocycles are coboundaries. Relating $m$ to $b$, we obtain
  \begin{align*}
    d_{j/b}' &= \tfrac{1}{2}((4-2) - 1) = \tfrac{1}{2}, \\
    d_{-b/4}' &= \tfrac{1}{2}((3-3)-0) = 0.
  \end{align*}
\end{proof}

\subsection{Summary of torus bundles}
Having proved the $\SU(2)$-AEC for those torus homeomorphisms of trace $\pm 2$, we collect now what is known for torus bundles in general. Recall the trichotomy of $\MCG(S^1 \times S^1) \isom \SL(2,\bbZ)$ into elements of trace $\abs{\tr} < 2$ called periodic (or finite order), $\abs{\tr} = 2$ called reducible, and $\abs{\tr} > 2$ called hyperbolic (or Anosov). This, of course, not including the elements $\pm \id \in \SL(2,\bbZ)$ which are both finite order.

Jeffrey \cite{Jef} proves the AEC for all hyperbolic elements using the following concrete expression for the quantum invariants:
\begin{thm}[\cite{Jef}]
\label{jeffreythm}
  Let 
  \begin{align*}
    U = \left( \begin{matrix} a & b \\ c & d \end{matrix} \right) \in \Gamma_1 \isom \SL(2,\bbZ)
  \end{align*}
  and assume that $\lvert \Tr(U) \rvert > 2$. Then there exists a canonical choice of framing for $T_U$, and the quantum $\SU(2)$-invariant is given by
  \begin{align*}
    Z_k(T_U) = & \,\exp\left(\frac{2\pi i \psi(U)}{4r}\right) \sgn(d+a \mp 2) \sum_{\pm} \pm \frac{1}{2\lvert c \rvert \sqrt{\lvert d+a \mp 2 \rvert}} \\
      &\,\cdot \sum_{\beta = 0}^{\lvert c\rvert-1} \sum_{\gamma = 1}^{\lvert d +a \mp 2 \rvert} \exp \left( 2\pi i r \frac{-c \gamma^2 + (a-d)\gamma \beta + b \beta^2}{d + a \mp 2} \right),
  \end{align*}
  where $r = k+2$, and $\psi(U) \in \bbZ$ depends only on $U$ and is given by \cite[(4.4)]{Jef}.
\end{thm}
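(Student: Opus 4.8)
The plan is to compute $Z_k(T_U)=\tr\rho_k(U)$ directly, in the spirit of the proofs of Theorem~\ref{stnmedlinks} and Corollary~\ref{stnudenlinks}, by writing $\rho_k(U)$ as a word in the two explicit generators of the $\SU(2)$ quantum representation of the torus and then applying Jeffrey's reciprocity, Theorem~\ref{quadrecisun}. Recall that $\MCG(\Sigma)\cong\SL(2,\bbZ)$ is generated by $S$ and $T=t_\mu^{-1}$; that in the basis $\{\hat{y}_j\}_{j=0}^{k}$ the operator $\rho_k(t_\mu)$ is the diagonal twist matrix of \eqref{tblanchet} (with $N=2$ its $j$-th entry equals $a^{(j+1)^2-1}$); and that $\rho_k(S)$ is the symmetric $S$-matrix, with entries proportional to $\sin(\pi(j+1)(j'+1)/r)$, as computed in Appendix~\ref{normaliseringer}. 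Since $\lvert\Tr(U)\rvert>2$ forces $c\neq0$, the Euclidean (continued-fraction) algorithm applied to $U$ produces a factorisation $U=\pm T^{m_1}ST^{m_2}S\cdots T^{m_\ell}S$ with integers $m_i$ determined by the entries $(a,b,c,d)$, so that $\rho_k(U)$ is -- up to the scalar framing anomaly of Remark~\ref{framings} -- the corresponding product of diagonal twist matrices and $S$-matrices.

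It is cleanest to realise $V_k(\Sigma)$ as the space of odd functions $f:\bbZ/2r\bbZ\to\bbC$ (equivalently, those vanishing at $0$ and $r$), on which $\rho_k(t_\mu)$ becomes multiplication by $e^{\frac{i\pi}{2r}x^2}$ (up to the framing scalar) and $\rho_k(S)$ becomes the normalised finite sine transform -- the restriction to odd functions of the full Fourier kernel $\hat{S}_{xy}=\frac{1}{\sqrt{2r}}e^{\frac{i\pi}{r}xy}$ on $\bbZ/2r\bbZ$. Both $\hat{S}$ and $\hat{T}_{xy}=\delta_{xy}e^{\frac{i\pi}{2r}x^2}$ commute with the reflection $\iota: f(x)\mapsto f(-x)$, whose $(-1)$-eigenspace is exactly $V_k(\Sigma)$, so writing $\hat\rho_k(U)$ for the corresponding word in $\hat{S}$ and the $\hat{T}^{m_i}$ on all of $\bbZ/2r\bbZ$ we get $Z_k(T_U)=\tr\left(\hat\rho_k(U)\circ\tfrac{1}{2}(1-\iota)\right)=\tfrac{1}{2}\tr\hat\rho_k(U)-\tfrac{1}{2}\tr(\hat\rho_k(U)\iota)$. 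Each of the two traces is now a genuine lattice Gauss sum over $(\bbZ/2r\bbZ)^\ell$ -- as a diagonal or anti-diagonal of a product of explicit kernels there is no boundary issue, unlike in the proof of Theorem~\ref{stnmedlinks} -- of the form $\tfrac{1}{2}(2r)^{-\ell/2}\sum_{\vec x\in(\bbZ/2r\bbZ)^\ell}e^{\frac{i\pi}{2r}\langle\vec x,B\vec x\rangle}$, where $B$ is the cyclic tridiagonal integer matrix with diagonal $(m_1,\dots,m_\ell)$ and cyclic off-diagonal entries all $+1$ for the first trace and with one corner sign reversed for the second.

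Applying Theorem~\ref{quadrecisun} to each sum with $V=\bbR^\ell$, $\Lambda=\Lambda^*=\bbZ^\ell$, $\psi=0$, and its parameter ``$r$'' taken to be $2r$ (the integrality hypotheses hold automatically, $B$ being integral) turns it into $\tfrac{1}{2}(\det(B/i))^{-1/2}\sum_{\mu\in\bbZ^\ell/B\bbZ^\ell}e^{-2\pi ir\langle\mu,B^{-1}\mu\rangle}$ after the $(2r)^{\pm\ell/2}$ cancel. Since $B$ is, up to its corner(s), the matrix of a cyclic product of the $2\times2$ transfer matrices $\left(\begin{smallmatrix}m_i&-1\\1&0\end{smallmatrix}\right)$, a continuant identity gives $\lvert\det B\rvert=\lvert\Tr(U)-2\rvert$ for the first matrix and $\lvert\Tr(U)+2\rvert$ for the second -- that is, $\lvert d+a\mp2\rvert$ -- so these two traces produce the two terms of the $\sum_{\pm}$ of the statement, with $(\det(B/i))^{-1/2}$ yielding $\sgn(d+a\mp2)/\sqrt{\lvert d+a\mp2\rvert}$ (any residual root of unity being absorbed into the framing phase). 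Finally, the dual sum $\sum_{\mu\in\bbZ^\ell/B\bbZ^\ell}e^{-2\pi ir\langle\mu,B^{-1}\mu\rangle}$, over a group of order $\lvert d+a\mp2\rvert$, is rewritten -- using the transfer-matrix reduction, in which the lower-left entry of $U$ re-enters as $c$, and introducing a redundant summation variable $\beta$ over $\bbZ/c\bbZ$ together with a compensating factor $\tfrac{1}{\lvert c\rvert}$ -- in the symmetric form $\tfrac{1}{2\lvert c\rvert}\sum_{\beta=0}^{\lvert c\rvert-1}\sum_{\gamma=1}^{\lvert d+a\mp2\rvert}e^{2\pi ir\frac{-c\gamma^2+(a-d)\gamma\beta+b\beta^2}{d+a\mp2}}$ of the statement, now manifestly a function of $U$ alone. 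The remaining ambiguity -- the choice of word and the projectivity of $\rho_k$ -- collects into the phase $e^{2\pi i\psi(U)/(4r)}$; using that the relators of $\SL(2,\bbZ)$ lift under $\rho_k$ to explicit scalars, one checks that $\psi(U)$ is a well-defined integer depending only on $U$, the Rademacher-type cocycle of \cite[(4.4)]{Jef}, and this also fixes the canonical $2$-framing of $T_U$.

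The main obstacle is the combinatorial bookkeeping in the last two steps: carrying the cyclic tridiagonal form $B$ through the two traces with the correct signs, parities and corner conventions; evaluating its continuant determinant precisely as $d+a\mp2$; and -- most delicately -- re-indexing the dual Gauss sum over $\bbZ^\ell/B\bbZ^\ell$ via the transfer matrices so that the quadratic form becomes exactly $\tfrac{-c\gamma^2+(a-d)\gamma\beta+b\beta^2}{d+a\mp2}$ on the box $0\le\beta<\lvert c\rvert$, $1\le\gamma\le\lvert d+a\mp2\rvert$ with the precise prefactor $\tfrac{1}{2\lvert c\rvert\sqrt{\lvert d+a\mp2\rvert}}$, so that a formula which a priori depends on the continued fraction of $U$ is seen to depend only on the entries of $U$. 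Running alongside this is the careful accounting of the framing anomaly needed to isolate $\psi(U)$ as a genuine invariant (equivalently, the verification that the whole expression is invariant under conjugation of $U$ in $\SL(2,\bbZ)$), which is precisely the content of \cite[(4.4)]{Jef}.
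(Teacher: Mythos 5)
First, note that the paper itself does not prove Theorem~\ref{jeffreythm}: it is quoted verbatim from \cite{Jef}, so the only meaningful comparison is with Jeffrey's own derivation. Her route is different from yours. She never decomposes $U$ into a word in $S$ and $T$: instead she uses the transformation law of the (Weyl--antisymmetrized) theta functions under an \emph{arbitrary} $U$ with $c\neq 0$ to write the matrix elements of $\rho_k(U)$ directly as a Gauss sum whose coefficients are the entries $a,b,c,d$, and then applies the reciprocity theorem (Theorem~\ref{quadrecisun}) a single time to the resulting low-rank trace sum. The fixed-point lattice $\bbZ^2/(U\mp 1)\bbZ^2$, the quadratic form $\frac{-c\gamma^2+(a-d)\gamma\beta+b\beta^2}{d+a\mp2}$, and the $1/\lvert c\rvert$ overcounting all come out of that computation with no further reduction needed, and the answer is manifestly a function of $U$ alone from the start. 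Your route --- the $ST$-word, the realization of $V_k$ as the odd part of the Weil representation on $\bbZ/2r\bbZ$, the cyclic tridiagonal Gauss sum in $\ell$ variables, and the continuant reduction --- is a genuinely different strategy (essentially the periodic-continued-fraction approach familiar from Dedekind sums), and its skeleton is sound: the identification of $\rho_k(t_\mu^{-1})$ with $x\mapsto e^{i\pi x^2/(2r)}$ up to the anomaly scalar, the splitting $\tr(\hat\rho_k(U)P)=\tfrac12\tr\hat\rho_k(U)-\tfrac12\tr(\hat\rho_k(U)\iota)$, and the form of the two quadratic forms $B$ all check out (e.g.\ for $\ell=1$ one gets $B=(m\pm2)$ against $\tr(T^mS)=m$).

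The weakness is that the two identities on which your approach stands or falls are asserted rather than proved, and they are where essentially all of the work lies. The continuant identity $\lvert\det B\rvert=\lvert\tr U\mp2\rvert$ is standard and recoverable, but the re-indexing of the dual sum over $\bbZ^\ell/B\bbZ^\ell$ --- showing that the discriminant form $-2r\langle\mu,B^{-1}\mu\rangle$ of the rank-$\ell$ lattice coincides with the form $2r\frac{-c\gamma^2+(a-d)\gamma\beta+b\beta^2}{d+a\mp2}$ attached to the rank-two lattice $\bbZ^2/(U\mp1)\bbZ^2$, independently of the chosen word for $U$ --- is a nontrivial lattice-theoretic statement that you only gesture at. Your description of the $\beta$-sum as ``a redundant summation variable \dots with a compensating factor $\tfrac{1}{\lvert c\rvert}$'' also needs care: the exponent depends genuinely on $\beta$, and the redundancy is only that each of the $\lvert d+a\mp2\rvert$ fixed-point Chern--Simons values is hit exactly $\lvert c\rvert$ times by the box $0\le\beta<\lvert c\rvert$, $1\le\gamma\le\lvert d+a\mp2\rvert$; that multiplicity statement itself requires proof. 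So the proposal is a viable alternative architecture, but as written it defers precisely the steps that Jeffrey's direct theta-function computation renders unnecessary; to make it a proof you would need to supply the transfer-matrix/discriminant-form argument in full, or else simply switch to her closed-form expression for $\rho_k(U)_{nn'}$ and apply reciprocity once.
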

In fact, she gives another formula that can be used to obtain the quantum invariants of finite order elements as well. Up to conjugation, the only finite order elements are the identity and
\begin{align*}
  \inv = \begin{pmatrix} -1 & 0 \\ 0 & -1 \end{pmatrix}, f_3 = \begin{pmatrix} 0 & -1 \\ 1 & -1 \end{pmatrix}, f_4 = \begin{pmatrix} 0 & -1 \\ 1 & 0 \end{pmatrix}, f_6 = \begin{pmatrix} 0 & 1 \\ -1 & 1 \end{pmatrix},
\end{align*}
and their inverses (see \cite[p.~201]{FM}), indexed here by their order in $\SL(2,\bbZ)$. Using \cite[(4.7)]{Jef}, the quantum invariants of the corresponding mapping tori are (up to framing anomalies) given by
\begin{align*}
  Z_k(T_{\inv}) &= Z_k(T_{\id}) = r-1, \\
  Z_k(T_{f_3}) &= \frac{i}{2\sqrt{3}}(2\exp(-2 \pi i r /3) + 1) + \frac{1}{2} \exp(-\pi i/2), \\
  Z_k(T_{f_4}) &= \frac{1}{2}(\exp(\pi i r) + 1), \\
  Z_k(T_{f_6}) &= \frac{i}{2\sqrt{3}}(2\exp(2\pi i r /3) + 1) + \frac{1}{2},
\end{align*}
and similar expressions for the inverses. Thus, the AEC can readily be checked for finite order elements as well. We summarize these results in the following theorem\footnote{Note though that Jeffrey seems to use an orientation convention different than ours. Recall that reversing the orientation of the $3$-manifold conjugates the quantum invariants and reversing the sign of the Chern--Simons values.}.
\begin{thm}
  \label{summarythm}
  The asymptotic expansions of the $\SU(2)$-Witten--Reshetikhin--Turaev invariants of torus bundles $T_U$, $U \in \SL(2,\bbZ)$, are exact and in accordance with Conjecture~\ref{asympconjec}. The phases and growth rates of the invariants are summarized for the conjugacy classes of $\SL(2,\bbZ)$ in Table~\ref{fintabel}.
\end{thm}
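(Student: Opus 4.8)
The plan is to prove Theorem~\ref{summarythm} by a case analysis over the conjugacy classes of $\SL(2,\bbZ)$, exploiting the trichotomy $\abs{\Tr U} < 2$, $\abs{\Tr U} = 2$, $\abs{\Tr U} > 2$ together with the central elements $\pm\id$. Since mapping tori of conjugate monodromies are homeomorphic (change of coordinates), and since orientation reversal merely conjugates $Z_k$ and negates the Chern--Simons values while the $2$-framing contributes only an asymptotically irrelevant root of unity (Remark~\ref{framings}), it suffices to fix one representative per class, exhibit a closed formula for $Z_k(T_U)$, read off the phases $c_j$, growth rates $d_j$ and leading coefficients $b_j$, and check the $c_j$ against the Chern--Simons values on $\calM(T_U)$; assembling these data over the three families produces Table~\ref{fintabel}.

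First I would dispatch the hyperbolic classes $\abs{\Tr U} > 2$ using Jeffrey's Theorem~\ref{jeffreythm}. There the overall constant $\tfrac{1}{2\abs{c}\sqrt{\abs{d+a\mp2}}}$ carries no power of $r$, the prefactor $\exp(2\pi i\psi(U)/(4r))$ has a convergent Taylor expansion in $r^{-1}$, and the finite double sum is a $\bbC$-linear combination of exponentials $\exp(2\pi i r\kappa)$ with $\kappa$ ranging over the finitely many values modulo $1$ of the quadratic form $(-c\gamma^2+(a-d)\gamma\beta+b\beta^2)/(d+a\mp2)$; multiplying these out and collecting equal exponents yields an expression of exactly the shape required by Conjecture~\ref{asympconjec} with $d = 0$, visibly exact. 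That the exponents $\kappa$ are precisely the Chern--Simons values on $\calM(T_U)$ is Jeffrey's computation, which I would cite rather than reprove.

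Next I would handle the reducible classes $\abs{\Tr U} = 2$. Every such mapping torus is homeomorphic to $M^b$ or $\tilde M^b$ for some $b \in \bbZ$; for $b \neq 0$ the exact formula is Corollary~\ref{stnudenlinks}, the AEC with the $c_j$ identified as $\CS(\calM)$ is Corollary~\ref{su2cs} via Proposition~\ref{su2modulirum}, and the growth rates $d_{j/b}' = \tfrac12$, $d_{-b/4}' = 0$ are Theorem~\ref{growthratesu2}, so the table entries are read off directly. The residual cases $M^0 = T_{\id}$ and $\tilde M^0 = T_{\inv}$ are finite order and fall under the last case. For the periodic classes (and $\pm\id$), up to conjugation and inversion $U \in \{\id,\inv,f_3,f_4,f_6\}$, and the explicit finite expressions for $Z_k(T_U)$ recorded above --- obtained from \cite[(4.7)]{Jef} --- are literally their own asymptotic expansions: $Z_k(T_\id) = Z_k(T_\inv) = r-1$ has $c_0 = 0$, $d_0 = 1$; and $Z_k(T_{f_3}), Z_k(T_{f_4}), Z_k(T_{f_6})$ are $\bbC$-combinations of terms $\exp(2\pi i r c_j)$ with $c_j$ among $0, \pm\tfrac13, \tfrac12$ modulo $1$, each with a constant coefficient, so all $d_j = 0$. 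Matching the $c_j$ with the classically known Chern--Simons values on the (finite) flat $\SU(2)$-moduli spaces of these Seifert manifolds completes those rows of Table~\ref{fintabel}.

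The step I expect to require the most care is not any individual computation but the reconciliation of conventions: Jeffrey's orientation and framing normalizations differ from ours (cf.\ the footnote to the theorem), so before her formulas can be transcribed into Table~\ref{fintabel} I would track the effect of orientation reversal (conjugation of $Z_k$, sign change of the $c_j$) and of the framing anomaly (an overall root of unity) to make the phases consistent across all three families, and I would verify that the boundary cases $b = 0$ and $U = \pm\id$ are each accounted for exactly once.
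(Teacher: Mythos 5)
Your proposal is correct and follows essentially the same route as the paper: the trichotomy by trace, with hyperbolic classes dispatched by Jeffrey's Theorem~\ref{jeffreythm}, trace $\pm 2$ classes by Corollary~\ref{stnudenlinks}, Corollary~\ref{su2cs} and Theorem~\ref{growthratesu2}, and the finitely many periodic classes (including $\pm\id$, hence $M^0$ and $\tilde M^0$) by the explicit formulas derived from \cite[(4.7)]{Jef}. Your closing remark on reconciling orientation and framing conventions is exactly the caveat the paper records in the footnote to the theorem.
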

\begin{table}[h]
  \centering
  \begin{tabular}{l|c|c}
  $U \in \SL(2,\bbZ)$ & $\{c_j\}$ &$ \{d_j\}$ \\
  \hline
    $\begin{pmatrix} \pm 1 & 0 \\ 0 & \pm 1 \end{pmatrix}$ & $\{0\}$ & $\{1\}$ \\
    $\begin{pmatrix} \pm 1 & -b \\ 0 & \pm 1 \end{pmatrix}$, $b \not= 0$ even & $\{\tfrac{j^2}{b} \mid j = 0, \dots, \tfrac{\abs{b}}{2} \}$ & $\{\tfrac{1}{2}\}$\\
    $\begin{pmatrix} \pm 1 & -b \\ 0 & \pm 1 \end{pmatrix}$, $b$ odd & $\{\tfrac{j^2}{b} \mid j = 0, \dots, \tfrac{\abs{b}-1}{2} \}\cup\{-\tfrac{b}{4}\}$ & $\{\tfrac{1}{2}\} \cup \{0\}$\\
    $\begin{pmatrix} a & b \\ c & d \end{pmatrix}, \,\, \abs{a+d} \not= 2$ &$\left\{ \frac{-c\gamma^2 + (a-d)\gamma\beta + b\beta^2}{d+a\pm 2} \relmiddle| \begin{aligned} &0 \leq \beta < c, \\ &0< \gamma \leq\abs{a+d\pm 2} \end{aligned}	 \right\}$ & $\{0\}$
  
  \end{tabular}
  \caption{Summary of phases and growth rates of quantum invariants of torus bundles.}
  \label{fintabel}
\end{table}

\section{The case $G = \SU(N)$}
\label{sunafsnit}

\subsection{The quantum invariants}
Throughout the following, we use the notation of Appendix~\ref{normaliseringer}. We will need the observation that
\begin{align*}
P_k + \rho &= \{ \mu = \lambda + \rho \mid \lambda \in P_+, \langle \lambda , \theta \rangle \leq k \} \\
	 &= \{ \lambda \in \mathrm{int}(P_+) \mid \langle \lambda , \theta \rangle < r \}.
\end{align*}
For this reason, let  $\tilde{P}_r = P_k + \rho$.
\begin{thm}
  \label{invariantssun}
  The level $k$ quantum $\SU(3)$-invariants of $M^b$, $b \not= 0$, are given by
  \begin{align*}
		Z_k(M^b) =&\, \exp(2\pi i/r) \Bigg( -\frac{\sqrt{3}i}{18b} r \sum_{n=0}^{3\abs{b}-1}\sum_{m=0}^{3\abs{b}-1} \exp\left(2\pi i r\frac{n^2+m^2-nm}{b}\right)\\
		&\,-\frac{1}{2}\sqrt{\frac{3}{2b}}\exp(-\pi i/4) \sqrt{r} \sum_{n=0}^{2\abs{b}-1}\exp\left(\pi ir \frac{3n^2}{2b}\right) \\
		&\, + \frac{1}{3} + \frac{2}{3} \exp\left( -2\pi i r \frac{b}{3}\right)\Bigg),
  \end{align*}
  where $r = k+3$.
\end{thm}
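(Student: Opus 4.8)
\textbf{Proof proposal for Theorem~\ref{invariantssun}.}
The plan is to mirror the structure of the $\SU(2)$ computation, replacing the one-dimensional lattice sum by the rank-two weight lattice of $\SU(3)$. First I would write down $Z_k(M^b) = \tr\rho_k(t_\mu^b)$ using \eqref{dehntwistinvariant} with $N=3$, so the invariant is $\sum_{\lambda \in \Gamma_{3,k}} a^{b(-\abs{\lambda}^2 + 9\abs{\lambda} + 6\sum_{(i,j)\in\lambda}\mathrm{cn}(i,j))}$. The key preliminary step is to reindex this sum over $\Gamma_{3,k}$ by shifted weights $\mu = \lambda + \rho \in \tilde{P}_r$, rewriting the twist exponent in \eqref{tblanchet} in terms of the Weyl-invariant quadratic form $\langle \mu,\mu\rangle$ on the weight lattice (up to an overall constant in $\mu=\rho$, which produces the $\exp(2\pi i/r)$ prefactor); this is the standard fact that the $T$-matrix acts by $\exp(\pi i(\langle\mu,\mu\rangle - \langle\rho,\rho\rangle)/r)$ up to normalization, and I would use the explicit root data of $\SU(3)$ recalled in Appendix~\ref{normaliseringer} to pin down the constants. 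The effect is to convert $Z_k(M^b)$ into a Gauss-type sum $\sum_{\mu} \exp(\pi i b\langle\mu, B\mu\rangle/r)$ over $\mu$ ranging in a fundamental alcove $\tilde{P}_r$.

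Next I would extend the sum over the alcove $\tilde{P}_r$ to a sum over a full fundamental domain for $P/rP$ (equivalently, a sum over the finite torus of order $r^2$ times the index of $P$ in the root lattice), using the Weyl group action: the summand is anti-invariant or invariant under $W$ in the appropriate way so that summing over the alcove is, up to the order of $W$ and correction terms from the walls, the same as $\frac{1}{\abs{W}}$ times the sum over the torus. For $\SU(3)$, $\abs{W}=6$, which is where the $1/6$ (and hence the $\sqrt{3}i/18$ after combining with $\det(B/i)^{-1/2}$) originates. The boundary contributions — weights lying on the walls of the alcove, i.e.\ the cases where $\lambda^\star$-type degeneracies occur — must be collected separately, and these give the lower-order polynomial-in-$r$ terms $\tfrac13 + \tfrac23\exp(-2\pi i rb/3)$ and the rank-one Gauss sum $\sqrt{r}\sum_n \exp(\pi i r \cdot 3n^2/(2b))$ coming from weights on a single wall (which behaves like an $\SU(2)$ subsystem). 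I would handle the walls by a careful inclusion–exclusion over the faces of the alcove, exactly as the $(-1)^j(j+1)/2$ and $(j+1)/2$ terms were split off in the proof of Theorem~\ref{stnmedlinks}.

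Then I would apply Jeffrey's reciprocity, Theorem~\ref{quadrecisun}, with $V = \frakt^* \cong \bbR^2$, $\Lambda$ the weight lattice $P$ (or root lattice $Q$, whichever makes the integrality hypotheses hold — one must check $\tfrac12 r\langle\lambda,B\lambda\rangle \in \bbZ$ etc., which is exactly the role of the assumption that $N$ and $r$ interact well, and will require $\gcd$ bookkeeping), $B$ the form $b$ times the Cartan matrix (suitably normalized), and $\psi$ the shift by $\rho/$something. This converts the sum over the order-$r^2$ torus into a sum over $\Lambda^*/B\Lambda^*$, i.e.\ over the $3\abs{b} \times 3\abs{b}$ grid appearing in the statement, picking up the factor $(\det(B/i))^{-1/2} r^{l/2} = (\det(B/i))^{-1/2} r$ with $l=2$; computing $\det(B/i)$ for the $\SU(3)$ Cartan form gives the $\sqrt{3}/18 \cdot b^{-1}$ and the phase $i$ (from $(\cdot/i)^{-1/2}$ on a positive-definite rank-two form). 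The quadratic form $n^2 + m^2 - nm$ is precisely $\langle x, x\rangle$ for $x = n\alpha_1^* + m\alpha_2^*$ in the $\SU(3)$ coweight lattice, so this identification is forced. Finally I would reassemble the three pieces — main rank-two Gauss sum, wall rank-one Gauss sum, and point contributions — and match constants.

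The main obstacle I anticipate is the bookkeeping of the \emph{boundary strata of the alcove}: passing from the sum over the Weyl alcove $\tilde{P}_r$ to a sum over the full lattice quotient is not a clean division by $\abs{W}=6$ because weights on the walls are fixed by reflections, and getting the right combination of lower-order terms (the factors $\tfrac13$, $\tfrac23$, and the coefficient $\tfrac12$ on the $\sqrt{r}$-sum, together with the correct phases $\exp(-2\pi i rb/3)$ and $\exp(\pi i r \cdot 3n^2/(2b))$) requires tracking each face and edge of the triangular alcove separately and applying a lower-rank version of the reciprocity law on each wall. This is the $\SU(3)$ analogue of the delicate "$+1$" and "$-1$" corrections in \eqref{gaussreciigen} and the proof of Theorem~\ref{stnmedlinks}, but with a two-dimensional alcove there are more faces to account for, and the central elements of $\SU(3)$ (the factor-$3$ phenomena, reflected in the $3\abs{b}$ summation ranges and the $b/3$ phase) enter through the fundamental-group quotient $P/Q \cong \bbZ/3$. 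A secondary technical point is verifying the integrality hypotheses of Theorem~\ref{quadrecisun} for the chosen lattice and form, which may force a rescaling of $\Lambda$ by $3$ (again the source of the $3\abs{b}$ ranges) and a correspondingly adjusted $\psi$.
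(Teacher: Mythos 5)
Your proposal follows essentially the same route as the paper: shift the sum to $\tilde{P}_r = P_k+\rho$, unfold the alcove sum over a full quotient of the weight lattice using the (affine) Weyl group symmetry of $g(\lambda)=\exp(b\pi i\langle\lambda,\lambda\rangle/r)$, apply Theorem~\ref{quadrecisun} to the resulting rank-two Gauss sum (the paper takes $\Lambda=\Lambda^w$, $B=2bN\,\Id$, period $2Nr$, then uses the extra $3r\Lambda_i$-periodicity to land on $\Lambda^R/3b\Lambda^R$, i.e.\ your $3\abs{b}\times 3\abs{b}$ grid with $\langle\mu,\mu\rangle=2n^2+2m^2-2nm$), and treat the wall and vertex strata separately via the one-dimensional reciprocity of Theorem~\ref{quadreci}. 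The only point where your sketch is imprecise is the unfolding multiplicity — the paper's decomposition is $\sum_{\Lambda^w/3r\Lambda^w}g = 18\sum_{\tilde{P}_r}g + 9\sum_{P^{(1)}_r}g - 2\sum_{P^{(0)}_r}g$, so the $18$ is $\abs{W}\cdot[\Lambda^w:\Lambda^R]$ rather than $\abs{W}$ alone — but you correctly flagged exactly this bookkeeping (and the $P/Q\cong\bbZ/3$ rescaling) as the remaining work.
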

\begin{proof}
  Much of the proof holds for every $N$ and to illustrate how one might proceed in general, we specialize only to $N = 3$ when necessary.

  By \eqref{dehntwistinvariant} and \eqref{dehntwistcft}, it suffices to calculate
  \begin{align*}
		\sum_{\lambda \in P_k} \exp\left( b \frac{\pi i}{r} \langle \lambda + \rho , \lambda + \rho \rangle \right),
  \end{align*}
  and by the remark preceding the statement of the theorem, this sum may also be written as
  \begin{align}
    \label{sumibevis}
		\sum_{\lambda  \in \tilde{P}_r} \exp\left( b \frac{\pi i}{r} \langle \lambda , \lambda \rangle \right).
  \end{align}
  Let $g(\lambda) = \exp\left( b \frac{\pi i}{r} \langle \lambda , \lambda \rangle \right)$ for $\lambda \in \Lambda^w$, in the notation of Section~\ref{normaliseringer}.
  
  The idea of the following follows closely that of the similar theorem in \cite{Jef}. In this, Jeffrey tiles cubes in $\bbR^{N-1}$ by copies of $\tilde{P}_r$ (or more precisely, subsets of the weight space of the form $\{ \sum x_i \Lambda_i \in \bbR^{N-1} \mid x_i > 0, \sum_{i=1}^{N-1} x_i < r \}$), generated by the action of a particular $r$-dependent subgroup of the affine Weyl group, noting that for some large enough number of tiles, one is able to apply Theorem~\ref{quadrecisun} to the weights contained in the resulting cube, and it turns out that when summing $g(\lambda)$ over all weights in the entire cube, all tiles contribute the same; now, the main difference between what we do here and what is considered in \cite{Jef} is that in her proof, the boundaries of the tiles do not contribute to the sum, and so the calculation boils down to a combinatorial count of the tiles used so as to figure out the contribution from a single tile. In our calculation, the boundaries do contribute and so we need to be slightly more careful; the aim is to apply Theorem~\ref{quadrecisun} with $\Lambda = \Lambda^w$, $r = 2rN$, $B = 2bN\Id$, $\psi = 0$, tiling a cube of side lengths $2Nr$ in $\bbR^{N-1}$ by simplices as above. See Figure~\ref{su3tiling} for the case $N = 3$; here, the cube of side lengths $6r$ has been split up into $72$ $2$-dimensional triangles as well as a number of lower dimensional simplices. The point now is that the values of $g(\lambda)$ on $\Lambda^w/2rN\Lambda^w$ are determined by the values on a single tile, i.e. a single $r$-alcove and its boundary. This follows from the fact that $g(\lambda)$ is invariant under the action of the Weyl group and the translations $\lambda \mapsto \lambda + 2h_\alpha$, for any simple root $2h_\alpha$. The group generated by these actions is exactly the group of reflections in faces of the $r$-alcove.
  \begin{figure}
    \centering
    \includegraphics{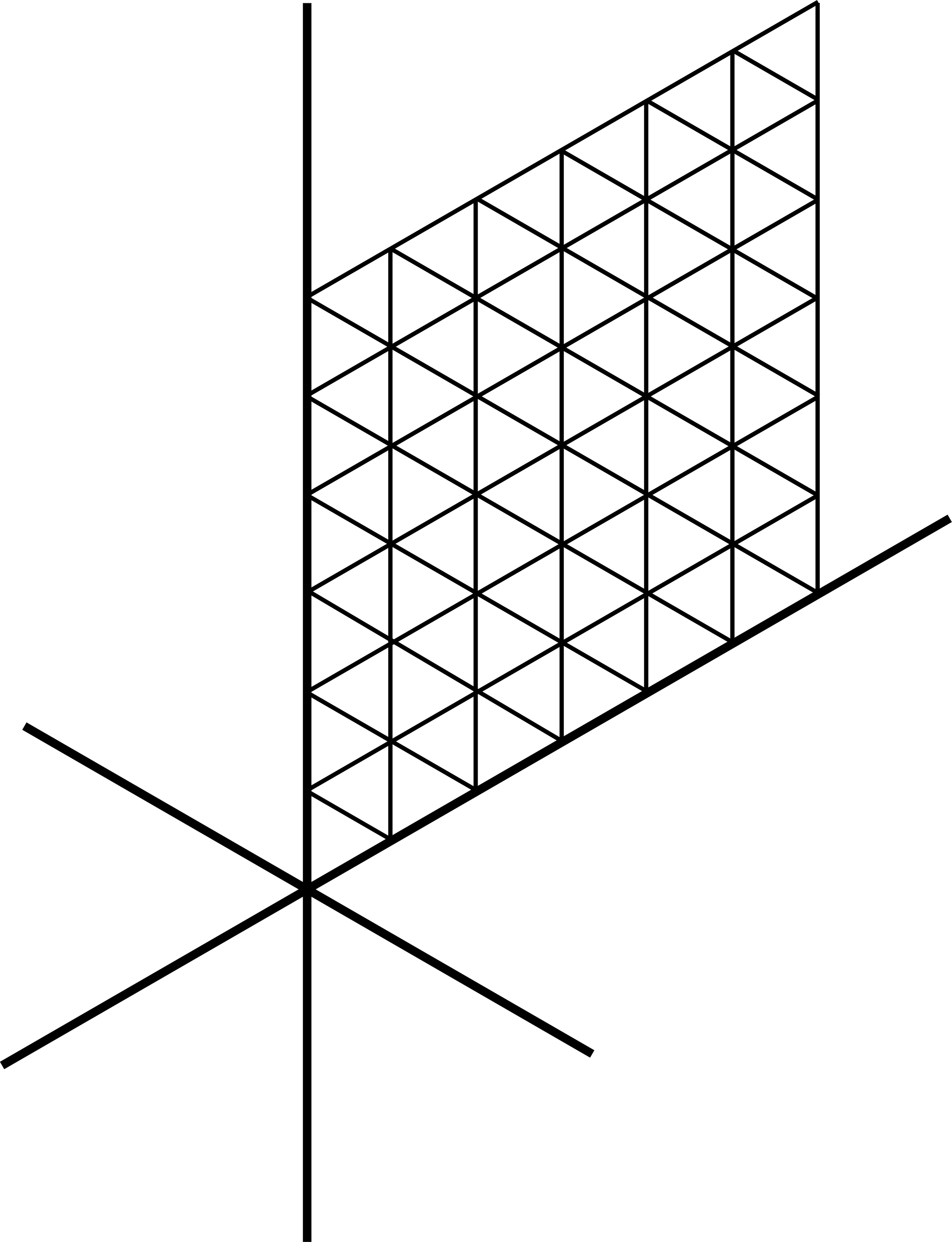}
    \caption{The tiling of $\Lambda^w/2Nr\Lambda^w$ by copies of $\tilde{P}_r$ for $N = 3$.}
    \label{su3tiling}
  \end{figure}
  The root lattice of $\SU(N)$ has volume $\Vol(\Lambda^R) = \sqrt{N}$. This can be seen using a particular identification of the root system with $\bbR^{N-1}$, writing the $i$'th basis vector of the lattice as
  \begin{align*}
		-\sqrt{\frac{j-1}{j}}e_{j-1}+\sqrt{\frac{j+1}{j}}e_{j},
  \end{align*}
  where $e_j$ is the $j$'th standard basis vector of  $\bbR^{N-1}$ (and $e_0 = 0$). Application of Theorem~\ref{quadrecisun} gives\footnote{Note that in \cite{Jef}, the branch cut is along the negative real axis, and that one thus needs to be slightly careful with the signs on the right hand side of Theorem~\ref{quadrecisun} when $N \equiv 3 \,\,\MOD\, 4$.}

  \begin{align*}
		\sum_{\lambda \in \Lambda^w/2rN\Lambda^w} g(\lambda) = \frac{1}{\sqrt{N}} \sqrt{\left(\frac{i}{2bN}\right)^{N-1}} (2rN)^{(N-1)/2} \sum_{\mu \in \Lambda^R/2bN\Lambda^R} \exp\left(-\pi i r \frac{\langle \mu , \mu \rangle}{b}\right).
  \end{align*}
  
  From now on, we specialize to the case $N = 3$ and elaborate on the general case in Remark~\ref{generelsunremark}. In this case $g(\lambda)$ is further invariant under $\lambda \mapsto \lambda + 3r\Lambda_i$, and the above simplifies to
  \begin{align*}
		\sum_{\lambda \in \Lambda^w/3r\Lambda^w} g(\lambda) = \frac{i}{b}\sqrt{3} r \sum_{\mu \in \Lambda^R / 3b\Lambda^R} \exp\left(-\pi i r\frac{\langle \mu,\mu \rangle}{b}\right).
  \end{align*}
  If $\mu = nh_1 + mh_2$, where $h_1,h_2$ are the simple (co)roots spanning $\Lambda^R$, we have
  \begin{align*}
		\langle \mu , \mu \rangle = 2n^2+2m^2-2nm.
  \end{align*}
  The reflection invariance of $g(\lambda)$ implies that we can write $\sum_{\lambda \in \Lambda^w/3r\Lambda^w} g(\lambda)$ in terms of lower dimensional affine subspaces of the weight space. Namely,
  \begin{align*}
		\sum_{\lambda \in \Lambda^w/3r\Lambda^w} g(\lambda) = 18\sum_{\lambda \in \tilde{P}_r} g(\lambda) + 9 \sum_{\lambda \in P^{(1)}_r} g(\lambda) -2\sum_{\lambda \in P^{(0)}_r} g(\lambda),
  \end{align*}
  where
  \begin{align*}
		P^{(1)}_r &= \{ \lambda = a\Lambda_1 + 0\Lambda_2 \mid a = 0, \dots, 3r-1 \}, \\
		P^{(0)}_r &= \{ \lambda = ar\Lambda_1+br\Lambda_2 \mid a, b = 0,1,2\}.
  \end{align*}
  See Figure~\ref{su3invariantfigur}, where the colours indicate the various values of $g(\lambda)$. We find that
  \begin{figure}[t]
    \centering
    \includegraphics[scale=3]{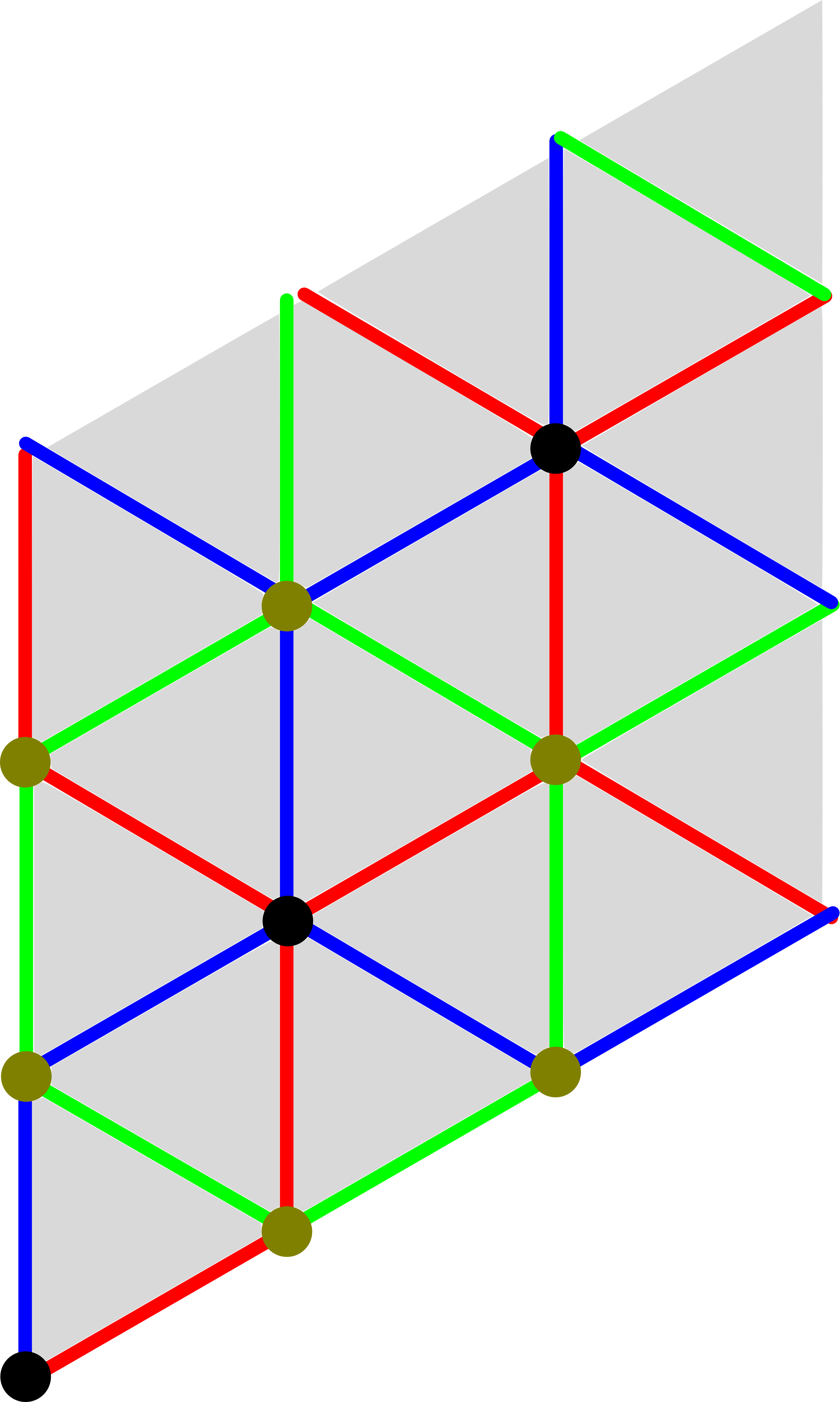}
    \caption{The division of $\Lambda^w/3r\Lambda^w$ into subsets in affine subspaces of lower dimensions.}
    \label{su3invariantfigur}
  \end{figure}
  \begin{align*}
		\sum_{\lambda \in P^{(0)}_r} g(\lambda) = 3 \cdot 1 + 6 \cdot \exp\left(\frac{2\pi i}{3} br\right),
  \end{align*}
  and it follows from Theorem~\ref{quadreci} that
  \begin{align*}
		\sum_{\lambda \in P^{(1)}_r} g(\lambda) = \sum_{n=0}^{3r-1} \exp\left(\frac{2 \pi i}{3r} bn^2\right) = \sqrt{\frac{3r}{2b}} \exp(\pi i/4) \sum_{n=0}^{2\abs{b}-1} \exp\left(-\pi i \frac{3rn^2}{2b}\right).
  \end{align*}
  Putting this together, we obtain the claim of the theorem.
\end{proof}

The case $b = 0$ is handled separately. It could be viewed as a special case of the Verlinde formula for $\SU(N)$ but we include an elementary argument for completeness.
\begin{prop}
  \label{3torusprop}
  The level $k$ quantum $\SU(N)$-invariant of $M^0 = S^1 \times S^1 \times S^1$ is given by
  \begin{align*}
		Z_k^{\SU(N)}(M^0) = \frac{1}{(N-1)!}(r-1)(r-2) \cdots (r-(N-1)).
  \end{align*}
  where as always $r = k + N$.
\end{prop}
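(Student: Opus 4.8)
The plan is to recognize $M^0=S^1\times S^1\times S^1$ as the mapping torus $T_{\id}=\Sigma\times S^1$ of the identity on $\Sigma=S^1\times S^1$ (this is the $b=0$ case of $M^b=T_{t_\mu^b}$). By the discussion in Section~\ref{invmaptori}, for any mapping class $f$ one has $Z_k(T_f)=\tr\rho_k(f)$; applying this to $f=\id$ gives
\begin{align*}
  Z_k^{\SU(N)}(M^0)=\tr\rho_k(\id)=\tr\Id_{V_k^{\SU(N)}(\Sigma)}=\dim_{\bbC}V_k^{\SU(N)}(\Sigma).
\end{align*}
So the statement reduces to computing the dimension of the TQFT vector space attached to the torus, which sidesteps any reciprocity estimates entirely.

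Next I would invoke the explicit basis of $V_k^{\SU(N)}(\Sigma)$ recalled in Section~\ref{invmaptori}: it consists of the elements $(D^2\times S^1,\hat y_\lambda)$ as $\lambda$ ranges over the labelling set $\Gamma_{N,k}$, so $\dim_{\bbC}V_k^{\SU(N)}(\Sigma)=\#\Gamma_{N,k}$. It then remains to count the Young diagrams $\lambda=(\lambda_1\geq\cdots\geq\lambda_p\geq 1)$ with $\lambda_i\leq k$ and $p<N$, together with the empty diagram. Padding with zeros identifies such a $\lambda$ with a weakly decreasing sequence $k\geq\lambda_1\geq\cdots\geq\lambda_{N-1}\geq 0$, equivalently with a multiset of size $N-1$ drawn from the $k+1$ values $\{0,1,\dots,k\}$; by the standard stars-and-bars count there are exactly $\binom{k+N-1}{N-1}$ of these. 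Writing $r=k+N$, so that $\{k+1,k+2,\dots,k+N-1\}=\{r-(N-1),\dots,r-2,r-1\}$, we get
\begin{align*}
  \#\Gamma_{N,k}=\binom{k+N-1}{N-1}=\frac{(k+N-1)(k+N-2)\cdots(k+1)}{(N-1)!}=\frac{(r-1)(r-2)\cdots(r-(N-1))}{(N-1)!},
\end{align*}
which is the asserted formula.

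I do not expect a genuine obstacle here; the content is essentially a consistency check on conventions. The only points requiring mild care are: (i) that the $\hat y_\lambda$ form a genuine \emph{basis} of $V_k^{\SU(N)}(\Sigma)$, not merely a spanning set — this is part of Blanchet's construction \cite{Bla} and is already used throughout Section~\ref{invmaptori}; (ii) that the empty Young diagram (the trivial colouring) is included in $\Gamma_{N,k}$, which is what makes the formula specialize correctly, e.g. to $Z_k^{\SU(2)}(M^0)=r-1$ in agreement with Theorem~\ref{summarythm}; and (iii) that the identity mapping torus carries no framing anomaly in the chosen normalization, so that $\tr\rho_k(\id)$ is literally $\dim V_k^{\SU(N)}(\Sigma)$ (cf. Remark~\ref{framings}).
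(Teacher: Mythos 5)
Your proposal is correct and follows essentially the same route as the paper: both reduce $Z_k^{\SU(N)}(M^0)=Z_k(\Sigma\times S^1)$ to $\dim V_k(\Sigma)$, i.e.\ to counting the labelling set, the only difference being that the paper counts the shifted weights $\tilde P_r$ (tuples of positive integers with sum less than $r$) by a recursion with induction on $N$ and $r$, whereas you count $\Gamma_{N,k}$ directly by stars-and-bars to obtain the closed form $\binom{k+N-1}{N-1}$ at once. The two counts agree under the bijection $\lambda\mapsto\lambda+\rho$ between $\Gamma_{N,k}$ and $\tilde P_r$, so your more direct count is a harmless (arguably cleaner) variant.
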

\begin{proof}
  In general, $Z_k(\Sigma \times S^1) = \dim V_k(\Sigma \times S^1)$, and it follows from the first part of the proof of Theorem~\ref{invariantssun} that $V_k(S^1 \times S^1)$ has basis given by elements of $\tilde{P}_r$. Elements of $\tilde{P}_r$ correspond to tuples $(a_1,\dots,a_{N-1})$, where $a_i \in \bbZ_{> 0}$ and $\sum_i a_i < r$. Let $f_N(r) = \# \tilde{P}_r$. It follows that
  \begin{align*}
		f_2(r) =  r-1, \quad f_{N+1}(r) = \sum_{l=1}^{r-1} f_N(l).
  \end{align*}
  The claim then follows from induction on both $N$ and $r$.
\end{proof}

For the trace $-2$ homeomorphisms, i.e. the manifolds $\tilde{M}^b$, matters become slightly more involved, as the trace sum is now to be performed only over the Young diagrams $\lambda$ invariant under the involution $\lambda \mapsto \lambda^\star$. This boils down to summing over those $\lambda$ invariant under $\lambda_i \to \lambda_1 - \lambda_{N+1-i}$, $i = 1, \dots, N-1$. Viewing $\lambda = \sum_i \eps_i \Lambda_i$ as a weight, this corresponds to considering those $\lambda$ with $\eps_i = \eps_{N-i}$, $i =  1, \dots, N-1$. As an example of how this complicates the combinatorics, we note the following special case.
\begin{prop}
  \label{skodprop}
  We have
  \begin{align*}
		Z^{\SU(3)}_k(\tilde{M}^1) &=\, \exp(2\pi i/r) \left( \left[\frac{1}{2}\left(\sqrt{r/2}\exp\left(-\frac{\pi i}{4}\right)-1\right)\right] \right. \\
		 &\, \left.+ \left[ \frac{1}{2}\left(\sqrt{r/2}\exp\left(-\frac{\pi i}{4}\right)-\frac{1}{2}\right)\right]\exp(2\pi ir/4)-\frac{1}{4}\exp(-2\pi i r/2) \right).
  \end{align*}
\end{prop}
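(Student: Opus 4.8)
The plan is to reduce the trace over $\star$-fixed Young diagrams to a one-dimensional quadratic Gauss sum, evaluate that sum by the reciprocity law of Theorem~\ref{quadreci}, and repackage the parity-of-$r$ dependence as phase factors so as to land in the shape of Conjecture~\ref{asympconjec}.

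First I would use the computation in Section~\ref{invmaptori}: $Z_k^{\SU(3)}(\tilde{M}^1) = \tr(\rho_k(\inv)\rho_k(t_\mu)) = \sum_{\lambda \in \Gamma_{3,k},\ \lambda = \lambda^\star}(\rho_k(t_\mu))_{\lambda,\lambda}$. For $N = 3$ the $\star$-fixed elements of $\Gamma_{3,k}$ are exactly those with $\eps_1 = \eps_2$, i.e.\ the weights $\lambda = \eps\rho$, which as Young diagrams are the partitions $(2\eps,\eps)$ with $0 \le \eps \le \lfloor k/2\rfloor$. A short count gives $\lvert(2\eps,\eps)\rvert = 3\eps$ and $\sum_{(i,j)\in(2\eps,\eps)}\mathrm{cn}(i,j) = \tfrac{5}{2}\eps(\eps-1)$, so by \eqref{tblanchet} (equivalently by \eqref{dehntwistcft} together with $\langle\rho,\rho\rangle = 2$, exactly as in the proof of Theorem~\ref{invariantssun}),
\begin{align*}
  (\rho_k(t_\mu))_{\eps\rho,\eps\rho} = a^{6\eps(\eps+2)} = \exp\!\Big(\tfrac{2\pi i}{r}\Big)\exp\!\Big(-\tfrac{2\pi i(\eps+1)^2}{r}\Big),
\end{align*}
and hence, with $n = \eps + 1$ and $\lceil r/2\rceil - 1 = \lfloor k/2\rfloor + 1$,
\begin{align*}
  Z_k^{\SU(3)}(\tilde{M}^1) = \exp\!\Big(\tfrac{2\pi i}{r}\Big)\sum_{n=1}^{\lceil r/2\rceil - 1}\exp\!\Big(-\tfrac{2\pi i n^2}{r}\Big).
\end{align*}

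Next I would evaluate this partial Gauss sum. The range $1 \le n \le \lceil r/2\rceil - 1$ is a fundamental domain for the reflection $n \mapsto r - n$ on $\bbZ/r\bbZ$, whose fixed points are $n = 0$ always and $n = r/2$ exactly when $r$ is even; since $n^2 \equiv (r-n)^2 \pmod r$, folding the full Gauss sum $G := \sum_{n=0}^{r-1}\exp(-2\pi i n^2/r)$ onto this domain gives $\sum_{n=1}^{\lceil r/2\rceil - 1}\exp(-2\pi i n^2/r) = \tfrac12\bigl(G - 1 - \chi(r)\exp(-\pi i r/2)\bigr)$, where $\chi(r) = 1$ for $r$ even and $0$ for $r$ odd. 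Applying Theorem~\ref{quadreci} with $a = -2$, $b = 0$, $c = r$ (so $ac + b = -2r$ is even) yields $G = \sqrt{r/2}\,\exp(-\pi i/4)\bigl(1 + \exp(2\pi i r/4)\bigr)$. Substituting, pulling out $\exp(2\pi i/r)$, and rewriting the parity factor $\chi(r)\exp(-\pi i r/2)$ in terms of the phases $\exp(\pm 2\pi i r/4)$ and $\exp(-2\pi i r/2)$ — using $\exp(\pi i r) = \exp(-2\pi i r/2)$ to encode $r \bmod 2$ — one then collects the coefficients of $1$, of $\exp(2\pi i r/4)$, and of the remaining phase to obtain the formula of the statement, already in the form of Conjecture~\ref{asympconjec}.

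The hard part is precisely this folding step: because one sums over half a period rather than over a full residue system, the reduction to $G$ introduces the reflection-fixed-point contributions, whose size depends on $r \bmod 2$ (and, once $G$ is inserted, on $r \bmod 4$), and the real work is to launder this parity dependence into genuine phase factors $\exp(2\pi i r c_j)$ so that the final expression is uniform in $k$, as the asymptotic expansion conjecture demands. This is the combinatorial wrinkle flagged just before the statement; it is invisible in the $\SU(2)$ case (where every diagram is $\star$-fixed) and in Jeffrey's hyperbolic computation (where tile boundaries never contribute). I would also note that Jeffrey's reciprocity Theorem~\ref{quadrecisun} cannot be applied directly on the rank-one lattice $\bbZ\rho$ of $\star$-fixed weights for general $r$, since its integrality hypotheses hold only when $4 \mid r$; this is why the elementary reciprocity of Theorem~\ref{quadreci} is the right instrument here — equivalently one could tile a long interval along the diagonal $\bbR\rho$ as in the proof of Theorem~\ref{invariantssun} and run an inclusion–exclusion over its two endpoints.
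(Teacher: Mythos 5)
Your argument is correct and its skeleton coincides with the paper's: the $\star$-fixed diagrams are exactly the $(2\eps,\eps)$, i.e.\ the weights $n\rho$, the invariant reduces to $\exp(2\pi i/r)\sum_{n=1}^{\lfloor(r-1)/2\rfloor}\exp(-2\pi in^2/r)$, and the half-period Gauss sum is evaluated via Theorem~\ref{quadreci}. The difference lies in how that last step is organized. The paper splits into three cases according to $r\bmod 4$: for $r\equiv0$ it applies reciprocity to the half sum directly, for $r\equiv2$ it shows the sum vanishes by an antisymmetry pairing, and for $r$ odd it doubles the range before applying reciprocity, after which the three answers must be recombined into one closed expression. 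You instead fold the half-range sum onto the full Gauss sum $G$ once and for all, keeping track of the reflection fixed points $n=0$ and (for even $r$) $n=r/2$, and apply reciprocity a single time to $G$; this avoids the case analysis and produces the uniform formula in one step. One substantive point: carrying out your final collection gives $-\tfrac14\exp(-2\pi ir/4)$ as the last summand, not $-\tfrac14\exp(-2\pi ir/2)$ as printed. Yours is the correct version: for $r\equiv2\pmod 4$ the sum is $0$ (as the paper's own Case~2 shows), which your formula reproduces while the printed one gives $-\tfrac12\exp(2\pi i/r)$, and a direct check at $r=5$ confirms the same discrepancy. So you should not expect to massage your parity correction into a multiple of $\exp(-\pi i r)$; the statement as printed contains a typo in that final exponent, and your derivation is the one consistent with the paper's own case-by-case computations.
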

\begin{proof}
  We proceed as in the proof of Theorem~\ref{invariantssun} and consider
  \begin{align*}
		\sum_{\lambda \in \tilde{P}_r} \exp\left(\frac{\pi i}{r} \langle \lambda , \lambda \rangle\right) \delta_{\lambda , \lambda^\star}.
  \end{align*}
  By the comments before the statement of the proposition, in the case of $\SU(3)$, we find that $\delta_{\lambda , \lambda^\star} = 1$ if and only if $\lambda = n(\Lambda_1 + \Lambda_2) = n \rho$, and $n \rho \in \tilde{P}_r$ if and only if $0 < n \leq \lfloor\tfrac{r-1}{2}\rfloor$. Now $\langle n \rho , n \rho \rangle = 2n^2$ and so
  \begin{align*}
		\sum_{\lambda \in \tilde{P}_r} \exp\left(\frac{\pi i}{r} \langle \lambda , \lambda \rangle\right) \delta_{\lambda , \lambda^\star} = \sum_{n=1}^{\lfloor \frac{r-1}{2} \rfloor} \exp\left(\frac{\pi i}{r} 2n^2\right).
  \end{align*}
  
  \emph{Case 1}. Assume that $r \equiv 0 \,\, \MOD \, 4$ and let $r = 4s$. Then Theorem~\ref{quadreci} immediately implies that
  \begin{align*}
		\sum_{n=1}^{\lfloor \frac{r-1}{2} \rfloor} \exp\left(\frac{\pi i}{r} 2n^2\right) = \sqrt{\frac{r}{2}}\exp\left(\frac{\pi i}{4}\right) - 1.
  \end{align*}
  
  \emph{Case 2}. Assume that $r \equiv 2 \,\, \MOD\,4$ and let $r = 4s+2$. We note that mod $4s+2$, we have $n^2 \equiv (2s+1-n)^2 + (2s+1)$ and thus
  \begin{align*}
		\sum_{n=1}^{\lfloor \frac{r-1}{2} \rfloor} \exp\left(\frac{\pi i}{r} 2n^2\right) = \sum_{n=1}^{2s}\exp\left(\frac{\pi i}{2s+1} n^2\right) = 0,
  \end{align*}
  since the $n$'th and $(2s+1-n)$'th summands cancel.
  
  \emph{Case 3}. Assume that $r \equiv 1 \,\, \MOD\,2$ and let $r = 2s+1$. As above, mod $4s+2$, we have $2n^2 \equiv 2(2s+1-n)^2$, and it follows that
  \begin{align*}
		\sum_{n=1}^{\lfloor \frac{r-1}{2} \rfloor} \exp\left(\frac{\pi i}{r} 2n^2\right) &= \sum_{n=1}^s \exp\left(\frac{\pi i}{2s+1} 2n^2\right) = \frac{1}{2}\sum_{n=1}^{2s} \exp\left(\frac{\pi i}{2s+1}2n^2\right) \\
		&= \frac{1}{2}\left( \sqrt{\frac{2s+1}{2}} \exp(\pi i/4) (1 + \exp(-\pi i(2s+1)/2)) - 1\right).
  \end{align*}
  The result follows.
\end{proof}
For $b = 0$, the situation is easier to handle.
\begin{prop}
  For $N = 2n$ even, we have
  \begin{align*}
		Z_k^{\SU(N)}(\tilde{M}^0) =&\, \frac{1}{n!}\left(\left(\frac{r}{2}-\frac{n}{2}\right)\prod_{l=1}^{n-1}\left(\frac{r}{2}-l\right) + \prod_{l=1}^n\left( \frac{r+1}{2}-l\right)\right) \\
		  &\,+\frac{1}{n!}\left(\left(\frac{r}{2}-\frac{n}{2}\right)\prod_{l=1}^{n-1}\left(\frac{r}{2}-l\right) - \prod_{l=1}^n\left( \frac{r+1}{2}-l\right)\right) \exp(\pi i r),
  \end{align*}
  and for $N = 2n-1$ odd, we have
  \begin{align*}
		Z_k^{\SU(N)}(\tilde{M}^0) =&\, \frac{1}{2(n-1)!}\left(\prod_{l=1}^{n-1}\left(\frac{r}{2}-l\right) + \prod_{l=1}^{n-1}\left(\frac{r+1}{2}-l\right)\right) \\
		 &\,+ \frac{1}{2(n-1)!}\left(\prod_{l=1}^{n-1}\left(\frac{r}{2}-l\right) - \prod_{l=1}^{n-1}\left(\frac{r+1}{2}-l\right)\right)\exp(\pi i r).
  \end{align*}
\end{prop}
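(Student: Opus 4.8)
The statement is purely combinatorial. Setting $b=0$ in the formula $Z_k(T_{\inv t_\mu^b})=\sum_{\lambda\in\Gamma_{N,k}}\rho_k(t_\mu)_{\lambda,\lambda}^b\,\delta_{\lambda,\lambda^\star}$ from Section~\ref{invmaptori} and noting that $\rho_k(t_\mu)_{\lambda,\lambda}^0=1$, we get $Z_k^{\SU(N)}(\tilde{M}^0)=\#\{\lambda\in\Gamma_{N,k}\mid\lambda^\star=\lambda\}$, the number of self-conjugate labels; so the whole proposition amounts to counting these. The $\exp(\pi i r)=(-1)^r$ in the answer is the tell-tale sign that this count is an Ehrhart-type quasi-polynomial of period two. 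As a sanity check, for $N=2$ the involution $^\star$ is trivial and the count is $\abs{\Gamma_{2,k}}=r-1=Z_k(T_\inv)$, in accordance with Theorem~\ref{summarythm}.

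First I would make self-conjugacy explicit. Writing $\lambda=\sum_{i=1}^{N-1}\eps_i\Lambda_i$ and $a_i=\eps_i+1\in\bbZ_{\ge1}$, membership $\lambda\in\Gamma_{N,k}$ (equivalently $\lambda+\rho\in\tilde{P}_r$) reads $a_1+\cdots+a_{N-1}\le r-1$, while $\lambda^\star=\lambda$ reads $a_i=a_{N-i}$ for all $i$, by the description of $^\star$ recalled just before Proposition~\ref{skodprop}. Hence $Z_k^{\SU(N)}(\tilde{M}^0)$ counts the lattice points of a simplex: for $N=2n-1$ the tuples $(a_1,\dots,a_{n-1})\in\bbZ_{\ge1}^{n-1}$ with $2(a_1+\cdots+a_{n-1})\le r-1$, and for $N=2n$ the tuples $(a_1,\dots,a_n)\in\bbZ_{\ge1}^n$ with $2(a_1+\cdots+a_{n-1})+a_n\le r-1$. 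The extra, unit-weighted coordinate $a_n$ in the even case is precisely what makes the two cases look different.

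The odd case is then immediate: the constraint is $a_1+\cdots+a_{n-1}\le\lfloor\tfrac{r-1}{2}\rfloor$, so stars-and-bars gives the count $\binom{\lfloor(r-1)/2\rfloor}{n-1}$. Writing $\lfloor\tfrac{r-1}{2}\rfloor$ as $\tfrac{r}{2}-1$ when $r$ is even and as $\tfrac{r-1}{2}$ when $r$ is odd, and expanding the binomial coefficient as a product, this becomes $\tfrac{1}{(n-1)!}\prod_{l=1}^{n-1}(\tfrac{r}{2}-l)$ and $\tfrac{1}{(n-1)!}\prod_{l=1}^{n-1}(\tfrac{r+1}{2}-l)$ respectively; recombining the two parity branches via the coefficients $\tfrac{1\pm(-1)^r}{2}$ and using $\exp(\pi i r)=(-1)^r$ reproduces the stated formula for odd $N$.

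For $N=2n$ the same strategy works, but the weight-$2$ coordinates force the parity bookkeeping that will be the one genuinely delicate point. I would sum over the admissible values of $a_n$: for a fixed value $m$ one needs $a_1+\cdots+a_{n-1}\le\lfloor\tfrac{r-1-m}{2}\rfloor$, giving $\binom{\lfloor(r-1-m)/2\rfloor}{n-1}$ tuples, so after re-indexing the total count collapses to $\sum_{j=0}^{r-2}\binom{\lfloor j/2\rfloor}{n-1}$. Grouping the terms $j=2v$ and $j=2v+1$, which contribute the same binomial coefficient, and applying the hockey-stick identity evaluates this to $2\binom{r/2-1}{n}+\binom{r/2-1}{n-1}$ for $r$ even and $2\binom{(r-1)/2}{n}$ for $r$ odd. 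A short computation with Pascal's rule rewrites the former as $\tfrac{2}{n!}(\tfrac{r}{2}-\tfrac{n}{2})\prod_{l=1}^{n-1}(\tfrac{r}{2}-l)$ and the latter as $\tfrac{2}{n!}\prod_{l=1}^{n}(\tfrac{r+1}{2}-l)$, and recombining the parity branches as above gives the claim. The hard part is thus not conceptual: it is carefully handling the floors created by the weight-$2$ coordinates and checking that the resulting period-two quasi-polynomial is exactly the expression written in the statement.
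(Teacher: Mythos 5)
Your argument is correct and follows essentially the same route as the paper: both reduce $Z_k^{\SU(N)}(\tilde{M}^0)$ to counting the self-conjugate elements of $\tilde{P}_r$, i.e.\ the lattice points of the simplex $2\sum_{i=1}^{n-1}a_i + a_n \le r-1$ (with the $a_n$-coordinate absent for odd $N$), split according to the parity of $r$. The only difference is cosmetic: for even $N$ the paper evaluates this count by a recursion in $n$ solved by induction, whereas you sum directly over the middle coordinate $a_n$ and apply the hockey-stick identity; both yield the same period-two quasi-polynomial.
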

\begin{proof}
  As in Proposition~\ref{skodprop}, we simply need to compute the number of elements of $\tilde{P}_r$ invariant under the involution. That is, tuples $(a_1,\dots,a_{N-1})$ with $0 < a_i$, $\sum_i a_i < r$ and $a_i = a_{N-i}$.
  
  \emph{Case 1}. If $N = 2n-1$ is odd, and $r = 2s$ is even, this boils down to counting $b_i > 0$, $i < n$ with $\sum_i 2b_i < r = 2s$. This we already did in Proposition~\ref{3torusprop}, and the result is
  \begin{align*}
		Z_k(\tilde{M}^0) = \frac{1}{(n-1)!} \prod_{l=1}^{n-1} \left(\frac{r}{2} - l\right).
  \end{align*}
  
  \emph{Case 2}. If $N = 2n-1$ is odd, and $r = 2s+1$ is odd, we can proceed as above, noting that now the requirement becomes $\sum_i 2b_i < 2s+1$, or $\sum_i b_i < s+1$, and the result in this case is
  \begin{align*}
		Z_k(\tilde{M}^0) = \frac{1}{(n-1)!} \prod_{l=1}^{n-1} \left(\frac{r+1}{2} - l\right).
  \end{align*}
  This proves the claim for odd $N$.
  
  \emph{Case 3}. Assume now that $N = 2n$ is even. Let $f_n(r) = Z_k^{\SU(N)}(\tilde{M}^0)$ be the number of tuples $(b_1,\dots,b_n)$, $b_i > 0$ with $2\sum_{i=1}^{n-1} b_i + b_n < r$. Then obviously
  \begin{align*}
		f_1(r) = r-1,
  \end{align*}
  and $f_n$ satisfies the recursive relation
  \begin{align}
    \label{rekf}
		f_{n+1}(r) = \begin{cases} \sum_{l=1}^{r/2 - 1} f_n(2l), & \text{$r$ even}, \\ \sum_{l=1}^{(r-1)/2}f_n(2l-1),& \text{$r$ odd.}\end{cases}
  \end{align}
  Introduce $g_n(s) = f_n(2s)$ and $h_n(s) = f_n(2s+1)$. Then \eqref{rekf} implies via induction on $n$ and $s$ that
  \begin{align*}
		g_n(s) = \frac{2}{n!}\left(s-\frac{n}{2}\right) \prod_{l=1}^{n-1}(s-l).
  \end{align*}
  For the $h_n$, one can use a similar argument or simply refer to the proof of Proposition~\ref{3torusprop} and find that
  \begin{align*}
		h_n(s) = \frac{2}{n!}\prod_{l=1}^{n}(s-l).
  \end{align*}
  Now the result follows from
  \begin{align*}
		f_n(r) = \frac{1}{2}((-1)^r+1)g_n\left(\frac{r}{2}\right)+\frac{1}{2}((-1)^{r+1}+1)h_n\left(\frac{r+1}{2}\right).
  \end{align*}
\end{proof}

\subsection{The Chern--Simons values}
We turn now to the question of determining the possible $\SU(N)$-Chern--Simons values for the manifolds under consideration. Whereas it would be nice to have an analogue of Proposition~\ref{su2modulirum} in the general case, we make do with a restriction on possible flat connections, following the proof of Proposition~\ref{su2modulirum}, rather than describing the moduli space explicitly.

\begin{prop}
  \label{suncs}
  Let $\gamma$ be the isotopy class of an essential closed curve in $S^1 \times S^1$, and let $b \in \bbZ$, $b \not= 0$. Let $\calM$ be the moduli space of flat $\SU(N)$-connections on $M^b$. The Chern--Simons action takes the following (not necessarily distinct) values:
  
  Let $a_1, \dots, a_N \in \bbQ$ with $\sum_{l=1}^N a_l \in \bbZ$ and $ba_l \in \bbZ$ for every $l = 1, \dots, N$. Then
  \begin{align*}
		\frac{1}{2}b \left( \sum_{l=1}^N a_l^2 + \left( \sum_{l=1}^N a_l \right)^2 - 2a_N \sum_{l=1}^N a_l\right) \in \CS(\calM),
  \end{align*}
  and is the Chern--Simons value of a completely reducible flat connection; i.e. one whose holonomy is contained in a maximal torus.
  
  The partially reducible and irreducible case: Let $1 \leq i_1 \leq \cdots \leq i_r$, $1 \leq r < N$, be integers with $\sum_l i_l = N$, and let $a_1, \dots, a_r \in \bbQ$ satisfy that $\sum_{l=1}^r i_l a_l \in \bbZ$ and that $bi_l a_l \in \bbZ$ for every $l$. Then
  \begin{align*}
    -\frac{1}{2}b \sum_{l=1}^r i_l a_l(i_r a_r - a_l) + \frac{1}{4}\left( (-1)^{b(1-a_ri_r) \sum i_l a_l} - 1\right) \in \CS(\calM),
  \end{align*}
  and is the Chern--Simons value of a flat connection whose invariant subspaces have dimensions given by the $i_l$. In either case, $\exp(2 \pi i a_l)$ are the eigenvalues of the holonomy about $\gamma$, viewed -- say -- in $S^1 \times S^1 \times \{\tfrac{1}{2}\}$.
  
  Conversely, every value of the Chern--Simons action is of one of the above two forms.
\end{prop}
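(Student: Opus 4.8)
The plan is to mimic the proof of Proposition~\ref{su2modulirum} for general $N$ in order to describe the flat connections explicitly enough, and then to compute their Chern--Simons invariants by the method of Jeffrey \cite{Jef}, \cite{Jefphd}. Writing $m=-b$, identify $\calM$ with conjugacy classes of representations $\rho$ of
\[
  \pi_1(M^b)=\langle\alpha,\beta,\delta\mid\alpha\beta=\beta\alpha,\ \delta\alpha\delta^{-1}=\alpha,\ \delta\beta\delta^{-1}=\alpha^m\beta\rangle
\]
into $\SU(N)$, and set $A=\rho(\alpha)$, $B=\rho(\beta)$, $C=\rho(\delta)$. Since $A$ and $B$ commute and are unitary, after conjugation both are diagonal; let $e^{2\pi i a_1},\dots,e^{2\pi i a_r}$ be the distinct eigenvalues of $A$ with multiplicities $i_1,\dots,i_r$, so $\sum_l i_la_l\in\bbZ$. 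The relation $CAC^{-1}=A$ forces $C$ into $S(\U(i_1)\times\dots\times\U(i_r))$, and $CBC^{-1}=A^mB$ decouples over the blocks as $C_lB_lC_l^{-1}=\zeta_lB_l$ with $\zeta_l:=e^{2\pi i m a_l}$.

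I would then classify the solutions one block at a time. On the $l$-th block, $B_l$ and $\zeta_lB_l$ must be conjugate, so the spectrum of $B_l$ is invariant under multiplication by $\zeta_l$, its eigenspaces being cyclically permuted by $C_l$ along $\zeta_l$-strings; conjugating inside $\U(i_l)$ brings $(A_l,B_l,C_l)$ into a direct sum of ``clock and shift'' pieces of size the order $d_l$ of $\zeta_l$, together with a residual maximal-torus part when $\zeta_l=1$. Closing up a clock-and-shift block forces $d_l\mid i_l$, equivalently $b\,i_la_l\in\bbZ$. The two cases of the statement then arise: all $\zeta_l=1$, equivalently $ba_l\in\bbZ$ for every $l$, the holonomy lying in a maximal torus, which is the completely reducible family parametrised by the $\gamma$-holonomy eigenvalues $e^{2\pi i a_l}$, $l=1,\dots,N$; and the general case, in which the underlying $\bbC^N$ splits $\pi_1$-invariantly into pieces of the prescribed dimensions $i_l$. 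Since $\CS$ is locally constant on $\calM$, the remaining continuous parameters coming from $B$ and $C$ (the analogues of $s,t$ in Proposition~\ref{su2modulirum}) do not affect the Chern--Simons value.

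To evaluate $\CS$ I would, as in \cite{Jef}, write down an explicit flat connection on $\Sigma\times[0,1]$ realising each normal form and descending to $M^b=T_{t_\gamma^b}$, namely a constant $\mathfrak{t}$-valued connection on the fibre in the completely reducible case and its clock-and-shift twist in general, and compute $\CS\equiv-\tfrac{1}{24\pi^2}\int_{M^b}\tr(A\wedge A\wedge A)\bmod\bbZ$. For the torus fibre this reduces to an elementary quadratic integral in the $a_l$; the extra terms $(\sum_l a_l)^2-2a_N\sum_l a_l$ in the first formula, and their analogues in the second, record the $\mathfrak{u}(i_l)$ versus $\mathfrak{su}(N)$ bookkeeping in splitting $\bbC^N$ into blocks, while the correction $\tfrac14((-1)^{b(1-a_ri_r)\sum_l i_la_l}-1)$ records the half-period shift carried by a genuinely twisted block, the same phenomenon responsible for the value $1-\tfrac b4$ (rather than $-\tfrac b4$) in the $\SU(2)$ irreducible component. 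Conversely, since every $\rho$ is conjugate into one of these normal forms, every value of $\CS$ is of one of the two stated shapes.

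The main obstacle is twofold. First, making the block classification exhaustive: one must show that every solution of $C_lB_lC_l^{-1}=\zeta_lB_l$ in $\U(i_l)$ is, up to conjugation, a sum of clock-and-shift pieces and a torus part, via a careful eigenspace-permutation argument, and check that distinct normal forms do not produce spurious Chern--Simons values. Second, and more delicate, pinning down the parity correction term with its exact dependence on $b$, $\sum_l i_la_l$ and $a_ri_r$ requires tracking orientation and framing conventions closely, in effect resolving a $\bbZ/2$ ambiguity in the lift of the monodromy to the moduli space of the torus; this is precisely the kind of sign bookkeeping flagged in the acknowledgements. Everything else is a routine extension of the $\SU(2)$ analysis and of Jeffrey's Chern--Simons computation.
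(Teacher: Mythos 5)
Your first half (the classification of flat connections) is essentially the paper's argument: both reduce to triples $(A,B,C)$ with $AB=BA$, $CAC^{-1}=A$, $CBC^{-1}=A^{m}B$ for $m=-b$, diagonalize $A$ and $B$, and extract the integrality conditions $\sum_l i_la_l\in\bbZ$ and $bi_la_l\in\bbZ$ from the requirement that conjugation by $C$ multiply the spectrum of each block of $B$ by $e^{2\pi i m a_l}$. You organize this via clock-and-shift normal forms inside $S(\U(i_1)\times\cdots\times\U(i_r))$, whereas the paper works with the components $N(T)_\sigma$ of the normalizer of the maximal torus indexed by $\sigma\in S_N$ and reads off the $i_l$ as cycle lengths of $\sigma$; these are equivalent bookkeeping devices and neither buys much over the other.

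The genuine gap is in the second half. The paper does not integrate $\tr(A\wedge A\wedge A)$; it invokes Jeffrey's closed formula
\begin{align*}
  \exp(2\pi i\,\CS(A,B,C)) \;=\; \eps(\lambda,\mu)\,\exp\!\left(\tfrac{i}{2}\,\omega\big((A,B),(\lambda,\mu)\big)\right),
  \qquad (\lambda,\mu) \;=\; wU(A,B)-(A,B)\in\Lambda^R\oplus\Lambda^R ,
\end{align*}
and the entire content of the proof is the explicit evaluation of $\langle A,\mu\rangle-\langle B,\lambda\rangle$ and of the theta-characteristic $\eps(\lambda,\mu)=(-1)^{\tr(\lambda\mu)}$ on the normal forms, with $w=\sigma^{-1}$. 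Your proposal defers exactly this: you state that the correction $\tfrac14\big((-1)^{b(1-a_ri_r)\sum i_la_l}-1\big)$ ``records a half-period shift'' whose determination ``requires tracking orientation and framing conventions,'' but that parity computation \emph{is} the proof of the second formula, not a convention to be fixed afterwards --- it is a concrete evaluation of $\tr(\lambda\mu)\bmod 2$ in terms of $m_A=\sum_l i_la_l$ and $a_ri_r$. Moreover, your proposed route of computing $\CS=-\tfrac1{24\pi^2}\int_{M^b}\tr(A\wedge A\wedge A)$ for ``a constant $\frakt$-valued connection on the fibre'' cannot work as stated: such a form has $\tr(A\wedge A\wedge A)=0$ identically, yet the completely reducible connections carry the nonzero values $\tfrac12 b(\sum_la_l^2+\cdots)$. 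The point is that no constant form descends to the mapping torus (the monodromy acts nontrivially on $H^1(\Sigma;\frakt)\cong\frakt\oplus\frakt$), so one must interpolate along $[0,1]$ and the quadratic values arise from the symplectic-area term $\tfrac i2\omega((A,B),(\lambda,\mu))$ together with the $\bbZ/2$-lift ambiguity resolved by $\eps$. (A small additional slip: $1-\tfrac b4$ and $-\tfrac b4$ agree in $\bbR/\bbZ$, so the contrast you draw there is vacuous.)
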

\begin{proof}
  We follow the approach of the proof of Proposition~\ref{su2modulirum}, and in particular we introduce once again $m = -b$ to reduce the total number of signs. The net result is a sign change of the Chern--Simons values. As in Proposition~\ref{su2modulirum}, we consider $(A,B,C) \in \SU(N)^{\times 3}$ with $AB = BA$, $AC = CA$, and $CBC^{-1} = A^mB$, ignoring in our notation the action of simultaneous conjugation when discussing flat connections. By the first relation, we may assume that both $A$ and $B$ lie in the maximal torus $T \subseteq \SU(N)$ consisting of diagonal $\SU(N)$-matrices, and we write $a_j = A_{jj}$, $b_j = B_{jj}$. The normalizer $N(T)$ of $T$ consists of $N!$ components, naturally identified with the elements of the symmetric group $S_N$, as conjugating an element of $T$ by an element of $N(T)$ permutes the diagonal elements accordingly. We write $N(T)_\sigma$ for the component of $N(T)$ corresponding to $\sigma \in S_N$.
  
  \emph{Case 1}. Assume that $A,B \in Z(\SU(N))$. Then $A^m = \Id$, and so $A = \exp(2\pi i\frac{j}{m}) \Id$ for some $j$, $0 \leq j < b$ with $jN/m \in \bbZ$.
  
  \emph{Case 2a}. Assume that $A \notin Z(\SU(N))$, $C \in T$. Then $a_j = \exp(2\pi i \frac{\hat{a}_j}{m})$ for some $\hat{a}_j$ with $0 \leq \hat{a}_j < m$, not all equal, and satisfying $\tfrac{1}{m}\sum_j \hat{a}_j \in \bbZ$.
  
  \emph{Case 2b}. Assume that $A \notin Z(\SU(N))$, $C \notin T$. Then $C \in N(T) \setminus T$, say $C \in N(T)_\sigma$ for some $\sigma \not= \id$. Recall that conjugacy classes of $S_N$ correspond to increasing sequences $1 \leq i_1 \leq \cdots \leq i_r$, $\sum_l i_l = N$, and assume for simplicity that
  \begin{align}
    \label{konjsigma}
		\sigma = (1 \, 2 \, \dots \, n_1)(n_1+1 \, n_1 + 2\, \dots \, n_2)\cdots(n_{r-1} + 1 \, \dots \, n_r),
  \end{align}
  with $n_1 = i_1$, $n_l - n_{l-1} = i_l$. The conjugate cases are handled similarly. Then $AC = CA$ implies that, in block form, we have $A = \diag(\tilde{a}_1 \Id_{i_1}, \dots, \tilde{a}_{r}\Id_{i_r})$ with $\tilde{a}_j \in \U(1)$, $\prod_{j=1}^r \tilde{a}_j^{i_j} = 1$. For $l = 1, \dots, N$, let $k_l$ be the smallest natural number such that $\sigma^{k_l}(l) = l$, that is, $k_l$ is one of the $i_m$ above. Then
  \begin{align*}
		b_l = b_{\sigma^{k_l}(l)} = (C^{k_l}BC^{-k_l})_{ll} = (A^{k_lb}C^{k_l}BC^{-k_l})_{ll} = a_l^{k_lm} b_l.
  \end{align*}
  This implies that $\tilde{a}^{mi_j}_j = 1$ for all $j = 1, \dots, r$. Furthermore, the $b_l$ must satisfy
  \begin{align*}
		b_{\sigma(l)} = A_{ll}^m b_l.
  \end{align*}
  
  \emph{Case 3}.  Let $A \in Z(\SU(N))$, $B \notin Z(\SU(N))$ so that $A = a\Id$ for some $a \in \U(1)$, $a^N = 1$. As before, $C \in N(T)_\sigma$ for some $\sigma \in S_N$, and for every $l$, we have $b_{\sigma(l)} = a^m b_l$. Arguing exactly as above, we find that $a^{mk_l} = 1$ for every $l$, so $a^{m\gcd(k_l)}  = 1$, once again limiting the possible values of $a$, and thus $b_l$, accordingly.
  
  To our knowledge, explicit expressions for $\SU(N)$-Chern--Simons values of general flat connections exist only in a few very special cases. Perhaps most relevant to our study is \cite[Thm. 3.1]{Nis}, in which Nishi calculates the $\SU(N)$-Chern--Simons values of irreducible flat connections on general Seifert manifolds. However, many of the connections considered above are \emph{not} irreducible. In another direction, \cite[Thm. 5.11]{Jef} determines the $\SU(N)$-Chern--Simons values for a number of torus bundles; whereas this result can not be applied directly to our manifolds (as the map $wU-1$, in the notation of Jeffrey, is not invertible), the proof of the theorem can.
  
  Following Jeffrey and the notation of Appendix~\ref{normaliseringer}, we let $\bbA = \frakt \oplus \frakt$ and consider the lattice $\Lambda = \Lambda^R \oplus \Lambda^R \subseteq \bbA$. Thus it is natural to identify $T \times T = \bbA / \Lambda$, and consider elements of $\frakt$ as traceless $N \times N$-matrices with entries being the coefficients of the simple (co)roots spanning $\Lambda^R$ in such a way that $\exp(2 \pi i \cdot) : \frakt \to T$ maps $\Lambda^R$ to $\Id \in T$. Define (see \cite[(A.7)]{Jef}) the basic symplectic form on $\bbA$ by
  \begin{align*}
		\omega((\xi_1,\eta_1),(\xi_2,\eta_2)) = 2\pi(\langle \xi_1,\eta_2\rangle - \langle \xi_2 , \eta_1 \rangle),
  \end{align*}
  and note that the basic inner product is defined such that $\langle \lambda_1,\lambda_2\rangle = - \tr(\lambda_1\lambda_2)$. Finally, we will need the theta-characteristic $\epsilon : \Lambda \to \{ \pm 1\}$ satisfying
  \begin{align*}
		\epsilon(\lambda_1 + \lambda_2) = \epsilon(\lambda_1)\eps(\lambda_2)(-1)^{\omega(\lambda_1,\lambda_2)/(2\pi)},
  \end{align*}
  which we require to furthermore satisfy that $\eps(h_\alpha,h_\beta) = 1$ for any pair of simple roots, all of them together spanning the lattice $\Lambda$. It follows that
  \begin{align*}
		\eps(0,0) = \eps(h_\alpha,0) = \eps(0,h_\alpha) = \eps(\pm h_\alpha, \pm h_\beta) = 1
  \end{align*}
  for simple coroots $h_\alpha,h_\beta$. Moreover, $\eps(\lambda,0) = \eps(0,\lambda) = 1$ for any $\lambda \in \Lambda^R$, as can easily be seen by induction. In general, for $(\lambda, \mu) \in \Lambda$, we have
  \begin{align}
    \label{thetachar}
		\eps(\lambda,\mu) = \eps((\lambda,0)+(0,\mu)) = \eps(\lambda,0)\eps(\mu,0)(-1)^{\omega((\lambda,0),(0,\mu))/(2\pi)} = (-1)^{\langle \lambda, \mu \rangle} = (-1)^{\tr(\lambda \mu)}.
  \end{align}
  Returning to our setup, note that any element $U = \begin{pmatrix} a & b \\ c & d \end{pmatrix} \in \SL(2,\bbZ)$ acts on $\frakt \oplus \frakt$ by
  \begin{align*}
		U(\lambda,\mu) = (a\lambda + b \mu, c \lambda + d \mu).
  \end{align*}
  Let $(A,B,C) \in T \times T \times \SU(N)$ be a flat connection on $M^b$ as described in the various cases above, and abusing notation let $(A,B) \in \frakt \oplus \frakt$ be such that $\exp(2\pi i A) = A$, $\exp(2 \pi i B) = B \in T$. Then in any case, there will exist $w \in W$ such that
  \begin{align}
    \label{weylelement}
		w U(A,B) - (A,B) =: (\lambda,\mu) \in \Lambda^r.
  \end{align}
  Now, the discussion preceding \cite[Thm.~5.11]{Jef} shows that
  \begin{align*}
		\exp(2\pi i \CS(A,B,C)) = \eps(\lambda,\mu)\exp\left(\frac{i}{2} \omega((A,B),(\lambda,\mu))\right).
  \end{align*}
  Combining this with \eqref{thetachar}, we finally obtain
  \begin{align}
    \label{csexplicit}
		\CS(A,B,C) = \frac{1}{2}(\langle A , \mu \rangle - \langle B , \lambda \rangle) - \frac{1}{4}((-1)^{\tr(\lambda \mu)}-1).
  \end{align}
  Notice that this does not depend on $C$, nor -- $\mathrm{mod} \, \bbZ$ -- on the choice of $(A,B) \in \frakt \oplus \frakt$. We now return to our specific cases.
  
  \emph{Case 1}. This really follows as a special case of considerations below but is included here to show the general idea. It suffices to find the value of $\CS$ on one element of the component under consideration, so take $A = (j/m, \dots, j/m,j/b-Nj/m)$, where $j$ satisfies the condition of the conclusion of our considerations, and let $B$ be arbitrary, fitting with the assumptions of this case. Then in the notation of \eqref{csexplicit}, we have
  \begin{gather*}
		w = \id, \quad (\lambda,\mu) = ((0,\dots,0), (j,\dots,j,j-Nj)),
  \end{gather*}
  and recalling that $\langle\cdot,\cdot\rangle = -\tr(\cdot\cdot)$, it follows from \eqref{csexplicit} that
  \begin{align*}
		\CS(A,B,C) = -\frac{1}{2}\left(\frac{Nj^2}{m}-\frac{Nj^2}{m}-\frac{Nj^2}{m}+\frac{N^2j^2}{m}\right) = -\frac{1}{2}N(N-1)\frac{j^2}{m}.
  \end{align*}
  
  \emph{Case 2}. Again, we may as well assume that $\sigma$ is of the form \eqref{konjsigma}: If $(A,B,C)$ is a representation with $C \in N(T)_\sigma$, and $\eta \in S_N$, we find that for any $D \in N(T)_\eta$, $(DAD^{-1},DBD^{-1},DCD^{-1})$ is another representation with $DCD^{-1} \in N(T)_{\eta \sigma \eta^{-1}}$. Assume first that $\sigma \not= \id$. That is,  we can assume that $\sigma(N) = N-k_N+1\not= N$, $\sigma^{-1}(N) =N-1$. Let $a_1,\dots,a_r,b_1,\dots,b_N \geq 0$ be such that if 
  \begin{align*}
		A &= \diag(a_1, \dots,a_1,\dots, a_{N}, \dots, a_r, a_r-m_A), \\
		B &= \diag(b_1, b_2,\dots, b_N, b_N-m_B),
  \end{align*}
  where the $a_l$ is repeated $i_l$ times, and $m_A = \sum_l a_l$, $m_B = \sum_l b_l$, then $\exp(2 \pi i A)$ and $\exp(2\pi i B)$ satisfy the conclusion of Case 2 or 3 above. This means that $m\gcd(i_l)a_l \in \bbZ$, and that we may write the last $i_r = k_N = N - \sigma(N)-1$ entries of $B$ as
  \begin{align*}
		c, \,\, c+a_rm, \,\, \dots,\,\, c+a_rm(i_r-2), c+a_rm(i_r-1) -m_B
  \end{align*}
   for some $c \geq 0$. We find that $w = \sigma^{-1}$ satisfies the condition of \eqref{weylelement} with $(\lambda,\mu)$ given as follows: For any diagonal matrix $(m_{ll})_{l}$ and $\sigma \in S_N$, let $\sigma M$ be the diagonal matrix whose $l$'th diagonal entry is the $m_{\sigma(l)}$, and let $e_l$ be the matrix whose $(l,l)$'th entry is $1$, all others being $0$. Then
  \begin{align*}
		(\lambda,\mu) = (\sigma A - A, \sigma(mA + A) - B) \in \frakt \oplus \frakt.
  \end{align*}
  Now, since $\tr((\sigma A)(\sigma B)) = \tr(AB)$ and $\tr(A (\sigma B)) = \tr((\sigma^{-1}A)B)$, it follows that
  \begin{align*}
	  \langle A , \mu \rangle = -\tr(A(\sigma(mA+B)-B)) = -\tr(A(\sigma mA)) - \tr(A(\sigma B)) + \tr(AB),
  \end{align*}
  and similarly that
  \begin{align*}
	  \langle B , \lambda \rangle = -\tr(B(\sigma A)) + \tr(BA).
  \end{align*}
  Now, note that $b_{\sigma^{-1}(N)} = ba_r(i_r -2)+b_{\sigma(N)}$. Thus we find that
  \begin{align*}
	  \langle A , \mu \rangle - \langle B , \lambda \rangle &= -m\tr(A (\sigma A)) + \tr((\sigma A - \sigma^{-1}A)B) \\
	    &= -m( \sum_l k_l a_l^2 - 2a_l m_A) - m_Ab_\sigma(N) + m_A b_{\sigma^{-1}(N)} \\
	    &= -m \sum_l i_l a_l^2 + m_A ba_n k_N = m(i_r a_r(\sum_l  i_l a_l) - \sum_l i_l a_l^2) \\
	    &= m \sum_l i_l a_l(i_r a_r - a_l).
  \end{align*}
  For the theta-characteristic, note first that
  \begin{align*}
	  \tr(A^2 - A (\sigma A)) &= m_A^2, \\
	  \tr(AB - (\sigma A)B) &= m_A b_{\sigma(N)} - m_A b_N, \\
	  \tr(AB - A(\sigma B)) &= m_A b_{\sigma^{-1}(N)} - m_A b_N.
  \end{align*}
  We find from this that
  \begin{align*}
    (-1)^{\Tr(\lambda \mu)} &= (-1)^{m\tr(A^2 - A (\sigma A)) + \tr(AB - A(\sigma B)) + \tr(AB - (\sigma A)B)} \\
      &=  (-1)^{mm_A - 2m_Ab_N + m_A(b_{\sigma(N)}+b_{\sigma^{-1}(N)})} \\
      &= (-1)^{mm_A - 2m_A(m_{\sigma^{-1}(N)} + ma_N - m_B) + m_A(b_{\sigma(N)} + b_{\sigma(N)})} \\
      &= (-1)^{mm_A - m_A(ma_r(i_r-2)+b_{\sigma(N)})-2ma_rm_A+m_Ab_{\sigma(N)}} \\
      &= (-1)^{mm_A - m_A ma_r i_r} = (-1)^{m(1-a_rk_r) \sum i_l a_l}.
  \end{align*}
  Putting this together, we finally find that
  \begin{align*}
    \CS(A,B,C) = \frac{1}{2}m \sum_l i_l a_l(i_r a_r - a_l) - \frac{1}{4}\left( (-1)^{m(1-a_ri_r) \sum i_l a_l} - 1\right).
  \end{align*}
  In the case $\sigma = \Id$, $\lambda$ vanishes as in Case 1, and $\mu = bA$, and so
  \begin{align*}
    \CS(A,B,C) &= \frac{1}{2} \langle A , mA \rangle = -\frac{1}{2} \left( \sum_l a_l^2 m + mm_a^2 - m_Ama_N-a_Nmm_A\right) \\
     &= -\frac{1}{2} m\left( \sum_l a_l^2 + \left(\sum_l a_l\right)^2 -2a_N\sum_l a_l\right).
  \end{align*}
  %\begin{align*}
%		\langle A,\mu\rangle - \langle B , \lambda \rangle &= Nba^2 + Nab_{\sigma^{-1}(N)} - Nab_{\sigma(N)} \\
	%	&= Nba^2 + Na(c+(k_N-2)ab)-Nac
	%	&= Nba^2(k_N-1).
  %\end{align*}
  %Similarly, one finds that
  %\begin{align*}
%		(-1)^{\tr(\lambda \mu)} &= (-1)^{-Na(ba-Nba+b_N-b_{\sigma(N)})+Na(ba+b_{\sigma^{-1}(N)}-b_N) \\
%		 &= (-1)^{N^2a^2b-2Nab_N+Na(c+(k_N-2)ab)+Nac} = (-1)^{N^2a^2b+2a^2bN+k_Na^2b-2Na^2b} \\
%		 &= (-1)^{Na^2b(N+k_N)}
%	\end{align*}
 % Here, we have used that $c - b_N = -(k_N-1)ba + m \in ba + \bbZ$ and that $Na, m \in \bbZ$.
  %and \eqref{csexplicit} implies that
  %\begin{align*}
%		CS(A,B,C) = \frac{1}{2} Nba^2 (k_N-1) + \frac{1}{4}((-1)^{Nba^2(N+k_N)}+1).
%  \end{align*}
%  In the case where $\sigma  = \id$, we recover exactly the $\CS$-values from Case 1.
  
  \emph{Case 3} follows as the special case of the above calculation where all $a_l$ are equal.
\end{proof}
\begin{example}
  For $N = 2$, we recover the considerations of Proposition~\ref{su2modulirum}. As a non-trivial but perhaps more illuminating example, for $N = 3$, $b = 1$, the moduli space becomes a union of sets of the form
  \begin{gather*}
  \{\Id\} \times Z(\SU(3)) \times \SU(3), \\
		\{ \diag(-1,-1,1) \} \times \{ \diag(b,-b,-b^{-2}) \mid b \in \U(1)\} \times N(T)_{(1 \, 2)}, \\
			\{ \diag(-1,1,-1) \} \times \{ \diag(b,-b^{-2},-b) \mid b \in \U(1)\} \times N(T)_{(1 \, 3)}, \\
	\{ \diag(1,-1,-1) \} \times \{ \diag(-b^{-2},b,-b) \mid b \in \U(1)\} \times N(T)_{(2 \, 3)}, \\
	\{e(\tfrac{1}{3})\Id\} \times \{\diag(e(1/3),e(2/3),1),\diag(e(2/3),1,e(1/3)),\diag(1,e(1/3),e(2/3)) \} \times N(T)_{(1 \, 2 \, 3)}, \\
	\{e(\tfrac{2}{3})\Id\} \times \{\diag(e(2/3),e(1/3),1),\diag(e(1/3),1,e(2/3)),\diag(1,e(2/3),e(1/3)) \} \times N(T)_{(1 \, 2 \, 3)}, \\
	\{e(\tfrac{1}{3})\Id\} \times \{\diag(e(1/3),1,e(2/3)),\diag(e(2/3),e(1/3),1),\diag(1,e(2/3),e(1/3)) \} \times N(T)_{(1 \, 3 \, 2)}, \\
	\{e(\tfrac{2}{3})\Id\} \times \{\diag(e(2/3),1,e(1/3)),\diag(e(1/3),e(2/3),1),\diag(1,e(1/3),e(2/3)) \} \times N(T)_{(1 \, 3 \, 2)}, \\
	\{\Id\} \times \{ \diag(-1,-1,1) \} \times N(T)_{(1 \, 2)}, \\
	\{\Id\} \times \{ \diag(-1,1,-1) \} \times N(T)_{(1 \, 3)}, \\
	\{\Id\} \times \{ \diag(1,-1,-1) \} \times N(T)_{(2 \, 3)},
  \end{gather*}
  up to conjugation. Here, we have used the notation $e(x) = \exp(2\pi i x)$. The resulting Chern--Simons values in this case are $0,\tfrac{3}{4},\tfrac{3}{4},\tfrac{3}{4},\tfrac{2}{3},\tfrac{2}{3},\tfrac{2}{3},\tfrac{2}{3},0,0,0$ respectively, as can be verified by the method explained in the proof.

\end{example}

\begin{cor}
  \label{sunasympconjec}
  Conjecture~\ref{asympconjec} holds for the 3-manifolds $M^b$ when $G = \SU(3)$.
\end{cor}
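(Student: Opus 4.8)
The plan is to imitate the proof of Corollary~\ref{su2cs}. Starting from the closed formula for $Z_k(M^b)$ furnished by Theorem~\ref{invariantssun} (for $b\neq 0$; the case $b=0$ is handled separately at the end), I would collect the terms according to the oscillatory phase $\exp(2\pi i rc)$ they carry, expand the remaining $r$-dependence as a convergent power series, read off the exponents $d_c$, and finally match the phases $c$ against the Chern--Simons values listed in Proposition~\ref{suncs} with $N=3$.

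Concretely, the formula of Theorem~\ref{invariantssun} is a sum of three pieces carrying respectively a prefactor $r$, $\sqrt r$, and $1$, multiplied by an overall $\exp(2\pi i/r)$. Rewriting $\exp\bigl(\pi i r\tfrac{3n^2}{2b}\bigr)=\exp\bigl(2\pi i r\tfrac{3n^2}{4b}\bigr)$, each piece becomes a sum of the form $\sum(\text{positive rational})\cdot\exp(2\pi i rc)$ over finitely many rationals $c$; since the phases in the double sum depend on $(n,m)$ only modulo $b$ and those in the single sum on $n$ only modulo $2b$, one may reduce the summation ranges to $(\bbZ/b\bbZ)^2$ and $\bbZ/2b\bbZ$. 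Grouping all contributions by their phase yields
\[
  Z_k(M^b)=\exp(2\pi i/r)\sum_{c}\exp(2\pi i rc)\bigl(\alpha_c\,r+\beta_c\sqrt r+\gamma_c\bigr),
\]
where, for each phase $c$ that actually occurs, $\alpha_c$ is a nonzero multiple of the number of pairs $(n,m)$ realizing $c$, $\beta_c$ a nonzero multiple of the number of $n$ realizing it, and $\gamma_c$ a nonzero combination of $\tfrac{1}{3}$ and $\tfrac{2}{3}$. Thus $d_c=1$ if $\alpha_c\neq 0$, $d_c=\tfrac{1}{2}$ if $\alpha_c=0\neq\beta_c$, and $d_c=0$ otherwise; in all cases $d_c\in\tfrac{1}{2}\bbZ$, and $d=\max_c d_c=1$, since the trivial connection contributes to the $r$-term. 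Substituting the Taylor series of $\exp(2\pi i/r)$ (a power series in $r^{-1}$, hence in $r^{-1/2}$) and absorbing it into each summand puts $Z_k(M^b)$ in the form $\sum_c\exp(2\pi i rc)\,r^{d_c}b_c\bigl(1+\sum_{l\ge 1}a_c^l r^{-l/2}\bigr)$ with $b_c\neq 0$; the series being convergent, truncating at level $L$ gives an error $O(r^{d-(L+1)/2})$, exactly the bound required in Conjecture~\ref{asympconjec}. For $b=0$ one instead invokes Proposition~\ref{3torusprop}: $Z_k(M^0)=\tfrac{1}{2}(r-1)(r-2)$ is a polynomial in $r$, and the Chern--Simons action vanishes identically on $\calM(S^1\times S^1\times S^1)$ (all holonomies commute, hence lie in a common maximal torus, on which the Chern--Simons invariant is trivial), so the conjecture holds with the single term $d_0=2$.

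It remains to see that every phase $c$ appearing above is of the form needed in Conjecture~\ref{asympconjec}, i.e. $c\in\CS(\calM(M^b))$; I would check this against Proposition~\ref{suncs}. A short computation shows that the completely reducible values $\tfrac{1}{2} b\bigl(\sum_l a_l^2+(\sum_l a_l)^2-2a_3\sum_l a_l\bigr)$, written with $a_l=p_l/b$, collapse modulo $\bbZ$ to $\tfrac{1}{b}(p_1^2+p_2^2+p_1p_2)$, whose set of values over $p_1,p_2\in\bbZ/b\bbZ$ coincides, via $p_2\mapsto -p_2$, with $\{\tfrac{1}{b}(n^2+m^2-nm)\bmod\bbZ\}$ --- precisely the phases of the $r$-term. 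Likewise the partially reducible values for the block type $(1,2)$ reproduce exactly the phases $\tfrac{3n^2}{4b}\bmod\bbZ$ of the $\sqrt r$-term, and the isolated (in particular irreducible) connections whose monodromy data has $C$ a $3$-cycle have Chern--Simons value $-\tfrac{b}{3}\bmod\bbZ$, accounting for the constant phases $0$ and $-b/3$. (Throughout one must keep track of the orientation-reversal sign built into Proposition~\ref{suncs} via $m=-b$, and of the theta-characteristic correction; these are bookkeeping only.) For the conjecture one needs only that every phase occurring in the formula is a Chern--Simons value, the remaining $c_j\in\CS(\calM)$ being assigned $b_j=0$; that no further values occur is the converse clause of Proposition~\ref{suncs}.

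The main obstacle is precisely this last matching step. The formula of Theorem~\ref{invariantssun} and the list of Chern--Simons values in Proposition~\ref{suncs} are derived by entirely different routes, and verifying that the quadratic residues $\tfrac{1}{b}(n^2+m^2-nm)$, $\tfrac{3n^2}{4b}$, and the constant $-b/3$ exhaust --- and are exhausted by --- the completely reducible, partially reducible, and isolated strata respectively requires an elementary but genuinely fiddly analysis of which residues these quadratic forms represent, compounded by the sign conventions. Everything else is formal and runs in parallel with the $\SU(2)$ argument of Corollary~\ref{su2cs}.
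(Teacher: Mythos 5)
Your proposal is correct and follows essentially the same route as the paper: the paper's proof of Corollary~\ref{sunasympconjec} likewise consists of matching the three phase families of Theorem~\ref{invariantssun} against Proposition~\ref{suncs} via exactly the parameter choices you describe (irreducible: $i_1=3$, $a_1=1/3$ giving $-b/3$; partially reducible of block type $(1,2)$: $a_2=n/(2b)$ giving $3n^2/(4b)$; completely reducible: $a_l=n/b,m/b,-(n+m)/b$ giving $\tfrac{1}{b}(n^2+m^2+nm)$, identified with $\tfrac{1}{b}(n^2+m^2-nm)$ by $m\mapsto -m$), with the formal grouping-by-phase and Taylor-expansion bookkeeping carried over from the $\SU(2)$ argument of Corollary~\ref{su2cs}. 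Your explicit treatment of $b=0$ via Proposition~\ref{3torusprop} is a harmless addition the paper omits.
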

\begin{proof}
  We will show that for $N = 3$, the values arising from Proposition~\ref{suncs} are exactly the phases of the expression in Theorem~\ref{invariantssun}.
  
  \emph{The irreducible case}: In the notation of the second half of Proposition~\ref{suncs}, let $r=1, i_1 = 3, a_1 = 1/3$. Then one finds the Chern--Simons value $-\tfrac{1}{3}b$, corresponding to the phase of the term whose growth rate of $r$ is $0$ in Theorem~\ref{invariantssun}.
  
  \emph{The partially reducible case}: Considering again the second half of the proposition, let $r=2, i_1 = 1, i_2 = 2$, let $a_2 = n/(2b)$ for $n \in \{0,\dots,2b-1\}$, and let $a_1 = (b-n)/b$. Then through a short computation, the proposition gives the Chern--Simons value $\tfrac{3n^2}{4b}$, corresponding in Theorem~\ref{invariantssun} to the terms of growth rate $\tfrac{1}{2}$.
  
  \emph{The completely reducible case}: Finally, consider the first half of the proposition and let $a_1 = n/b$, $a_2 = m/b$, and $a_3 = -(n+m)/b$, where $n,m \in \{0,\dots,3b-1\}$. Then we find the Chern--Simons values
  \begin{align*}
		\frac{1}{b}(n^2+m^2+nm),
  \end{align*}
  and the Corollary follows from the observation that
  \begin{align*}
	  &\left\{ \exp\left(2\pi i r \frac{n^2+m^2+mn}{b}\right) \relmiddle| n,m = 0, \dots, 3b-1 \right\} \\
	    &= \left\{ \exp\left(2\pi i r \frac{n^2+m^2-mn}{b}\right) \relmiddle| n,m = 0, \dots, 3b-1 \right\}.
  \end{align*}
  
\end{proof}
\begin{rem}
  \label{generelsunremark}
  From this argument, it is clear what should be expected to happen in the general case $G = \SU(N)$, $N > 3$. Namely, the sum $\sum_{\lambda \in \tilde{P}_r} g(\lambda)$ of the proof of Theorem~\ref{invariantssun} is expressed in terms of sums of elements of various affine subspaces of the weight space, each of these giving a contribution of a particular order in $r$, and whose corresponding phases are the Chern--Simons values of connections whose invariant subspaces have dimensions depending on the given subspace of the weight space under consideration: In the case $N = 3$ irreducible connections, partially reducible, and completely reducible connections correspond to $0$-dimensional affine subspaces, $1$-dimensional ones, and $2$-dimensional subspaces (the alcoves themselves) as shown in Figure~\ref{su3invariantfigur}. %That it works in the completely reducible case amounts to showing that { \sum_i^2 - \sum_i n_i n_{i+1} \mod b } = { \sum n_i^2 + \sum_{i \not= j} n_i n_j \mod b }, where in both sets $0 \leq n_i \leq 2Nb-1$ and $0 \leq i \leq N-1$. This should be easy to prove by induction.
\end{rem}

\section{Further remarks}
\subsection{Stretch factors of Anosov homeomorphisms}
\label{pseudoafsnit}
Having discussed in some detail the asymptotic behaviour of the $\SU(2)$-quantum invariants of Dehn twist mapping tori, we now make a few comments on what will happen for the mapping tori of Anosov homeomorphisms, whose invariants are described in Theorem~\ref{jeffreythm}. The growth rates $d$ of Conjecture~\ref{asympconjec} are $0$ for these $3$-manifolds, and the leading order coefficients have the following interpretation, which is really nothing but a complicated procedure for calculating eigenvalues of $2 \times 2$-matrices.
\begin{prop}
  \label{stretchfactorprop}
  Let $\phi : \Sigma_1 \to \Sigma_1$ be an Anosov mapping class of the closed torus, given by the $\SL(2,\bbZ)$ matrix
  \begin{align*}
    \phi = \begin{pmatrix} a & b \\ c & d \end{pmatrix}.
  \end{align*}
  Then the stretch factor $\lambda$ of $\phi$ is given by
  \begin{align*}
    \lambda = \abs{ Z_{n(a^2+2ad+d^2-4)-2}(T_\phi)}^{-2}
  \end{align*}
  for all $n \in \bbN$.
\end{prop}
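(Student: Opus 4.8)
The plan is to substitute the prescribed level directly into Jeffrey's closed formula for $Z_k(T_U)$ (Theorem~\ref{jeffreythm}) and observe that at these levels every Gauss-type exponential sum in that formula degenerates to a trivial count. Write $t = \Tr(\phi) = a+d$; then $a^2+2ad+d^2-4 = t^2-4 = (t-2)(t+2)$, the Anosov hypothesis is exactly $\abs{t} > 2$ (which also forces $c \neq 0$, since $c = 0$ together with $\det\phi = 1$ would give $t = \pm 2$), and the chosen level $k = n(t^2-4)-2$ is non-negative with $r = k+2 = n(t^2-4)$. The arithmetic heart of the matter is that for each choice of sign, $r/(d+a\mp 2) = r/(t\mp 2) = n(t\pm 2)$ is an integer; hence the exponent
\begin{align*}
  2\pi i r\,\frac{-c\gamma^2+(a-d)\gamma\beta+b\beta^2}{d+a\mp 2}
\end{align*}
equals $2\pi i$ times the integer $n(t\pm 2)\bigl(-c\gamma^2+(a-d)\gamma\beta+b\beta^2\bigr)$, so every summand is $1$ and the inner double sum collapses to $\abs{c}\cdot\abs{d+a\mp 2}$.

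Feeding this back into Theorem~\ref{jeffreythm}, the factor $\abs{c}$ cancels and each $\pm$-term contributes $\pm\tfrac12\sqrt{\abs{d+a\mp 2}} = \pm\tfrac12\sqrt{\abs{t\mp 2}}$; since $\abs{t} > 2$ one also has $\sgn(d+a\mp 2) = \sgn(t)$ regardless of the sign, so
\begin{align*}
  Z_k(T_\phi) = \exp\!\left(\frac{2\pi i\psi(\phi)}{4r}\right)\sgn(t)\,\frac{1}{2}\left(\sqrt{\abs{t-2}}-\sqrt{\abs{t+2}}\right).
\end{align*}
Because $\psi(\phi)$ is real and the framing anomaly contributes only a root of unity, the leading exponential has modulus $1$, so $\abs{Z_k(T_\phi)} = \tfrac12\bigl\lvert\sqrt{\abs{t+2}}-\sqrt{\abs{t-2}}\bigr\rvert$. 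Squaring the reciprocal and using $\abs{t-2}\cdot\abs{t+2} = t^2-4$ and $\abs{t-2}+\abs{t+2} = 2\abs{t}$ (both valid because $\abs{t} > 2$) gives
\begin{align*}
  \abs{Z_k(T_\phi)}^{-2} = \frac{4}{2\abs{t}-2\sqrt{t^2-4}} = \frac{\abs{t}+\sqrt{t^2-4}}{2}.
\end{align*}

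Finally I would identify the right-hand side with the stretch factor: the eigenvalues of $\phi \in \SL(2,\bbZ)$ are the roots of $\mu^2-t\mu+1$, namely $\tfrac12(t\pm\sqrt{t^2-4})$, so the eigenvalue of maximal modulus has absolute value $\tfrac12(\abs{t}+\sqrt{t^2-4})$, and for a linear Anosov diffeomorphism of the torus this spectral radius is precisely the dilatation $\lambda$ (the expansion rate along the unstable foliation). Hence $\lambda = \abs{Z_{n(a^2+2ad+d^2-4)-2}(T_\phi)}^{-2}$ for every $n \in \bbN$.

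I do not expect a genuine obstacle here: once one spots the divisibility of $r$ by $d+a\mp 2$, the argument is bookkeeping. The only points demanding care are the sign conventions --- reading $\sgn(d+a\mp 2)$ as $\sgn(t)$, keeping both $\pm$-terms with the (orientation- and framing-independent) absolute value, and checking that the case $t < -2$ yields the same final formula as $t > 2$ --- together with the routine verification that the chosen level genuinely lies in the range where Theorem~\ref{jeffreythm} applies (non-negativity of $k$ and $c\neq 0$), both of which follow at once from $\abs{t}>2$.
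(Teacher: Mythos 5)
Your proposal is correct and follows essentially the same route as the paper's own proof: substitute the level $k = n(t^2-4)-2$ into Jeffrey's formula, observe that $r = n(t-2)(t+2)$ makes every exponent an integer multiple of $2\pi i$ so the double Gauss sum collapses to $\abs{c}\cdot\abs{d+a\mp 2}$, and then compute $\abs{Z_k}^{-2} = \tfrac{1}{2}(\abs{t}+\sqrt{t^2-4}) = \lambda$. Your additional remarks on signs, $c\neq 0$, and non-negativity of the level are sound bookkeeping that the paper leaves implicit.
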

\begin{proof}
  In the torus case, the stretch factor $\lambda$ is nothing but the spectral radius of $\phi$. In other words,
  \begin{align*}
    \lambda = \max_{\pm} \left\lvert\frac{(a+d)\pm\sqrt{(a+d)^2 - 4}}{2}\right\rvert.
  \end{align*}
  By Theorem~\ref{jeffreythm},
  \begin{align*}
    \abs{Z_{n(a^2+2ad+d^2-4)-2}&(T_\phi)} = \abs{Z_{n(a+d+2)(a+d-2)-2}(T_\phi)} \\
     =&\, \left\lvert e^{2\pi i\psi(U)/(4n(a+d+2)(a+d-2))} \sum_{\pm} \pm \frac{1}{2\lvert c \rvert	\sqrt{\abs{d+a \mp 2}}} \sum_{\beta = 0}^{\lvert c \rvert - 1}\sum_{\gamma=1}^{\abs{d+a \mp 2}} 1 \right\rvert \\
     =&\, \left\lvert\sum_{\pm} \pm \frac{1}{2\sqrt{\abs{d+a\mp 2}}} \abs{d + a \mp 2}\right\rvert \\
     =&\, \left\lvert\frac{\sqrt{\abs{d+a- 2}} - \sqrt{\abs{d + a +2}}}{2}\right\rvert.
  \end{align*}
  It follows that
  \begin{align*}
    \abs{Z_{n(a^2+2ad-4)-2}(T_\phi)}^2 = \min_{\pm} \left\lvert\frac{1}{2} \left(a+d\pm\sqrt{(a+d)^2-4}\right)\right\rvert = \lambda^{-1}.
  \end{align*}
\end{proof}

This result should also be seen in view of Question~1.1~(2) of \cite{AMU} which asks if one may determine the stretch factors of general pseudo-Anosov homeomorphisms by using the quantum representations. In \cite[Cor.~5.8]{AMU}, the authors show that this is the case for the four punctured sphere.

In hopes of generalizing this result to other surfaces, let us briefly discuss a different reason why Proposition~\ref{stretchfactorprop} is true. In her discussion of the semi-classical approximation conjecture for torus bundles, Jeffrey notes the following: for mapping tori of Anosov torus homeomorphisms, the mapping torus moduli space may be understood in terms of fixed points of the action of the Anosov on the moduli space, the path integral localizes to the fixed point set, and the Reidemeister torsion of a connection is given in terms of the differential of the moduli space action (see \cite[Prop.~3.10]{Jefphd}, \cite[Prop.~5.6]{Jef}). Now, in the torus case for $G = \SU(2)$, as we saw in the previous sections, the action on the moduli space $(T \times T) / W$ is essentially the action on the torus itself, and it is therefore not surprising that we can recover the stretch factor: the sequence chosen in Proposition~\ref{stretchfactorprop} is set up to reduce the quantum invariant to a sum of the Reidemeister torsion contributions.

On the other hand, the fixed point set itself carries information about the dynamics of the Anosov in the sense of the following proposition.
\begin{prop}
  For $G = \SU(2)$ and $\phi \in \SL(2,\bbZ)$ an Anosov mapping class on the torus with stretch factor $\lambda > 1$, let $c_m$ denote the number of fixed points of the action of $\phi^m$ on the moduli space of the torus. Then $c_{m+1}/c_m \to \lambda$ as $m \to \infty$.
\end{prop}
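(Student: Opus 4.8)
The plan is to reduce the statement to counting torsion points on a $2$-torus. Recall (as used in Section~\ref{pseudoafsnit} and in the proof of Proposition~\ref{su2modulirum}) that the $\SU(2)$-moduli space of the torus $\Sigma_1$ is the pillowcase $P = T^2/\iota$, where $T^2 = \bbR^2/\bbZ^2$ and $\iota$ is the involution $x \mapsto -x$, and that a mapping class $\phi \in \MCG(\Sigma_1) \isom \SL(2,\bbZ)$ acts on $P$ as the linear action of $\phi$ on $T^2$ descended through the quotient map $q : T^2 \to P$ (whether $\phi$ or $\phi^{-1}$ acts will be immaterial, since $\abs{\det(\cdot - I)}$ is insensitive to inverse and transpose). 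A point $q(x) \in P$ is fixed by $\phi^m$ precisely when $\phi^m x \equiv \pm x \pmod{\bbZ^2}$, i.e. when $x$ lies in one of the two finite subgroups
\[
  F_m^\pm \;=\; \ker\bigl(\phi^m \mp I : T^2 \to T^2\bigr) \;=\; \{\, x \in T^2 \mid (\phi^m \mp I)x \in \bbZ^2 \,\}.
\]
Since $\phi$ is Anosov, $\abs{\tr\phi} > 2$, so no $\phi^m$ has $1$ or $-1$ as an eigenvalue; hence $\det(\phi^m \mp I) \ne 0$, and the standard fact (via Smith normal form) that $\#\ker(M : T^2 \to T^2) = \abs{\det M}$ for $M \in \Mat_2(\bbZ)$ with $\det M \ne 0$ gives $\#F_m^\pm = \abs{\det(\phi^m \mp I)}$.

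Next I would pass from $T^2$ to $P$. The fixed-point set of $\phi^m$ on $P$ is exactly $q(F_m^+ \cup F_m^-)$. Both $F_m^\pm$ are $\iota$-invariant, and for any finite $\iota$-invariant subset $S \subseteq T^2$ one has $\#q(S) = \tfrac12\bigl(\#S + \#(S \cap T^2[2])\bigr)$, where $T^2[2]$ is the $4$-element subgroup of $2$-torsion points (the fixed set of $\iota$): every $\iota$-orbit in $S$ has size $2$ except those which are single $2$-torsion points. Moreover $F_m^+ \cap F_m^- \subseteq T^2[2]$, since if $(\phi^m - I)x$ and $(\phi^m + I)x$ are both integral then so is their difference $2x$. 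Combining these remarks and absorbing all contributions from $T^2[2]$ (at most $4$, uniformly in $m$) into an error term,
\[
  c_m \;=\; \#q(F_m^+ \cup F_m^-) \;=\; \tfrac12\bigl(\#F_m^+ + \#F_m^-\bigr) + O(1) \;=\; \tfrac12\bigl(\abs{\det(\phi^m - I)} + \abs{\det(\phi^m + I)}\bigr) + O(1).
\]

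Finally I would evaluate the determinants through the eigenvalues $\mu, \mu^{-1}$ of $\phi$, normalised so $\abs{\mu} = \lambda > 1$. From $\det(\phi^m - \epsilon I) = (\mu^m - \epsilon)(\mu^{-m} - \epsilon) = 2 - \epsilon\,\tr(\phi^m)$ for $\epsilon \in \{\pm 1\}$, with $\tr(\phi^m) = \mu^m + \mu^{-m}$, a short sign analysis — the cases $\mu = \lambda$ and $\mu = -\lambda$, the latter split by the parity of $m$ — shows that in every case the two absolute values are, in some order, $\lambda^m + \lambda^{-m} - 2$ and $\lambda^m + \lambda^{-m} + 2$, so their sum is $2(\lambda^m + \lambda^{-m})$. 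Hence $c_m = \lambda^m + \lambda^{-m} + O(1)$, and therefore
\[
  \frac{c_{m+1}}{c_m} \;=\; \frac{\lambda^{m+1} + \lambda^{-(m+1)} + O(1)}{\lambda^m + \lambda^{-m} + O(1)} \;\longrightarrow\; \lambda \qquad (m \to \infty),
\]
which is the claim. I do not expect a serious obstacle: the only points needing care are the bookkeeping at the pillowcase corners (handled by the orbit-counting identity above) and the sign of $\tr(\phi^m)$ in the negative-trace case, both elementary. The single conceptual input, already recorded in the paper, is the identification of the mapping class action on the torus moduli space with the linear action on $T^2$.
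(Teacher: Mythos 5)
Your argument is correct and follows essentially the same route as the paper: the paper simply quotes Jeffrey's fixed-point count $\abs{2+a+d}+\abs{2-a-d}-n$ (which is exactly your $\abs{\det(\phi^m+I)}+\abs{\det(\phi^m-I)}$ up to the bounded correction from $2$-torsion) and then passes to the eigenvalue asymptotics. The only difference is that you derive that count from scratch via the pillowcase description and Smith normal form rather than citing \cite{Jef}, and your bookkeeping at the corners and in the negative-trace case is sound; the resulting overall constant in front of $\lambda^m$ is irrelevant to the limit of the ratio.
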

\begin{proof}
	By the discussion preceding \cite[Prop.~5.12]{Jef}, the number of fixed points of the action of $U = \begin{pmatrix} a & b \\ c & d \end{pmatrix} \in \SL(2,\bbZ)$, $\tr U \not= \pm 2$, is
	\begin{align*}
		\abs{2+a+d} + \abs{2-a-d} - n,
  \end{align*}
  where $n \leq 4$ is the number of points in $\bbR^2/\bbZ^2$ fixed by both $U$ and $-U$. Now of course, we have $a + d = \lambda + \lambda^{-1}$, where $\lambda$ is an eigenvalue of $U$, and the result follows since if $\lambda(\phi^m)$ denotes the largest eigenvalue of $\phi^m$, then
  \begin{align*}
		\lim_{m \to \infty} \frac{c_{m+1}}{c_m} = \lim_{m \to \infty} \frac{\lambda(\phi^{m+1})}{\lambda(\phi^m)} = \lambda.
  \end{align*}
\end{proof}
The quantum counterpart of this proposition is the following.
\begin{prop}
  \label{sfconjectorus}
  Let $G = \SU(2)$, and let $\phi \in \SL(2,\bbZ)$ be an Anosov mapping class on the torus with stretch factor $\lambda > 1$. Let $k_n = a_n^2+2a_nd_n+d_n^2-6$, where $a_n$ and $d_n$ are defined by $\phi^n = \begin{pmatrix} a_n & b_n \\ c_n & d_n \end{pmatrix}$. Then
  \begin{align*}
		\lim_{n \to \infty} \sqrt[n]{\abs{Z_{k_n}(T_{\phi^n})}} = \lambda^{-1/2}.
  \end{align*}
\end{prop}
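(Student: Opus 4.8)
The plan is to recognize that the sequence $k_n$ has been engineered so that this statement is nothing but Proposition~\ref{stretchfactorprop} applied to the powers $\phi^n$.

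First I would record the relevant dynamics. Let $\lambda,\lambda^{-1}$ be the eigenvalues of $\phi$, with $\lambda>1$ the stretch factor (spectral radius). The eigenvalues of $\phi^n$ are then $\lambda^n,\lambda^{-n}$, so $\abs{\tr\phi^n}=\abs{\lambda^n+\lambda^{-n}}>2$; hence $\phi^n$ is again an Anosov mapping class of the torus, with stretch factor $\lambda(\phi^n)=\lambda^n$. I would also observe that an Anosov element of $\SL(2,\bbZ)$ is never triangular — an integer matrix of determinant $1$ with a vanishing off-diagonal entry has diagonal entries $\pm1$, hence trace $\pm2$ — so $c_n\neq0$ and Theorem~\ref{jeffreythm}, and with it Proposition~\ref{stretchfactorprop}, genuinely applies to $T_{\phi^n}$; since $\abs{\tr\phi^n}\geq3$ one also has $k_n=(\tr\phi^n)^2-6\geq3$, a legitimate nonnegative level.

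The key observation is then simply that
\begin{align*}
  k_n = a_n^2 + 2a_n d_n + d_n^2 - 6 = \bigl(a_n^2 + 2a_n d_n + d_n^2 - 4\bigr) - 2,
\end{align*}
which is exactly the index $m\,(a'^2+2a'd'+d'^2-4)-2$ appearing in Proposition~\ref{stretchfactorprop} for the matrix $\phi^n$ (with entries $a'=a_n$, $d'=d_n$) and the free parameter $m=1$. Applying that proposition to $\phi^n$ therefore yields
\begin{align*}
  \lambda^n = \lambda(\phi^n) = \abs{Z_{k_n}(T_{\phi^n})}^{-2},
\end{align*}
i.e. $\abs{Z_{k_n}(T_{\phi^n})}=\lambda^{-n/2}$ \emph{exactly}, for every $n$. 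Taking $n$-th roots gives $\sqrt[n]{\abs{Z_{k_n}(T_{\phi^n})}}=\lambda^{-1/2}$ for all $n$, and the stated limit is immediate.

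There is no genuine obstacle here: the statement is a bookkeeping corollary of Proposition~\ref{stretchfactorprop}, and the only points deserving a word of care are that $\phi^n$ remains Anosov (so that the earlier formula is available) and that the two indexings coincide. Should one prefer to argue from scratch, one substitutes $U=\phi^n$ and $r=k_n+2=(\tr\phi^n+2)(\tr\phi^n-2)$ directly into Theorem~\ref{jeffreythm}: this choice of $r$ annihilates the $\gamma,\beta$-dependence in both exponential sums, collapsing each to $\abs{c_n}\cdot\abs{d_n+a_n\mp2}$, and the prefactors $\tfrac{1}{2\abs{c_n}\sqrt{\abs{d_n+a_n\mp2}}}$ then produce $\bigl|\tfrac{1}{2}(\sqrt{\abs{\tr\phi^n-2}}-\sqrt{\abs{\tr\phi^n+2}})\bigr|$, whose square equals $\min_{\pm}\bigl|\tfrac{1}{2}(\tr\phi^n\pm\sqrt{(\tr\phi^n)^2-4})\bigr|=\lambda(\phi^n)^{-1}=\lambda^{-n}$, the same conclusion.
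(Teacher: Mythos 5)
Your proposal is correct and follows essentially the same route as the paper: the paper's own proof likewise reduces to the computation of Proposition~\ref{stretchfactorprop} applied to $\phi^n$ (whose level $k_n=(\tr\phi^n)^2-4-2$ is exactly the index there with parameter $1$), obtaining $\abs{Z_{k_n}(T_{\phi^n})}=\bigl\lvert\tfrac{1}{2}(\sqrt{\abs{d_n+a_n-2}}-\sqrt{\abs{d_n+a_n+2}})\bigr\rvert$. Your version is marginally cleaner at the end, since you extract the exact identity $\abs{Z_{k_n}(T_{\phi^n})}=\lambda^{-n/2}$ rather than concluding via $\lim_{n\to\infty}a_n/a_{n-1}=\lambda$, and your checks that $\phi^n$ stays Anosov with $c_n\neq 0$ and $k_n\geq 0$ are worthwhile points the paper leaves implicit.
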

\begin{proof}
  As in the proof of Propostion~\ref{stretchfactorprop}, we find that
  \begin{align*}
		\abs{Z_{k_n}(T_{\phi^n})} = \left\lvert \frac{\sqrt{\abs{d_n+a_n- 2}} - \sqrt{\abs{d_n + a_n +2}}}{2} \right\rvert.
  \end{align*}
  The result follows immediately, as $\lim_{n\to\infty} a_n/a_{n-1} = \lim_{n\to\infty} d_n/d_{n-1} = \lambda$.
\end{proof}
This proposition illustrates how one might hope to illuminate the general AMU conjecture by considering the quantum representations of iterates of pseudo-Anosovs. In general, we propose the following.
\begin{conjec}[Stretch Factor Conjecture]
  \label{stretchconjec}
  \index{Stretch Factor Conjecture}Let $G = \SU(N)$, let $\Sigma_g$ be a closed genus $g$ surface, and let $\phi \in \Gamma_g$ with stretch factor $\lambda$. Then there exists a rational number $c \in \bbQ$, and a sequence $\{k_n\}_n \subseteq \bbN$ such that
  \begin{align*}
		\lim_{n \to \infty} \sqrt[n]{\abs{Z^G_{k_n}(T_{\phi^n})}} = \lambda^{c}.
  \end{align*}
\end{conjec}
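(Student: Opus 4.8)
The plan is to read the left-hand side off a semiclassical trace formula and then extract $\lambda$ from the resulting sum over periodic points. Recall from Section~\ref{quantuminvariants} that $Z^G_k(T_{\phi^n})=\tr\rho_k(\phi)^n$, and that $\rho_k(\phi)$ is the operator attached by geometric quantization to the symplectomorphism $\phi$ of the moduli space $\calM=\calM(\Sigma_g,G)$ of flat $G$-connections on $\Sigma_g$; for fixed $k$ this operator is (projectively) unitary, so $\sqrt[n]{\,\abs{Z^G_k(T_{\phi^n})}\,}\to 1$, which is exactly why a sequence $k_n\to\infty$ is forced. First I would show, for $n\gg0$, that the pseudo-Anosov dynamics makes every fixed point of $\phi^n$ on $\calM$ isolated and non-degenerate, with acyclic associated flat connection on $T_{\phi^n}$ — no positive-dimensional invariant subvariety can be fixed, and $\Id-d(\phi^n)$ is invertible on $H^1(\Sigma_g,\Ad\rho)$ for $n$ large — possibly after peeling off a reducible stratum of bounded dimension that contributes only lower-order corrections, as the ``$-\tfrac12$'' terms do in Theorem~\ref{stnmedlinks} and Corollary~\ref{stnudenlinks}. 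The AEC growth rate $d(n)$ of $T_{\phi^n}$ then stabilizes for $n\gg0$; it vanishes for hyperbolic torus bundles in Theorem~\ref{summarythm}.

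Granting this, I would invoke the leading-order asymptotics of $Z^G_k$ — Conjecture~\ref{asympconjec} itself when available, or unconditionally Andersen's geometric formula~\cite{Agenerel} — in the shape of a Gutzwiller-type expansion
\begin{align*}
  Z^G_{k}(T_{\phi^n}) \;\sim\; \sum_{[\rho]\in\mathrm{Fix}(\phi^n)} \epsilon([\rho])\, e^{2\pi i r\,\CS([\rho])}\, \abs{\det\bigl(\Id-d(\phi^n)_{[\rho]}\bigr)}^{-1/2},
\end{align*}
where $\epsilon([\rho])$ is a spectral-flow root of unity encoding the Maslov index of the orbit, and the amplitude is the Reidemeister torsion of the mapping-torus flat connection, equal to the stability determinant of the periodic point by Fried's theorem. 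I would then choose the levels $k_n$: for $n\gg0$ the contributing connections are rigid, hence have rational Chern--Simons invariant with denominator controlled by $n$ and the monodromy, so taking $r_n$ a least common denominator makes every phase $e^{2\pi i r_n\CS([\rho])}$ equal $1$ and kills all Chern--Simons cancellation — the analogue of the collapse of the Gauss sum behind Theorem~\ref{jeffreythm} and Proposition~\ref{stretchfactorprop}. One checks that $r_n$ grows at a controlled exponential rate (in the torus case $r_n$ is a multiple of $\det(\Id-\phi^n)\det(\Id+\phi^n)$, of size $\asymp\lambda^{2n}$), so that the factor $r_n^{d(n)}$ contributes at most a further rational multiple of $\log\lambda$ to the exponent, harmlessly absorbed into $c$.

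After aligning phases one is left with $\abs{Z^G_{k_n}(T_{\phi^n})}\asymp\bigl|\sum_{[\rho]}\epsilon([\rho])\,\abs{\det(\Id-d(\phi^n)_{[\rho]})}^{-1/2}\bigr|$, and the conjecture reduces to a purely dynamical statement: the exponential growth rate in $n$ of this \emph{signed} sum over $\mathrm{Fix}(\phi^n)$ equals $c\log\lambda$ for some $c\in\bbQ$. It matters that this is a delicate cancellation, not a dominant-term estimate: already for $\Sigma_1$, $G=\SU(2)$ the sum collapses to $\tfrac12\bigl(\sqrt{\abs{\det(\Id-\phi^n)}}-\sqrt{\abs{\det(\Id+\phi^n)}}\bigr)$, two contributions of size $\asymp\lambda^{n/2}$ whose difference is $\asymp\lambda^{-n/2}$ — which is exactly how $c=-\tfrac12$ arises in Proposition~\ref{sfconjectorus}. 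One would bound the growth rate above by combining the Lefschetz-number growth of $\phi|_{\calM}$ with the Maslov-index signs of the amplitudes, and below by isolating the surviving term; the content is then that the spectrum of $d\phi$ on $H^1(\Sigma_g,\Ad\rho)$, uniformly over the components of $\mathrm{Fix}(\phi^n)$, is governed by Thurston's stretch factor, giving a rational exponent because $\dim\calM$ does not depend on $n$ (for the torus this is visible from $\phi$ acting on $\calM(\Sigma_1,\SU(2))=(T\times T)/W$ essentially as $\phi\oplus\phi$ on $\bbR^2\oplus\bbR^2$).

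The hard part is twofold. For $g\ge2$ the manifold $T_{\phi^n}$ is hyperbolic, so its leading asymptotics generically feel $\SL(N,\bbC)$-flat connections and complex Chern--Simons theory — indeed~\cite{KSV} exhibits phases that are \emph{not} classical $\SU(N)$-Chern--Simons values — and the whole point of the special levels $k_n$ must be that they suppress precisely these contributions; proving this is the central analytic obstacle, for which Andersen's formula~\cite{Agenerel} is the natural tool. Secondly, unlike the non-compact $\SL(N,\bbC)$ picture, where traces along $\phi^n(\gamma)$ can grow exponentially and so detect the stretch factor directly, the compact group $\SU(N)$ has bounded traces, so $\lambda$ must be read off the \emph{differential} of the character-variety dynamics; identifying the resulting Lyapunov data with rational multiples of $\log\lambda$, uniformly over $\mathrm{Fix}(\phi^n)$ and compatibly with the Maslov signs, appears to need Teichm\"uller-dynamics input not presently available. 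Already the genus-one case for $G=\SU(N)$, $N\ge3$, is open and would be a natural first test, using the $\SU(N)$ formulas of~\cite{Chaasymp} in place of Theorem~\ref{jeffreythm}.
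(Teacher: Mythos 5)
The statement you are attempting is Conjecture~\ref{stretchconjec}: the paper does not prove it, and offers no proof to compare yours against. The only evidence supplied there is the single verified instance, Proposition~\ref{sfconjectorus} (genus one, $G=\SU(2)$), together with the remark that the levels $k_n$ were chosen so that $(k_n+2)\CS(A)\in\bbZ$ for all flat connections on $T_{\phi^n}$. Your proposal correctly reverse-engineers that mechanism --- phase alignment via a common denominator of Chern--Simons values, followed by the near-cancellation $\tfrac12\bigl(\sqrt{\abs{\det(\Id-\phi^n)}}-\sqrt{\abs{\det(\Id+\phi^n)}}\bigr)\asymp\lambda^{-n/2}$ --- and it is a sensible program. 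But it is a program, not a proof, and you say so yourself.

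The concrete gaps are these. First, the Gutzwiller-type expansion you invoke is itself conjectural: it presupposes the AEC for $T_{\phi^n}$ together with the identification of the leading coefficients with Reidemeister torsion, neither of which is known for $g\ge2$ (and for $g\ge2$ the mapping torus is hyperbolic, where \cite{KSV} suggests the expansion may even fail in the conjectured form). Second, your choice of $r_n$ as a least common denominator of the Chern--Simons values requires those values to be rational, which is not established in general --- the paper explicitly flags this as an open hypothesis in the paragraph following the conjecture. Third, and most essentially, even granting the expansion and the phase alignment, the assertion that the \emph{signed} sum of torsion amplitudes over $\mathrm{Fix}(\phi^n\acts\calM)$ has exponential growth rate equal to a rational multiple of $\log\lambda$ is exactly the content of the conjecture restated in dynamical language; in the torus case it follows from an exact closed-form cancellation, and you give no mechanism forcing an analogous cancellation (or ruling out one that destroys the $\lambda$-dependence) when the fixed-point set is a positive-dimensional character variety with nontrivial Maslov signs. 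So the proposal reduces the conjecture to another open statement rather than proving it; as a discussion of why the conjecture is plausible and where the difficulty lies, it is accurate and consistent with the paper's own framing.
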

In Proposition~\ref{sfconjectorus}, the $k_n$ were chosen such that $(k_n+2)\CS(A) \in \bbZ$ for all flat connections $A$ on $T_{\phi^n}$, and if one knew that all values of the Chern--Simons functional on any given mapping torus were rational, this condition on the $k_n$ would be a natural addition to the above conjecture. More generally, one could assume e.g. that the $k_n$ were chosen to satisfy
\begin{align*}
	\abs{\exp(2\pi i (k_n+N)\CS(A)) - 1} < 1/n
\end{align*}
for all $A$.

\appendix
\section{Correspondence results}
\label{normaliseringer}
As described in Section~\ref{quantuminvariants}, understanding quantum invariants of the Dehn twist torus bundles boils down to understanding the quantum representations of $\MCG(S^1 \times S^1)$. In this appendix, we collect most of the notation used in the previous sections and show that the projective representations of $\MCG(S^1 \times S^1)$ from conformal field theory agree with those arising from Blanchet's modular category (for the $T$-matrices in the case of $G = \SU(N)$, but for the $S$-matrices only for $G = \SU(2)$). This equivalence follows abstractly from the isomorphism \cite{AU4} mentioned in the introduction, but we show here how to obtain it through direct calculation. Recall that $\MCG(S^1 \times S^1) = \SL(2,\bbZ)$ is generated by the matrices
\begin{align*}
	\begin{pmatrix} 0 & -1 \\ 1 & 0 \end{pmatrix}, \quad \begin{pmatrix} 1 & 1 \\ 0 & 1 \end{pmatrix}.
\end{align*}
We refer to the quantum representations of these elements as the \emph{$S$-matrix} and the \emph{$T$-matrix} respectively.

\subsection{The $T$-matrix}
Let as always $G =\SU(N)$ for some $N$. Let $\frakt$ be the Lie algebra of a maximal torus in $G$. Denote by $\langle \,,\, \rangle$ the basic inner product on $\frakt^*$ such that the highest (and therefore all) root, denoted $\alpha_m$, has length $\sqrt{2}$. Denote by $\Lambda^R$ the root lattice in $\frakt^*$ and by $\Lambda^w$ the lattice dual to $\Lambda^R$ under the basic inner product. We refer to $\Lambda^w$ as the weight lattice, silently identifying elements of $\frakt^*$ and $\frakt$ using the basic inner product. The weight lattice has a basis consisting of fundamental weights $\Lambda_i$, $i = 1, \dots, N-1$. Now, let $k \in \bbZ_{\geq 0}$ be a level, and let
\begin{align*}
	c = \frac{(\dim G)k}{k+N}
\end{align*}
be the \emph{level $k$ central charge}.

The relevant labelling set $P_k$ in conformal field theory is the set of highest weight integrable representations of the loop group $LG$, which in the above notation is given by
\begin{align*}
	P_k = \{ \lambda \in \Lambda^w \cap P_+ \mid \langle \lambda , \alpha_m \rangle \leq k \},
\end{align*}
where $P_+$ denotes the positive Weyl chamber. Now $\MCG(S^1 \times S^1)$ acts on the vector space spanned by these labels. The $T$-matrix at level $k$ is given by (see e.g. \cite{GW} or \cite{Kac}) the diagonal matrix
\begin{align}
  \label{tcft}
	T^\mathrm{CFT}_{\lambda,\lambda'} = \delta_{\lambda,\lambda'} \exp\left(\frac{\pi i}{r} \langle \lambda + \rho, \lambda + \rho \rangle - \frac{i\pi}{N}\langle \rho , \rho \rangle \right),
\end{align}
for $\lambda , \mu \in P_k$, where as always $r = k + N$. 
We will argue that this matrix differs from the matrix $T$ considered in Section~\ref{quantuminvariants} by the scalar factor $\exp(2\pi i c/24)$.

Now let $\lambda = (\lambda_1 \geq \cdots \geq \lambda_{N-1} \geq 0) \in \Gamma_{N,k}$ be a Young diagram with at most $k$ columns. Corresponding to this is the weight $\lambda = \sum_i (\lambda_{i} - \lambda_{i+1})\Lambda_i \in P_k$, letting $\lambda_N = 0$ (see Figure~\ref{youngweights} for the case $N = 3$, $k = 3$), giving a bijection $\Gamma_{N,k} \to P_k$. We prove the following seemingly well-known lemma, expressing the quadratic Casimir of a weight in terms of the length and contents of a Young diagram.

\begin{figure}[h]
  \centering
  \includegraphics{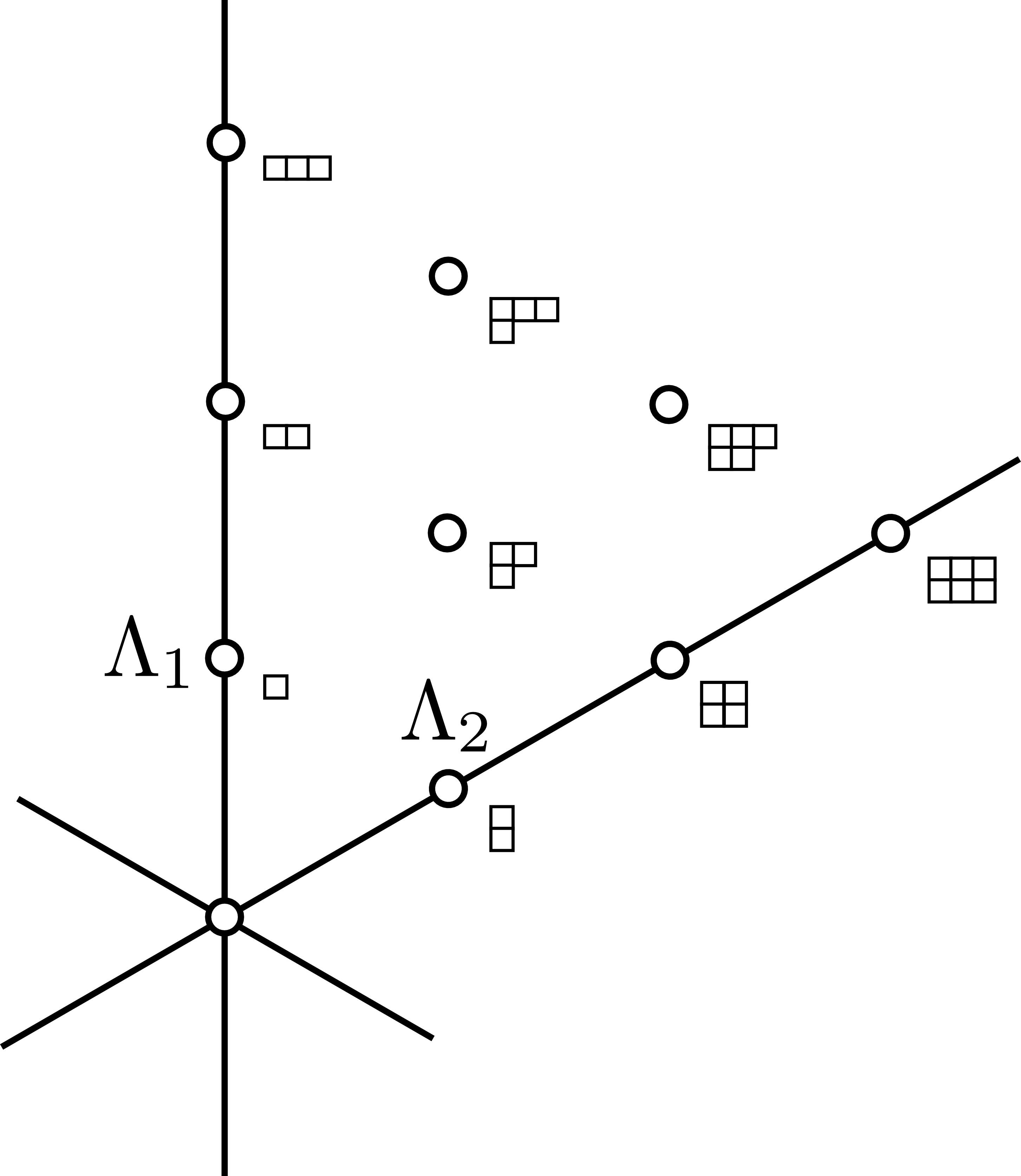}
  \caption{The correspondence between $P_3$ for $\mathfrak{sl}_3$ and $\Gamma_{3,3}$.}
  \label{youngweights}
\end{figure}
\begin{lem}
  \label{tlemma}
  Under the above correspondence between Young diagrams and weights,
  \begin{align*}
		\langle \lambda + \rho , \lambda + \rho \rangle - \frac{\dim \SU(N) \cdot N}{12} = -\frac{1}{N}\left( \lvert\lambda\rvert^2 - N^2 \lvert\lambda\rvert - 2N\sum_{(i,j) \in \lambda} \mathrm{cn}(i,j) \right).
  \end{align*}
\end{lem}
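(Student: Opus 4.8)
The plan is to translate both sides into the standard $\epsilon$-coordinate description of the weight lattice of $\fraksl_N$ and then reduce to an elementary identity about contents of a Young diagram. First I would identify the subtracted term on the left: by the Freudenthal--de Vries strange formula (or, to keep things self-contained, a direct parity-free evaluation) one has $\langle\rho,\rho\rangle = \tfrac{1}{12}N(N^2-1) = \tfrac{1}{12}\dim\SU(N)\cdot N$, the dual Coxeter number of $\fraksl_N$ being $N$. Since $\langle\lambda+\rho,\lambda+\rho\rangle - \langle\rho,\rho\rangle = \langle\lambda,\lambda+2\rho\rangle$, the lemma is equivalent to the statement that the quadratic Casimir eigenvalue $\langle\lambda,\lambda+2\rho\rangle$ equals $-\tfrac1N\bigl(\abs{\lambda}^2 - N^2\abs{\lambda} - 2N\sum_{(i,j)\in\lambda}\mathrm{cn}(i,j)\bigr)$.

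Next I would fix the orthonormal basis $\epsilon_1,\dots,\epsilon_N$ of $\bbR^N$ in which the roots of $\fraksl_N$ are the $\epsilon_i-\epsilon_j$, so that the basic inner product is the restriction of the Euclidean one, write $\mathbf{1} = \epsilon_1 + \dots + \epsilon_N$, and record $\Lambda_i = \epsilon_1 + \dots + \epsilon_i - \tfrac iN\mathbf{1}$ together with $\rho = \sum_i\rho_i\epsilon_i$, $\rho_i = \tfrac{N+1-2i}{2}$. A short Abel-summation computation then gives that the weight attached to the Young diagram, $\lambda = \sum_{i=1}^{N-1}(\lambda_i-\lambda_{i+1})\Lambda_i$ with $\lambda_N = 0$, equals $\bar\lambda - \tfrac{\abs{\lambda}}{N}\mathbf{1}$ where $\bar\lambda = \sum_i\lambda_i\epsilon_i$. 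Using $\langle\mathbf{1},\rho\rangle = 0$, $\langle\mathbf{1},\bar\lambda\rangle = \abs{\lambda}$, $\langle\mathbf{1},\mathbf{1}\rangle = N$, this yields the clean reduction
\begin{align*}
  \langle\lambda,\lambda+2\rho\rangle = \langle\bar\lambda,\bar\lambda+2\rho\rangle - \frac{\abs{\lambda}^2}{N}.
\end{align*}

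Finally I would expand in partition coordinates: $\langle\bar\lambda,\bar\lambda+2\rho\rangle = \sum_i\lambda_i(\lambda_i + N+1-2i) = \sum_i\lambda_i^2 + (N+1)\abs{\lambda} - 2\sum_i i\lambda_i$, and invoke the content identity obtained by summing $\sum_{j=1}^{\lambda_i}(j-i) = \tfrac12\lambda_i(\lambda_i+1) - i\lambda_i$ over rows,
\begin{align*}
  2\sum_{(i,j)\in\lambda}\mathrm{cn}(i,j) = \sum_i\lambda_i^2 + \abs{\lambda} - 2\sum_i i\lambda_i,
\end{align*}
to get $\langle\bar\lambda,\bar\lambda+2\rho\rangle = N\abs{\lambda} + 2\sum_{(i,j)\in\lambda}\mathrm{cn}(i,j)$. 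Substituting into the displayed reduction gives exactly $-\tfrac1N\bigl(\abs{\lambda}^2 - N^2\abs{\lambda} - 2N\sum_{(i,j)\in\lambda}\mathrm{cn}(i,j)\bigr)$, as desired.

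I do not expect a real obstacle; the whole thing is bookkeeping. The only place demanding care is keeping the projection onto the traceless subspace consistent throughout (the $-\tfrac iN\mathbf{1}$ in $\Lambda_i$ and the resulting $-\tfrac{\abs{\lambda}}{N}\mathbf{1}$ in $\lambda$) together with the normalisation of the basic inner product --- these are precisely what produce the factor $-1/N$ and the powers of $N$ in the final formula, so a sign or normalisation slip there is the realistic failure mode. It is also worth double-checking the $\langle\rho,\rho\rangle$ evaluation directly for both parities of $N$ so as not to rely on the strange formula.
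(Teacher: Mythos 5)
Your proof is correct, and I verified the key steps: the evaluation $\langle\rho,\rho\rangle = \tfrac{1}{12}N(N^2-1)$, the coordinate expression $\Lambda_i = \eps_1+\dots+\eps_i-\tfrac{i}{N}\mathbf{1}$ (which reproduces the inverse Cartan matrix entries $\min(i,j)-\tfrac{ij}{N}$), the telescoping that gives $\lambda = \bar\lambda - \tfrac{\abs{\lambda}}{N}\mathbf{1}$, the reduction $\langle\lambda,\lambda+2\rho\rangle = \langle\bar\lambda,\bar\lambda+2\rho\rangle-\tfrac{\abs{\lambda}^2}{N}$ using $\langle\mathbf{1},\rho\rangle=0$, and the content identity yielding $\langle\bar\lambda,\bar\lambda+2\rho\rangle = N\abs{\lambda}+2\sum_{(i,j)\in\lambda}\mathrm{cn}(i,j)$. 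The route, however, is genuinely different from the paper's. Both arguments begin identically, using the strange formula to reduce the claim to an evaluation of the Casimir $\langle\lambda,\lambda+2\rho\rangle$, but the paper then stays in the basis of fundamental weights, expands everything via $\langle\Lambda_i,\Lambda_j\rangle = \min(i,j)-\tfrac{ij}{N}$, and verifies the resulting polynomial identity in the $\lambda_i$ by a somewhat laborious double induction (checking the base case $\lambda=0$ and invariance under $\lambda\mapsto\lambda+\Lambda_l$, with a further induction on $l$ inside). Your passage to the orthonormal $\eps$-coordinates of $\bbR^N$ linearizes the bookkeeping so that the identity falls out in closed form with no induction at all; the price is only the need to justify that the basic inner product is the restricted Euclidean one (immediate, since the roots $\eps_i-\eps_j$ have squared length $2$) and to track the traceless projection, which you flag correctly. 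Your version is shorter and arguably more transparent about where the $-1/N$ and the powers of $N$ come from; the paper's version has the minor virtue of never leaving the weight-lattice data intrinsic to $\fraksl_N$.
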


Here $\abs{\lambda} = \sum_{i=1}^{N-1} \lambda_i$ denotes the number of cells in $\lambda$ and should not be confused with $\sqrt{\langle\lambda,\lambda\rangle}$.

\begin{proof}
  First of all, note that by Freudenthal's strange formula
  \begin{align}
    \label{freud1}
		\langle \lambda + \rho , \lambda + \rho \rangle - \frac{\dim \SU(N) \cdot N}{12} = \langle \lambda, \lambda \rangle + 2 \langle \lambda, \rho \rangle.
  \end{align}
  It is well-known (see e.g. \cite[Lemma~13.3.A]{Hum}) that $\rho = \sum_{j=1}^{N-1} \Lambda_j$, and so the right hand side of \eqref{freud1} becomes
  \begin{align*}
		\langle \lambda, \lambda \rangle + 2 \langle \lambda, \rho \rangle = \sum_{i,j=1}^{N-1} (\lambda_{i} - \lambda_{i+1})(\lambda_{j} - \lambda_{j+1}) \langle \Lambda_i , \Lambda_j \rangle + 2\sum_{i,j=1}^{N-1} (\lambda_{i} - \lambda_{i+1}) \langle \Lambda_i , \Lambda_j \rangle.
  \end{align*}
  Now $\langle \Lambda_i , \Lambda_j \rangle$ is simply the $(i,j)$'th entry of the inverse of the Cartan matrix of $\SU(N) = A_{N-1}$, and so can be written
  \begin{align*}
		\langle \Lambda_i , \Lambda_j \rangle = \min(i,j) - \frac{ij}{N}.
  \end{align*}
  Note also that
  \begin{align*}
	  \sum_{(i,j) \in \lambda} \mathrm{cn}(i,j) = \sum_{i=1}^{N-1}\frac{\lambda_i(\lambda_i+1)}{2} - i\lambda_i.
  \end{align*}
  Thus, expressed in terms of $\lambda_i$, the formula of the lemma is
  \begin{align*}
		\sum_{i,j=1}^{N-1} (\lambda_{i} - \lambda_{i+1})(\lambda_{j}-\lambda_{j+1} + 2)\left(N\min(i,j) - ij\right) = -\lvert\lambda\rvert^2 + N^2 \lvert\lambda\rvert + 2N\left(\sum_{i=1}^{N-1}\frac{\lambda_i(\lambda_i+1)}{2} - i\lambda_i\right).
  \end{align*}
  Let $\LHS(\lambda)$, $\RHS(\lambda)$ denote the left hand side and right hand side of this equation. It suffices to show that $\LHS(0) = \RHS(0)$, which is obvious, and that the difference of the two expression is invariant under $\lambda \to \lambda + \Lambda_l = \bar{\lambda}$ for all $1 \leq l \leq N -1$, viewing $\lambda$ as an element in the weight lattice. Under this transformation, the Young diagram becomes $\lambda_i \to \lambda_i + 1$ for $i \leq l$ and $\lambda_i \to \lambda_i$ for $i > l$. One easily finds that
  \begin{align*}
	  \RHS(\bar{\lambda}) - \RHS(\lambda) = -(l^2 + 2\abs{\lambda}l)+N^2l + 2N\sum_{i=1}^l (\lambda_i+1-i),
  \end{align*}
  and that
  \begin{align*}
		\LHS(\bar{\lambda}) - \LHS(\lambda) &= 2\sum_{j=1}^{l-1}(\eps_j+1)(N-l)j+2\sum_{j=l+1}^{N-1}(\eps_j+1)(N-j)l+(2\eps_l+3)(Nl-l^2) \\
		  &= 2\sum_{j=1}^l (\eps_j+1)(N-l)j + 2\sum_{j=l+1}^{N-1}(\eps_j+1)(N-j)l + Nl-l^2,
  \end{align*}
  where $\eps_j = \lambda_j - \lambda_{j+1}$. For the latter of the above, simply notice that the transformation is chosen such that $\eps_l \to \eps_l + 1$ and that the sum in $\LHS$ changes only when $i=l$ or $j=l$. Rewriting the expressions slightly, it now suffices to prove that
  \begin{align*}
		-2\abs{\lambda}l + N^2l &+ 2N\sum_{j=1}^l (\lambda_j-j) + Nl \\
		  &= 2\sum_{j=1}^l (\lambda_j - \lambda_{j+1}+1)(N-l)j + 2\sum_{j=l+1}^{N-1}(\lambda_j - \lambda_{j+1}+1)(N-j)l.
  \end{align*}
  The right hand side may be further rewritten as
  \begin{align*}
		2\sum_{j=1}^l \eps_j(N-l)j + 2\sum_{j=l+1}^{N-1}\eps_j(N-j)l + N(-l^2+Nl),
  \end{align*}
  and we therefore need to prove that
  \begin{align*}
		-2\abs{\lambda}l + 2N\sum_{j=1}^l (\lambda_j - j) + Nl = 2\sum_{j=1}^l (\lambda_j-\lambda_{j+1})(N-l)j + 2\sum_{j=l+1}^{N-1}(\lambda_j-\lambda_{j+1})l-Nl^2.
  \end{align*}
  For $l = 1$ this is easily checked and we proceed by induction on $l$, assuming that the equality holds true for some $l < N -1$. Under $l \to l+1$, the excess term on the left hand side is
  \begin{align*}
		-2\abs{\lambda} + 2N(\lambda_{l+1}-(l+1)) + N,
  \end{align*}
  and on the right hand side, it is
  \begin{align*}
		-2\sum_{j=1}^l\eps_j  j + 2\eps_{l+1}(N-(l+1)) + 2\sum_{j=l+2}^{N-1}\eps_j(N-j) - 2Nl - N.
  \end{align*}
  Finally, that these two excess terms agree follows from a calculation in terms of $\eps_j$, using that
  \begin{align*}
		\lambda_j = \sum_{m=j}^{N-1} \eps_m, \quad \abs{\lambda} = \sum_{j=1}^{N-1} j \eps_j.
  \end{align*}
  %In the case $N = 2$, both sides are equal to $\lambda_i(\lambda_i+2)$.
\end{proof}

From Lemma~\ref{tlemma} it follows that the $T$-matrix $T^\mathrm{CFT}$ defined in \eqref{tcft} agrees with the inverse of \eqref{tblanchet} up to a factor of $\exp(2\pi ic/24)$. More precisely, let
\begin{align*}
	f_r(\lambda) = -\frac{\pi i}{rN}\left( \lvert\lambda\rvert^2 - N^2 \lvert\lambda\rvert - 2N\sum_{(i,j) \in \lambda} \mathrm{cn}(i,j) \right),
\end{align*}
and note that
\begin{align*}
	\exp\left(\frac{\pi i}{r}\langle \lambda + \rho , \lambda + \rho \rangle\right) &= \exp\left(f_r(\lambda) + \frac{\pi i N \dim G}{12r}\right) \\
	 &= \exp\left(f_r(\lambda) - \frac{\pi i \dim G(r-N)}{12 r} + \pi i \frac{\dim G}{12}\right) \\
	 &= \exp\left(f_r(\lambda) - \frac{\pi i c}{12} + \frac{i \pi}{N}\langle \rho , \rho \rangle\right).
\end{align*}
Thus, combining \eqref{valgafa} with \eqref{tblanchet}, we obtain
\begin{align}
  \label{dehntwistcft}
	\rho_k(t_\mu)_{\lambda, \lambda'} = \exp(-f_r(\lambda)) = \exp(-2\pi i c/24) (T^\mathrm{CFT}_{\lambda,\lambda'})^{-1}.
\end{align}

\subsection{The $S$-matrix}
Extending further our analogies between the theories, we now consider the $S$-matrices. Let in the following $N = 2$. The following results could be extracted from the litterature, using that our skein relations agree with those of $U_q(\fraksl_2)$ (cf. e.g. \cite{Ohtbog}), but are included here for the sake of completeness.

In conformal field theory (or in the quantum group picture), the $S$-matrix is given by
\begin{align}
  \label{smatricen}
	S^\mathrm{CFT}_{jl} = \sqrt{\frac{2}{r}} \sin\left(\frac{\pi(j+1)(l+1)}{r}\right),
\end{align}
where $j,l = 0, \dots, k$. Let us find the corresponding matrix arising from Blanchet's modular category. Recall that we denote elements of $\Gamma_{2,k}$ simply by their number of cells. Note that by \cite[Prop.~2.6]{Bla}, the framed Homflypt polynomial of coloured framed links is invariant under choice of orientation of any of the components of the link, and as such we leave out the orientations in the pictures drawn below.
\begin{lem}
  \label{smatrix}
  Let $0 \leq j \leq k$, $0 \leq  l  \leq  k$. Then we have the following relation of links viewed as elements of the relative version (cf. \cite[p.~195]{Bla}) of $\calH(I \times I \times I)$:
  \\[0.1cm]
  \centerline{
    \includegraphics{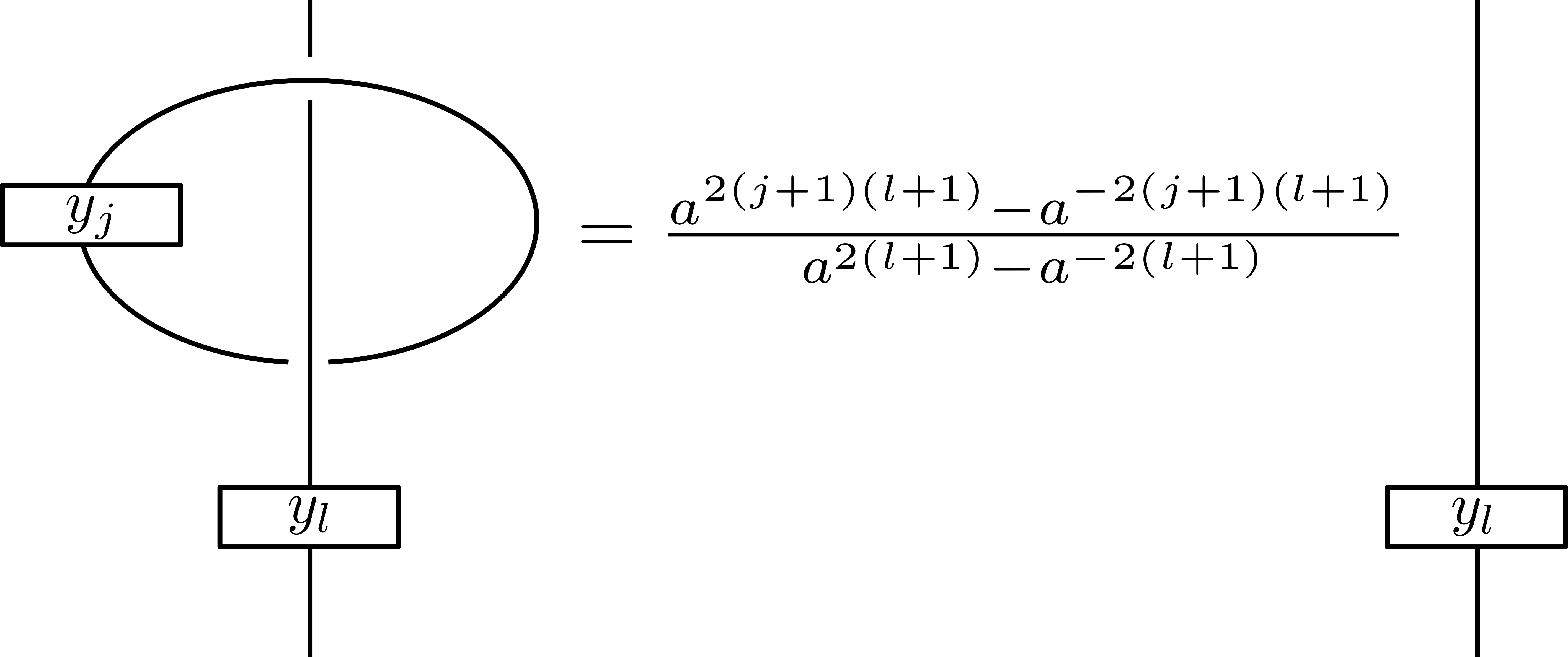}
  }
\end{lem}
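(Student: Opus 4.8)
The plan is to read the left-hand side as a \emph{curve operator}. The relative Homflypt skein module of a ball with a single $y_l$-coloured strand joining two boundary points is free of rank one over $K$, generated by the straight $y_l$-coloured arc (equivalently, $\End(y_l)\cong K$, as $y_l$ is a simple object of Blanchet's modular category), so the link on the left is automatically $\mu_j(l)\cdot(\text{straight }y_l\text{-arc})$ for some scalar $\mu_j(l)\in K$. The entire content of the lemma is then the identification of $\mu_j(l)$ with $\frac{z^{(j+1)(l+1)}-z^{-(j+1)(l+1)}}{z^{l+1}-z^{-(l+1)}}$, where $z=a^2$ is the root of unity fixed by \eqref{valgafa}.

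First I would reduce the $j$-dependence to the case $j=1$. Decorating the meridian by an arbitrary skein $x$ of the solid torus $S^1\times D^2$ and recording the resulting eigenvalue $\mu_x(l)$ defines a $K$-linear map; since two concentric meridians compose and concentric meridians realise the product of the annular skein algebra, $x\mapsto\mu_x(l)$ is in fact a $K$-algebra homomorphism on the subalgebra of $\calH(S^1\times D^2)$ generated by the relevant idempotents. Because $\SU(2)$ has a single fundamental weight, no hook-shaped partitions survive in Blanchet's category, so the classes $y_j$, $0\le j\le k$, lie in the subalgebra generated by the core circle $y_1$ and obey $y_0=\unit$ together with the Chebyshev recursion $y_1 y_j=y_{j+1}+y_{j-1}$ (this is just $\Sym^1\otimes\Sym^j\cong\Sym^{j+1}\oplus\Sym^{j-1}$). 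Hence $\mu_j(l)=U_j(\mu_1(l))$, where $U_0=1$, $U_1(X)=X$, and $U_{j+1}(X)=XU_j(X)-U_{j-1}(X)$.

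The one genuinely computational step is the base case: the eigenvalue $\mu_1(l)$ of a meridian coloured by the fundamental representation acting on the $y_l$-strand. I would obtain it by pushing the meridian through the $y_l$-cable, resolving the resulting clasp with the framed Homflypt relations of Figure~\ref{homflypt}, or -- more efficiently -- by combining the fusion $\Sym^1\otimes\Sym^l\cong\Sym^{l+1}\oplus\Sym^{l-1}$ with the ribbon balancing identity and the twist coefficients of \cite[Prop.~1.11~(b)]{Bla} (equivalently \eqref{tblanchet}); either route should give $\mu_1(l)=z^{l+1}+z^{-(l+1)}$ with $z=a^2$, noting that $z^{l+1}-z^{-(l+1)}$ is, up to the universal factor $z-z^{-1}$, the quantum dimension of $y_l$. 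This is the step where Blanchet's normalisation conventions and the concrete choice \eqref{valgafa} all have to be tracked, and it is the part I expect to be delicate -- getting the exponent $l+1$ right and ensuring no stray overall sign.

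It then remains to feed this into the recursion. Writing $w=z^{l+1}$, the claimed right-hand side $\frac{w^{j+1}-w^{-(j+1)}}{w-w^{-1}}=\sum_{i=0}^j w^{2i-j}$ satisfies $(w+w^{-1})\sum_{i=0}^j w^{2i-j}=\sum_{i=0}^{j+1}w^{2i-(j+1)}+\sum_{i=0}^{j-1}w^{2i-(j-1)}$ and equals $1$ for $j=0$ and $w+w^{-1}$ for $j=1$, so it coincides with $U_j(w+w^{-1})=U_j(\mu_1(l))=\mu_j(l)$; substituting back $w=z^{l+1}$ gives exactly the scalar in the statement. Besides the base case, the only other point requiring care is the justification that the $y_j$ genuinely lie in the polynomial subalgebra generated by $y_1$ -- i.e.\ that the Chebyshev recursion, rather than the full Pieri rule with hook-shaped partitions, governs the curve operators -- which is precisely where the hypothesis $G=\SU(2)$ (one fundamental weight) is used.
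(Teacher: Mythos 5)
Your proposal is correct and follows essentially the same route as the paper: there, too, the lemma is reduced to the Chebyshev recursion satisfied by the $y_j$ in the annulus (derived from Blanchet's Wenzl-style recursion for the quasi-idempotents $f_j$ rather than from the fusion rule $\Sym^1\otimes\Sym^j$) together with the $j=1$ base case, with the inductive step carried out exactly as in Lickorish's Lemma~14.2. The sign bookkeeping you flag as the delicate point is precisely what the paper emphasises: in Blanchet's Homflypt normalisation with the choice \eqref{valgafa}, the factor $(-1)^{j+l}$ of the Kauffman-bracket version disappears.
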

\begin{proof}
  The proof is a direct translation of the similar proof in the Kauffman skein module (see e.g. \cite[Lemma~14.2]{Lic}), where one obtains the same result up to a factor of $(-1)^{j+l}$. We emphasize that we do not use the Kauffman bracket typically considered when talking about $\SU(2)$.
  
  By definition (see \cite[p.~200]{Bla}), $y_j$ is the composition of a number of quasi-idempotents, most of which are the identity in the case $N = 2$. One finds that $y_j = f_j$, in the notation of \cite[p.~197]{Bla}. Recall that we write $[j] = (a^{2j}-a^{-2j})/(a^2-a^{-2})$. Using that
  \begin{align*}
		-[j-1] +[j](a^2+a^{-2}) = [j+1],
  \end{align*}
  the recursive relation given for $y_j = f_j$ can be rewritten to look as follows:
  \\[0.1cm]
  \centerline{
    \includegraphics{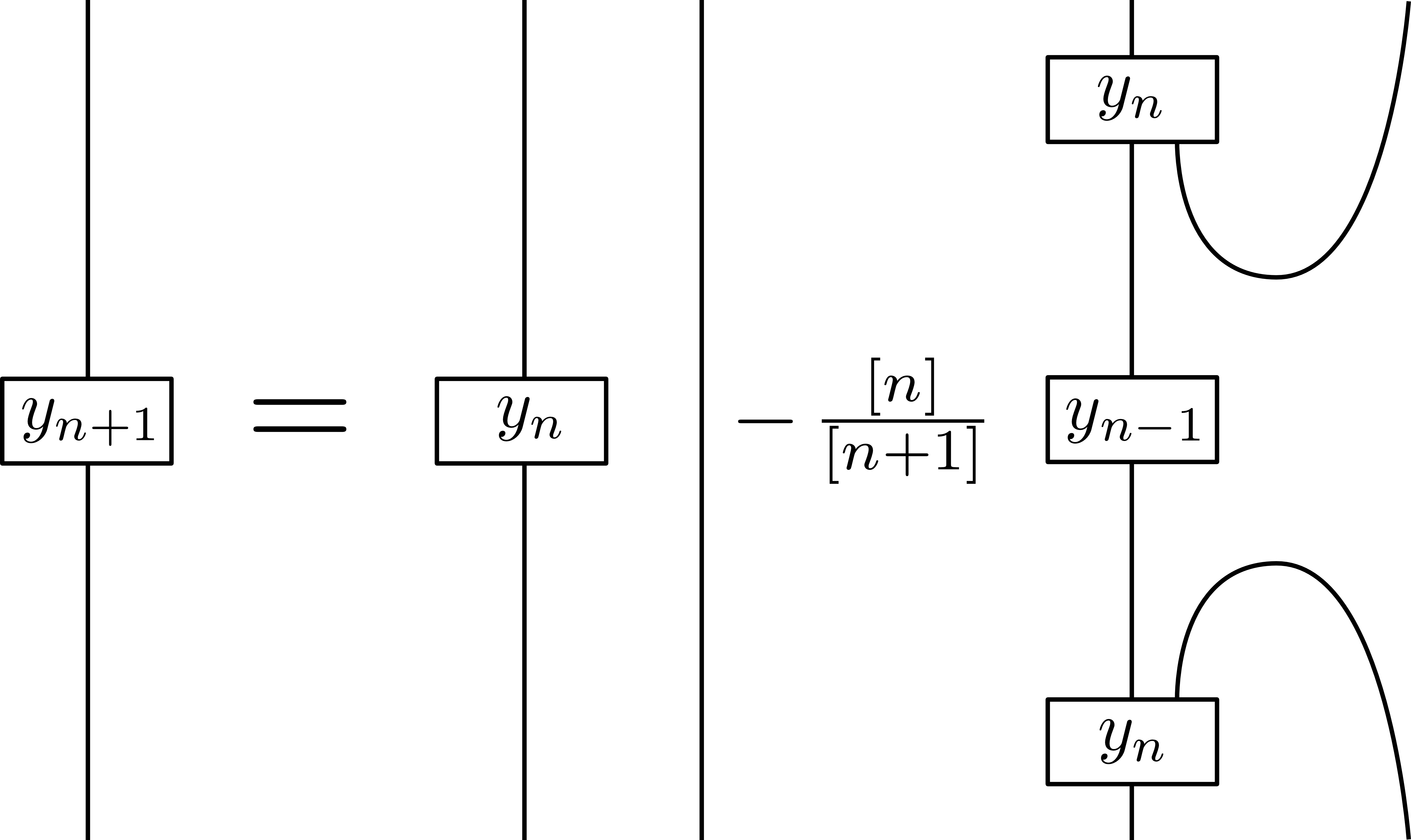}
  }
  
  Of course, this resembles closely Wenzl's recursive definition of the Jones--Wenzl idempotents. Now, a simple inductive argument using the quantum dimensions given in \cite[Prop.~1.14]{Bla}, proves the lemma in the case of $l = 0$. This can be used to give a Chebyshev polynomial style recursive relation for the idempotents traced in $\calH(S^1 \times I \times I)$ (which turns out to be exactly the same as the one for the Jones--Wenzl idempotents, but with all signs positive), which by the exact same argument as the one for \cite[Lemma~14.2]{Lic} can be used to prove the lemma.
\end{proof}
Following \cite{Tu}, let $s$ be the matrix whose $(j,l)$'th entry is the Homflypt polynomial evaluated on the Hopf link with components coloured $j$ and $l$. It follows immediately from Lemma~\ref{smatrix} that 
\begin{align*}
	s_{jl} = \frac{a^{2(j+1)(l+1)}-a^{-2(j+1)(l+1)}}{a^{2(l+1)}-a^{-2(l+1)}} [l+1] = \frac{a^{2(j+1)(l+1)}-a^{-2(j+1)(l+1)}}{a^2-a^{-2}}.
\end{align*}
Now, according to \cite{Tu}, the $S$-matrix is given by $\calD^{-1}s$, where $\calD$ is the \emph{rank} of the modular category, satisfying $\calD^2 = \sum_{j=0}^k s_{0j}^2$. We find that for $k \geq 1$,
\begin{align*}
	\calD^2 &= \sum_{j=0}^k [j+1]^2 = \frac{1}{(a^{-2}-a^{2})^2} \sum_{j=1}^{k+1} (a^{-4j}+a^{4j}-2) \\
	  &= \frac{1}{(2i\sin(\pi /r))^2}(-1-1-2(r-1)) = \frac{-2r}{-4\sin^2(\pi/r)} = \frac{r}{2} \frac{1}{\sin^2(\pi/r)}.
\end{align*}
A natural\footnote{Compare again this computation with the corresponding one in the theory coming from the Kauffman bracket. Here one obtains the same result but with the opposite sign, making the choice of square root seem somewhat more arbitrary.} choice of rank is thus
\begin{align*}
	\calD = \sqrt{\frac{r}{2}} \frac{1}{\sin(\pi /r)},
\end{align*}
and the $S$-matrix becomes
\begin{align*}
  S_{jl} = \calD^{-1}s_{jl} = \sqrt{\frac{2}{r}} \sin\left(\frac{\pi (j+1)(l+1)}{r}\right) = S^\mathrm{CFT}_{jl}
\end{align*}
\begin{rem}
  \label{framingrem}
  According to \cite{Tu}, changing the framing of a closed $3$-manifold will change its level $k$ quantum $\SU(2)$-invariant by a power of $\calD^{-1}\Delta$, where
  \begin{align*}
		\Delta = \sum_{j=0}^k T_{jj}^{-1} [j+1]^2.
  \end{align*}
  Arguing exactly as in the proof of Theorem~\ref{stnmedlinks}, we find
  \begin{align*}
		\Delta &= \frac{a}{(a^2-a^{-2})^2} \sum_{j=1}^{k-1} a^{-j^2-4j}+a^{-j^2+4j}-2a^{-j^2} \\
		  &= \frac{a}{(a^2-a^{-2})^2}(\sqrt{2r} \exp(\pi i(2r-16)/(8r)) - 2\sqrt{r/2} \exp(\pi i/4)) \\
		  &= - \frac{\exp(\pi i/(2r))}{\sin^2(\pi /r)} \sqrt{2r} \exp(\pi i/4) \left(\exp(-2 \pi i/r)-1\right).
  \end{align*}
  It follows that
  \begin{align*}
		\calD^{-1} \Delta &= -\frac{1}{2}\frac{\exp(-\pi i/(2r))}{\sin(\pi /r)} \exp(\pi i/4)(\exp(-2\pi i /r)-1)\\
		 &=\exp\left(-3 \frac{\pi i}{2r}\right) \exp(3 \pi i/4) = \left( \exp\left(\frac{2 \pi i c}{24}\right) \right)^3.
  \end{align*}
\end{rem}
\begin{rem}
  We have made explicit the difference between the specialization \eqref{valgafa} and the corresponding ones in the Kauffman bracket theory to make it clear how one needs to be a little cautious in comparing the theories and using known results from skein theory. Indeed one also obtains a modular category from the Kauffman bracket \cite[Ch.~XII]{Tu} which corresponds closely to the one we have discussed here. Turaev obtains the $S$-matrix \eqref{smatricen} through the choice $a = \pm i \exp(\pi i/(2r))$, killing off all signs, but in this specialization, the twist coefficients of the theories will not agree.
\end{rem}
\begin{rem}
  It would be nice to generalize Lemma~\ref{smatrix} and see that the $S$-matrices of the various theories agree in the general case $G = \SU(N)$, but as we do not use it in the main body of the paper anyway, we leave it at this and refer to \cite{LM} for a description of the Homflypt polynomial of coloured Hopf links.
\end{rem}

\section{Plots}
\label{plots}
As a picture is worth a thousand words, we collect here plots of some of the invariants that are considered in the paper. Firstly, in Figures~\ref{sporplot}--\ref{torus5} we consider the values of $Z_k^{\SU(2)}$ of the bare manifolds $M^b$.

\begin{figure}[h]
  \centering
  \includegraphics[scale=0.5]{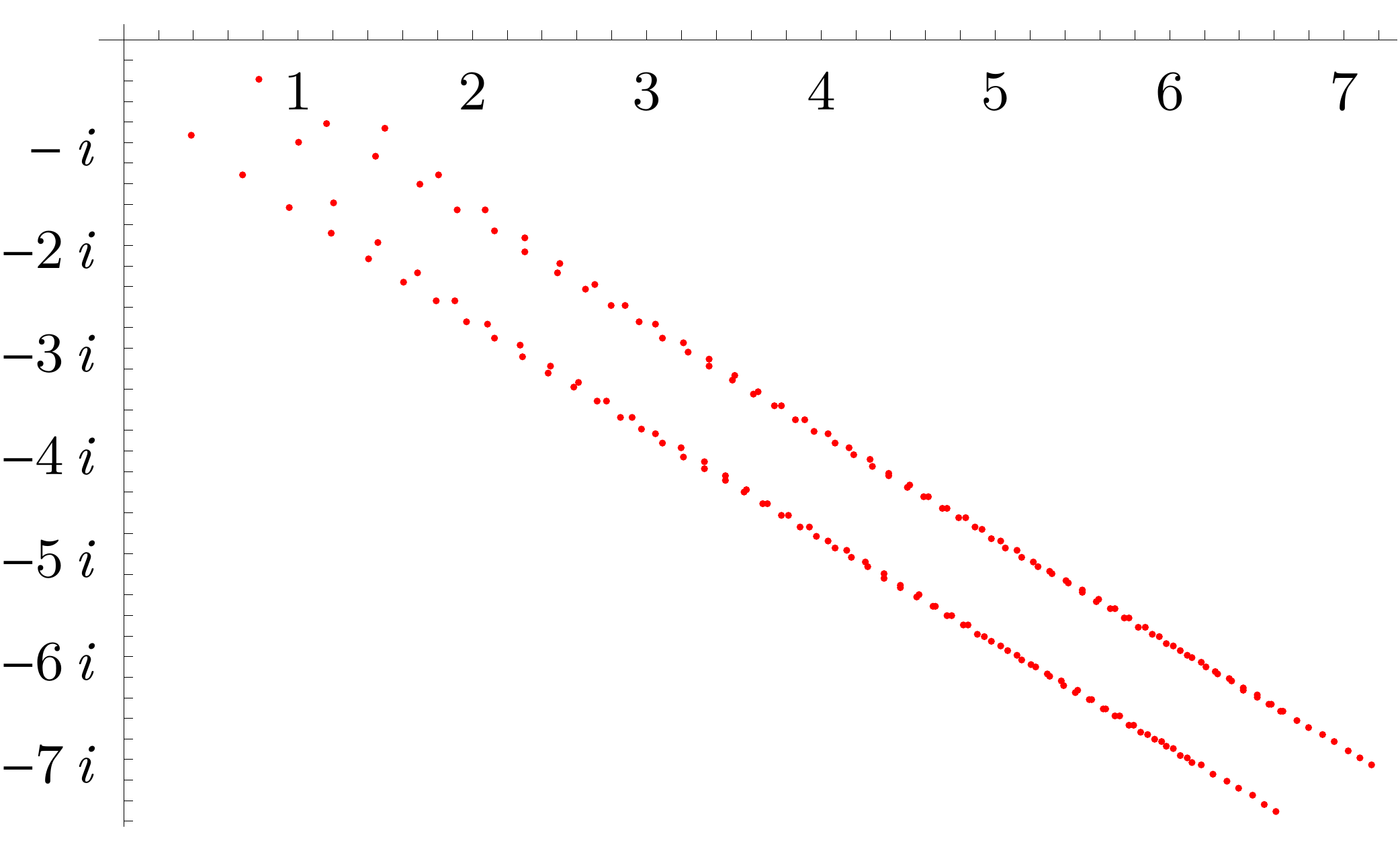}
  \caption{The values of $Z_k^{\SU(2)}(M^1)$ in the complex plane for $k = 0, \dots, 200$.}
  \label{sporplot}
\end{figure}
\begin{figure}[h]
  \centering
  \begin{minipage}[c]{0.47\textwidth}
    \centering
    \includegraphics[scale=0.3]{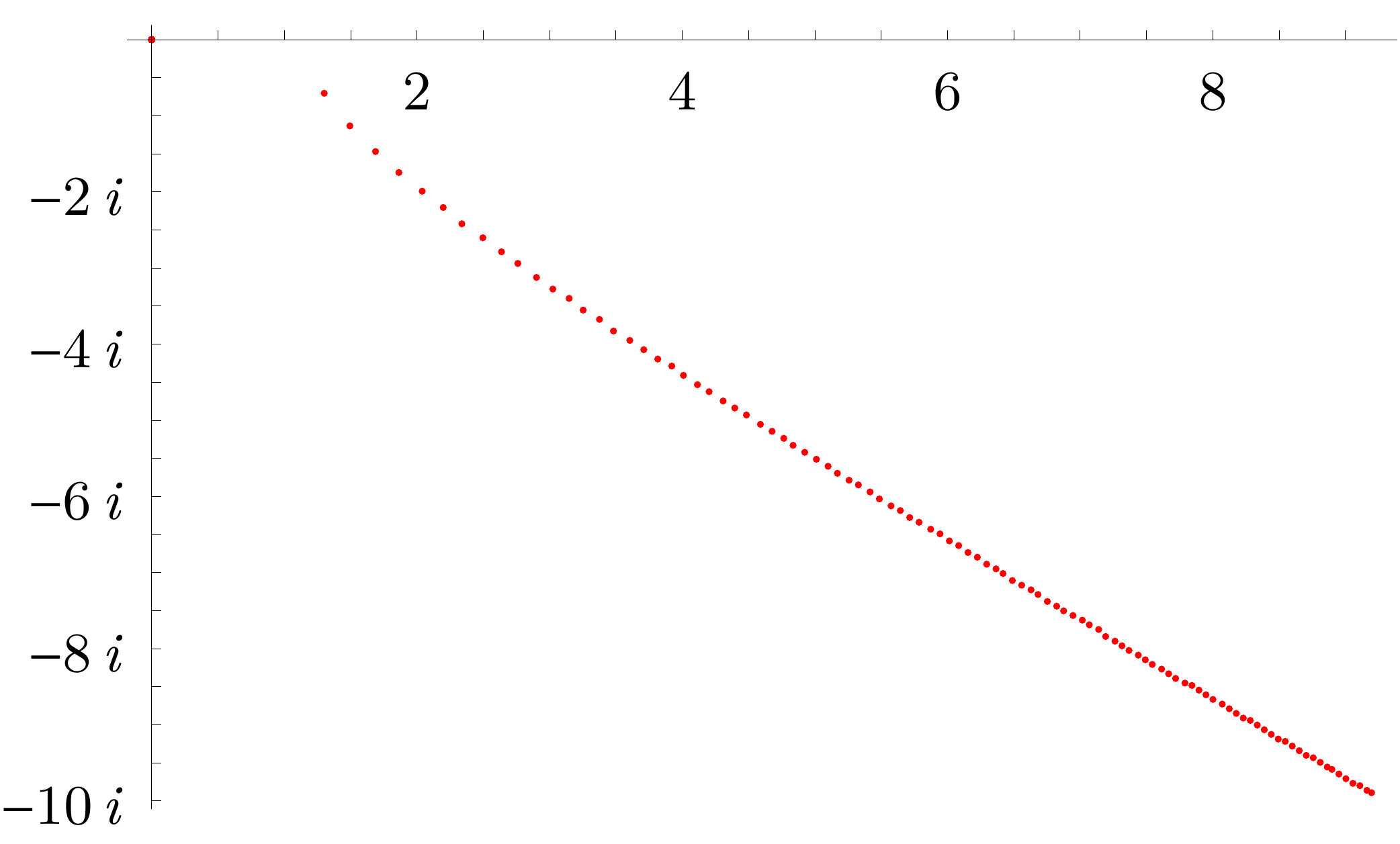}
    \caption{$Z_k^{\SU(2)}(M^2)$.}
    \label{torus2}
  \end{minipage}
  \hfill
  \begin{minipage}[c]{0.47\textwidth}
    \centering
    \includegraphics[scale=0.3]{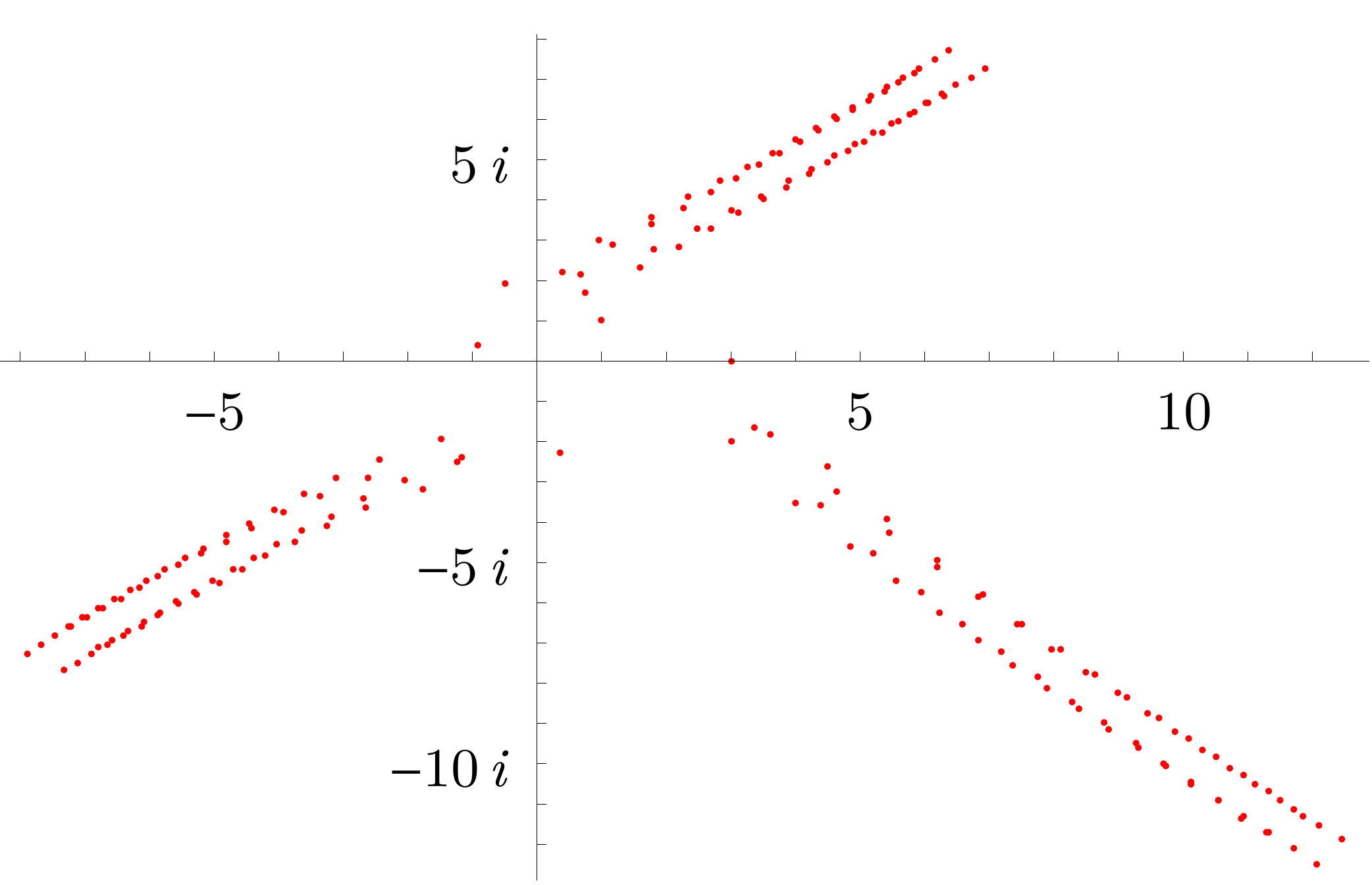}
    \caption{$Z_k^{\SU(2)}(M^3)$.}
    \label{torus3}
  \end{minipage}
  \hfill
  \begin{minipage}[c]{0.47\textwidth}
    \centering
    \includegraphics[scale=0.3]{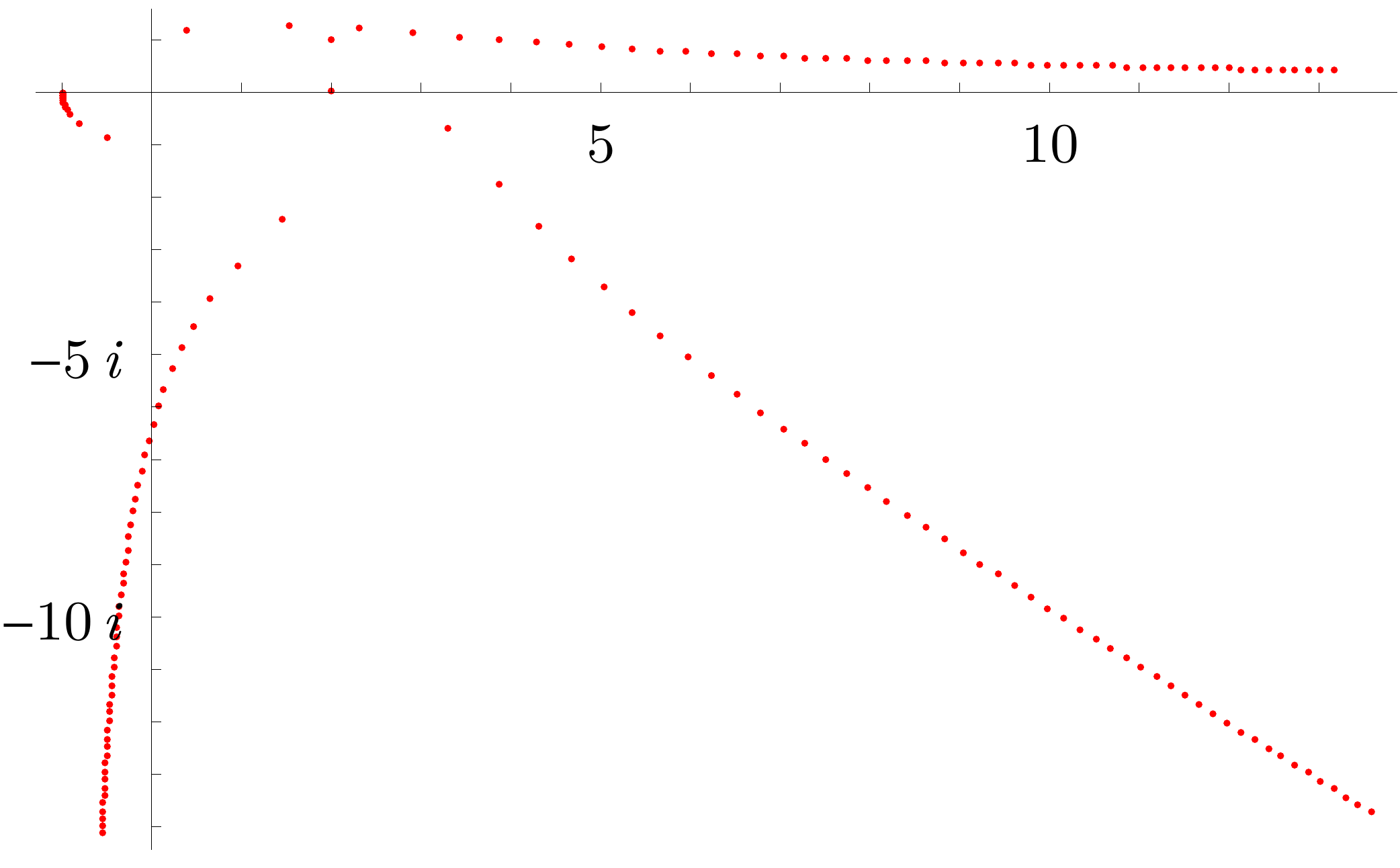}
    \caption{$Z_k^{\SU(2)}(M^4)$.}
    \label{torus4}
  \end{minipage}
  \hfill
  \begin{minipage}[c]{0.47\textwidth}
    \centering
    \includegraphics[scale=0.3]{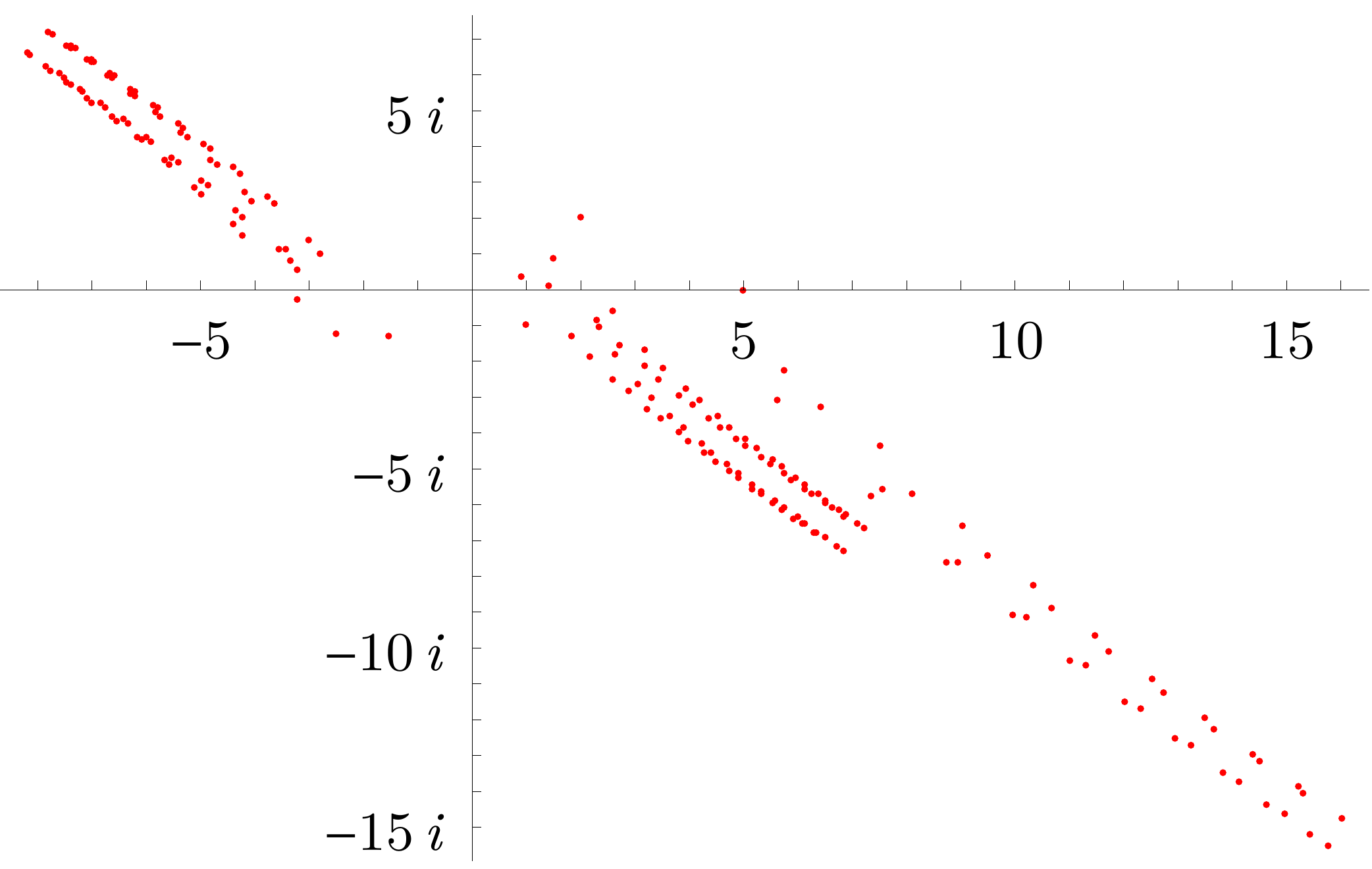}
    \caption{$Z_k^{\SU(2)}(M^5)$.}
    \label{torus5}
  \end{minipage}
\end{figure}

A particularly interesting case of Theorem~\ref{stnmedlinks} arises when the link colour is correlated with the level. An animation of the resulting behaviour -- which we hope to discuss in future work -- is available at \url{http://maths.fuglede.dk/WRTDehnTwist}.

Finally, in Figure~\ref{SU3plot} and Figure~\ref{SU4plot} we include plots of values of $Z_k^{\SU(N)}(M^1)$ for $N = 3,4$ and $k = 0, \dots, 100$.

\begin{figure}[h]
  \centering
  \begin{minipage}[c]{0.47\textwidth}
    \centering
    \includegraphics[scale=0.3]{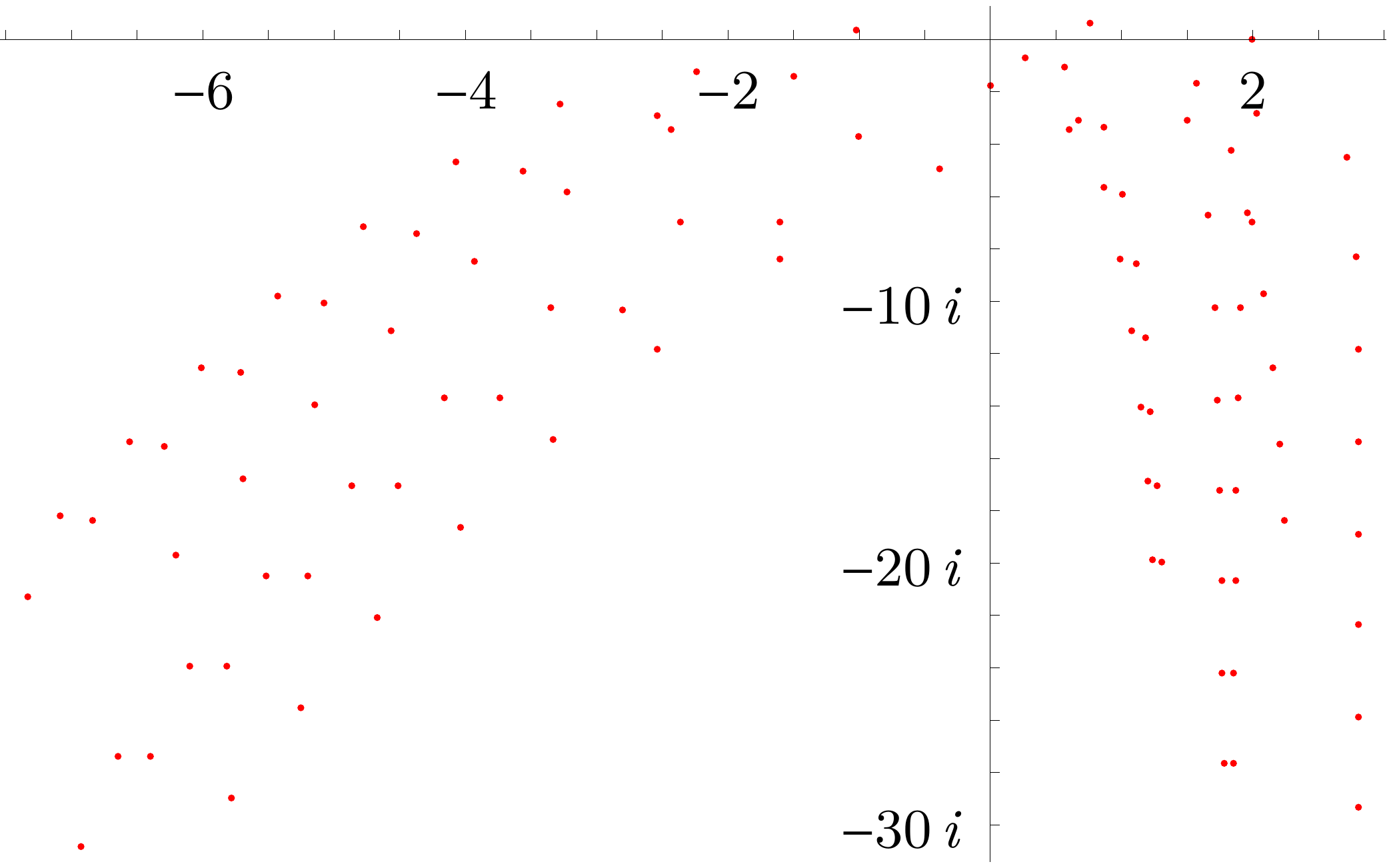}
    \caption{$Z_k^{\SU(3)}(M^1)$.}
    \label{SU3plot}
  \end{minipage}
  \hfill
  \begin{minipage}[c]{0.47\textwidth}
    \centering
    \includegraphics[scale=0.3]{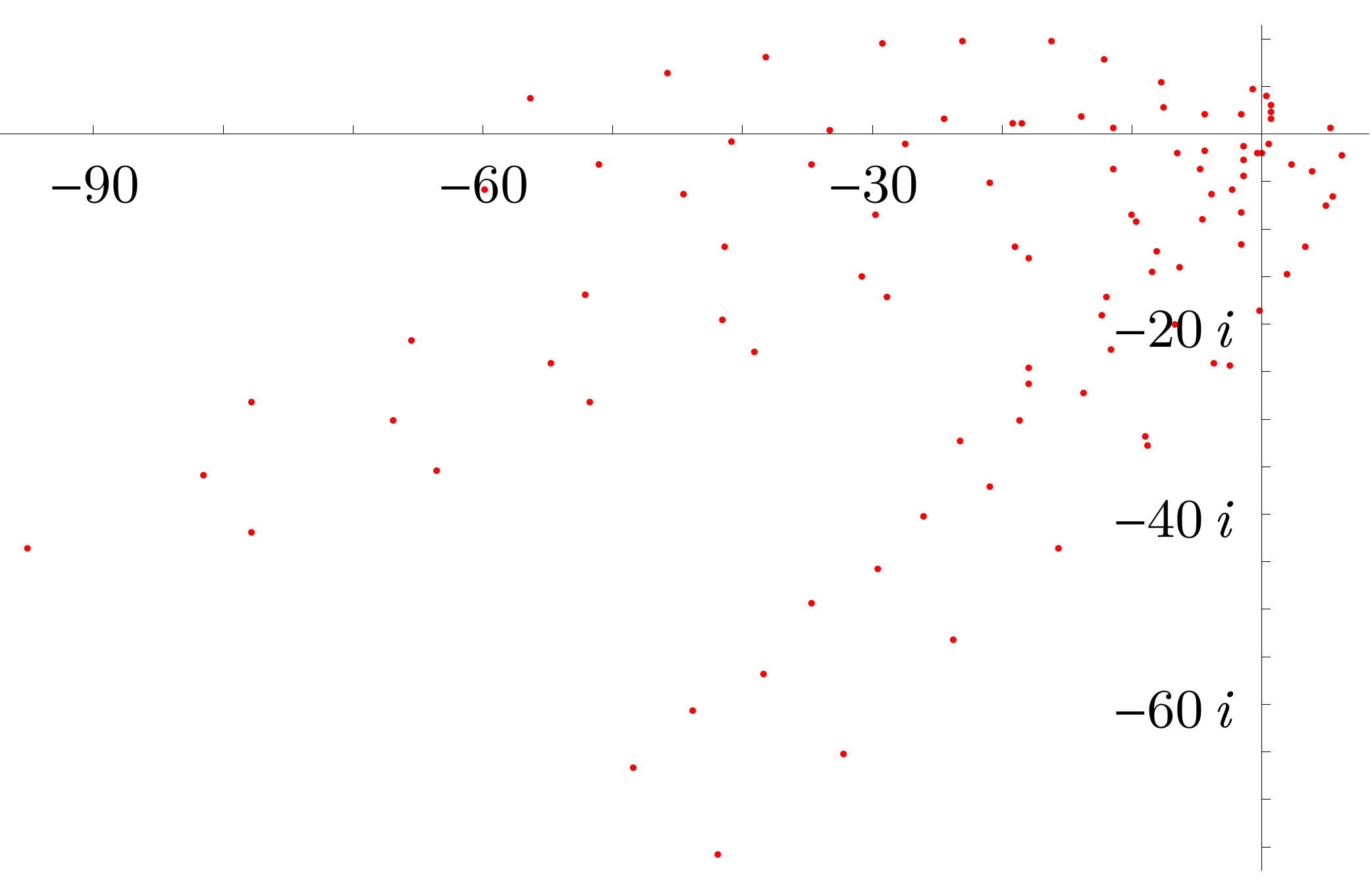}
    \caption{$Z_k^{\SU(4)}(M^1)$.}
    \label{SU4plot}
  \end{minipage}
\end{figure}

\bibliographystyle{is-alpha}
\bibliography{references}
\end{document}